\pgfplotsset{compat=1.16}
\DeclareMathOperator*{\argmax}{arg\,max} 
\newtheorem{thm}{Theorem}[section]
\newtheorem{prop}[thm]{Proposition}
\newtheorem{lem}[thm]{Lemma}
\theoremstyle{definition}
\newtheorem{defn}[thm]{Definition}
\newtheorem{cnd}[thm]{Condition}
\newtheorem{rem}[thm]{Remark}
\newtheorem{example}[thm]{Example}
\newcommand{\thmref}[1]{Theorem~{\rm \ref{#1}}}
\newcommand{\lemref}[1]{Lemma~{\rm \ref{#1}}}
\newcommand{\cndref}[1]{Condition~{\rm \ref{#1}}}
\newcommand{\propref}[1]{Proposition~{\rm \ref{#1}}}
\newcommand{\defref}[1]{Definition~{\rm \ref{#1}}}
\newcommand{\remref}[1]{Remark~{\rm \ref{#1}}}
\makeatletter \@addtoreset{equation}{section}
\def\R{\ensuremath {\mathbb R}}
\newcommand{\EE}{\mathbb E}
\newcommand{\NN}{\mathbb N}
\newcommand{\RR}{\mathbb R}
\def\P{\ensuremath{\mathbb P}}
\newcommand{\F}{\mathcal F}
\newcommand{\I}{{\mathcal I}}
\newcommand{\A}{{\mathcal A}}
\newcommand{\AF}{\mathcal A_{\mathrm F}}
\newcommand{\AI}{\mathcal A_{\mathrm I}}
\newcommand{\cR}{\mathcal R}
\newcommand{\lakj}{\lambda_{k_{j}}}
\newcommand{\e}{\varepsilon}
\newcommand{\vphi}{\varphi}
\newcommand{\id}{\mathtt{id}}
 \newcommand{\la}{\lambda} \newcommand{\La}{\mathfrak P}
\newcommand{\wdh}{\widehat}
\newcommand{\wdt}{\widetilde}
\newcommand{\lan}{\langle} \newcommand{\ran}{\rangle}
\renewcommand{\hat}{\widehat}
\newcommand{\E}{{\cal E}}
\newcommand{\comment}[1]{} 
\title{Long-Term Average Impulse Control with Mean Field Interactions\thanks{This research was supported by the Simons Foundation under grant number 8035009.}}
\author[1]{K.L. Helmes}
\author[2]{R.H. Stockbridge}
\author[2]{C. Zhu}
\affil[1]{\small Institute for Operations Research, Humboldt University of Berlin, Spandauer Str. 1, 10178, Berlin, Germany, {\tt helmes@wiwi.hu-berlin.de}}
\affil[2]{Department of Mathematical Sciences,   University of Wisconsin-Milwaukee,   Milwaukee, WI 53201,   USA,   {\tt stockbri@uwm.edu}, {\tt zhu@uwm.edu}}
\begin{document}
\maketitle 

\begin{abstract}
This paper analyzes and explicitly solves a class of long-term average impulse control problems with a specific mean-field interaction.  The underlying process is a general one-dimensional diffusion with appropriate boundary behavior.   The model is motivated by applications such as  the optimal long-term management of renewable resources and financial portfolio management.  Each individual agent seeks to maximize her  long-term average reward, which consists of a running reward and income from discrete impulses, where the  unit intervention price depends on the market through a stationary supply rate, the specific mean field variable to be considered. In a competitive market setting, we establish the existence of and explicitly characterize an equilibrium strategy within a large class of policies under mild conditions.  Additionally, we formulate and solve the mean field control problem, in which   agents cooperate with each other, aiming to realize a common maximal long-term average profit.  To illustrate the theoretical results, we examine a stochastic  logistic growth model and a population growth model  in a stochastic environment  with impulse control.
 
 {\bf Keywords:}   Mean field game; mean field control; impulse control;  long-term average reward; equilibrium strategy; renewal theory; stochastic logistic growth models.

{\bf AMS 2020 subject classifications:} 91A16, 91A15, 93E20,  60H30, 60J60, 91G80
\end{abstract}
 
\section{Introduction} \label{sect-intro}
 
This paper considers and explicitly solves a long-term average stochastic impulse control problem with a particular type of mean-field interaction.  Our motivation stems from two sources.  The first is the applications in natural resource management, specifically in the context of optimal and sustainable harvesting strategies.  The second is mathematical in nature.  It concerns 
   (a) the important but subtle interplay between two revenue streams, the incomes from a running reward  and from impulse decisions, 
  and (b) exploring a direct approach, using renewal theory and the renewal reward theorem, to analyze such impulse control problems  with a special mean field interaction that will be described in detail momentarily. This approach differs from the general and well established principle: ``Set up the proper HJB/QVI of the model, couple it with the corresponding Fokker-Planck equation, and apply a fixed point argument.''

Let us now introduce the problem. In the absence of controls, the dynamics of a one-dimensional state process -- which may describe the evolution of some renewable resource  -- is modeled by  a one-dimensional diffusion process  on an interval $\I \subset \RR$  
\begin{equation} \label{e:X0}
d X_{0}(t) = \mu(X_{0}(t)) dt +\sigma(X_{0}(t)) dW(t), \ \ X_{0}(0) = x_{0},
\end{equation}  
where $x_{0}\in \I$ is an arbitrary but fixed point throughout the paper,  $W$ is a one-dimensional standard Brownian motion, and the drift and diffusion are given by the functions $\mu$ and $\sigma$, respectively.  The diffusion process is assumed to have certain boundary behavior; 
see Condition \ref{diff-cnd} for details.

Furthermore, an  individual agent wants to specify when and by how much the state of the process should be reduced to achieve economic benefits.  Her strategy is modeled by an impulse control $R : =\{(\tau_{k}, Y_{k}), k =1,2,\dots\}$ such that for each $k\in \NN$,  $\tau_{k}$ is the time of the $k$th intervention and $Y_{k}$ is the size of the intervention.  The resulting  controlled process $X^{R}$ satisfies  
\begin{equation} \label{e:X} 
\begin{aligned}
X^{R}(t) & =x_{0} + \int_{0}^{t} \mu(X^{R}(s))ds + \int_{0}^{t} \sigma(X^{R}(s))dW(s) 
 - \sum_{k=1}^{\infty}I_{\{\tau_{k} \le t\}} Y_{k},  \quad t\ge 0.
\end{aligned}\end{equation} 
A fundamental quantity associated with each policy $ R$ is the long-term average supply rate of product to the market given by 
\begin{align}
\label{e:kappaQ}
 \kappa^{R} :&  =   \limsup_{t\to\infty}   \frac1t  \EE\Bigg[  \sum_{k=1}^{\infty}I_{\{\tau_{k} \le t\}}  Y_k\Bigg]  =  \limsup_{t\to\infty}   \frac1t \EE\Bigg[  \sum_{k=1}^{\infty}I_{\{\tau_{k} \le t\}} (X^{R}(\tau_{k}-) - X^{R}(\tau_{k}))\Bigg]. 
\end{align}

Regarding the market structure, we assume that the market's supply side comprises a continuum of agents, each with the same state dynamics (in the absence of control) and reward structure as the individual agent under consideration.
An individual agent's reward depends not only on her own impulse strategy $R$, but also  crucially  on the market's long-term average supply rate $\kappa^Q$, which results when all other agents adopt policy $Q$.  The supply rate $\kappa^Q$ is the key mean-field interaction that determines the market price of the product of interest through a continuous function $\vphi$. 

Given a positive fixed cost $K $ for each intervention,  a running reward  function $c$, a product supply rate to the market $z : = \kappa^Q$, and a price function $\varphi$, the  reward functional for  an individual agent who adopts policy $R$ is her expected long-term average profit:
\begin{equation}\label{e:reward-fn} \begin{aligned}
J(R; z) : &= \liminf_{t\to\infty}  t^{-1} \EE \bigg[\int_{0}^{t} c(X^{R}(s)) ds \\&  \hspace{1in}+  \sum_{k=1}^{\infty} I_{\{\tau_{k} \le t\}} (\varphi( z) (X^{R}(\tau_{k}-)-X^{R}(\tau_{k})) - K)\bigg].
\end{aligned}
\end{equation}   
The presence of the positive fixed cost $K$ for each intervention implies that $\lim_{k \to \infty} \tau_k = \infty$ a.s.; otherwise $J(R;z) = -\infty$, so $R$ will not be optimal.

 The expected long-term average revenue has two components consisting of a running reward and the reward obtained from the impulse control.
The function $c$ quantifies the running rewards based on the values of the controlled process. In  the context of  harvesting problems, the function $c$ can represent the utility derived from maintaining desirable state values $X^{R}(s)$  at time $s$, as well as the state's contribution to the overall ecosystem's stability.
 For example, the function $c$ can be used to model a subsidy or a stream of carbon credits for managing large tracts of forest. 

The reward from each control action is the net price $\vphi(z)$ times the difference
in states at the time of intervention minus the fixed cost $K$. 
In general, the impulse cost for production-type problems has a fixed component and a variable cost. For the models considered in this paper, the variable cost is
proportional to the size of the intervention and the net price  $\vphi(z)$ subsumes this proportional cost.
  The fixed cost for an intervention  in \eqref{e:reward-fn} makes the problem one of impulse control. 
 The optimal policy thus involves discrete interventions rather than continuous adjustments, ensuring effective product management while maximizing  the overall profit rate.

A fundamental assumption on the model is that each price $\vphi(z)$ in the range of supply rates $z$ is large enough so that some active policy yields a better long-term average reward than the ``do-nothing'' policy that never intervenes.  Such prices are called {\em feasible.}\/  The collection of feasible prices will be denoted by $ \La$, for which a functional representation will be given in \eqref{e:set-Lambda-defn}.

Due to the presence of the nonnegative running reward rate $c$, the interplay between $c$ and the production rate $\mu$ is one of the essential and important   features of
the model and requires careful analysis.  Although the case of a negative function $c$ is also relevant in applications such as inventory control and industrial animal husbandry, a negative running or holding cost term in fact simplifies both the analysis and the characterization of optimal controls near the right boundary of the state space. Specifically, a negative $c$ prizes interventions that keep the controlled process away from the right boundary, thereby avoiding challenges  associated with boundary behavior of the underlying diffusion. By contrast, a nonnegative running reward function $c$ may encourage the process to approach the right boundary, necessitating a more careful examination of several elementary results. 

 Two problems will be investigated  in this paper. First, we consider a competitive market where the agents compete with each other. Our goal is to establish the existence of an equilibrium strategy under the long-term average criterion in a large class of admissible policies. In other words, we wish to determine whether there exists an  admissible strategy $Q $ so that for all admissible $R$ in the class of policies
\begin{equation}\label{e:sec2game}
J(R; \kappa^Q) \le J(Q; \kappa^Q).
\end{equation} 
This is a mean field game (MFG) problem with impulse control.  It implies that, given the stationary supply rate   $\kappa^{Q} $ of an  equilibrium policy $Q$, an   individual agent has no incentive to deviate from the policy $Q$.  

The second problem addressed in this paper is a mean field control (MFC) problem, in which the agents in the market cooperate with each other, aiming to achieve a common maximum long-term average reward.  In other words, the goal is to find an admissible strategy $Q^{\star} $
 so that for all admissible $R$  
\begin{equation}\label{e:sec2control}
 J(R; \kappa^{R}) \le J(Q^{\star}; \kappa^{Q^{\star}}).
\end{equation} 
 Note that the formulation \eqref{e:sec2control} captures the fact that cooperation among all agents in the market results in a single stationary supply rate.   Hence we can regard \eqref{e:sec2control} as a {\em central planner optimization problem.} Mathematically, the reward functional $J(R; \kappa^{R})$ of \eqref{e:reward-fn} depends on the long-term average supply rate of the policy $R$ which in turn depends on the distribution of the controlled process $X^R$. In light of the discussion in Chapter 6 of \cite{CarmoD-18I},   \eqref{e:sec2control} is therefore a mean field control   problem. 
 
\thmref{thm-main-MFGC} provides a set of conditions under which both MFG and MFC are solvable within the broad class of admissible policies.  Importantly, it is the same set of sufficient conditions for both problems.

 It is worth noting that the mean field game  and mean field control  problems are closely related but have different objectives. 
In MFG, the objective is to achieve an equilibrium where no agent can improve their reward by unilaterally changing their strategy. In contrast, MFC focuses on maximizing the collective reward of all agents under a centralized policy. Remark \ref{rem-MFG-MFC} further elaborates on this distinction.

{  In \eqref{e:sec2game} and \eqref{e:sec2control}, the reward functional $J$ depends on the long-term average supply rate $\kappa^Q$ or $\kappa^R$.  One may consider alternative formulations of the MFG and MFC problems, in which the reward functional depends on the  supply rate up to the present time; c.f. \eqref{e:reward-hatJ}.  However,  Remark \ref{Rem:t-formulation} explains that such formulations are essentially equivalent to the ones given in \eqref{e:sec2game} and \eqref{e:sec2control} under the long-term average criterion.}

When solving the MFG problem, a key step in the analysis is to fix a value of $z$ (and hence a price $p = \varphi(z)$) and study the corresponding classical long-term average optimal impulse control problem.  This is fully solved in the paper \cite{HelmSZ:26} using a renewal argument to explicitly represent the long-term average reward for the class of thresholds policies as a nonlinear function; see \defref{thresholds-policy} for the class of policies and \eqref{e:F_K} for the function. For convenience of presentation, we recall some key results in Proposition \ref{prop-Fmax}, which establishes the optimality of a thresholds policy under certain conditions.  The solution technique relies on the boundary structure and the speed measure and scale function corresponding to a one-dimensional diffusion.

Turning to the MFG  problem, {  a challenge in leveraging the optimality results for the classical impulse control
problem is that the price $ p = \vphi(\kappa^{Q})$ depends on the long-term average product supply rate $\kappa^{Q}$ to the market. Thus the analysis must allow the price to vary as $\kappa^{Q}$ varies.  The central idea is that for each long-term average supply rate $z$, we solve the corresponding classical impulse control problem and determine the resulting long-term average supply rate. This leads to a continuous function of the supply rate, for which we establish the existence of a fixed point. The fixed point,   in turn,  yields a  thresholds-type  equilibrium impulse policy within the class of admissible policies in Theorem  \ref{prop-game-equi}.}

 For the MFC problem \eqref{e:sec2control}, we derive the key fixed-point-type identity \eqref{e:z=Bid/Bxi}, which facilitates the establishment of an upper bound for the functional $J(R, \kappa^{R})$ for {\em all} admissible impulse policies $R$.   This identity depends on subtle asymptotic analysis of optimal thresholds for a family of auxiliary classical impulse control problems.  Next  it is shown  that a specific  thresholds policy achieves this upper bound, and thus is a mean-field optimal impulse policy; see Theorem \ref{thm-mean-field-control} for details. 
 
 We note that this approach differs from the probabilistic and analytic methods presented  in \cite{CarmoD-18I} and  \cite{BensFY:13}. {  In contrast, our approach in \cite{HelmSZ:26} exploits renewal theory to express the reward rates of thresholds policies as a nonlinear function of the thresholds.  Conditions are given for the existence and uniqueness of a maximizing pair of thresholds, which leads to the solution of the classical impulse control problem.  In this paper, the solution of the classical impulse control problem, together with the fixed-point argument, leads to the existence of an equilibrium for MFG that is a thresholds-type policy. For the MFC, after deriving the upper bound for all $J(R, \kappa^R)$, we derive an explicit solution through a direct method, resulting in an optimal thresholds policy.  However, it is important to emphasize that establishing the upper bound for  $J(R, \kappa^{R})$ over all admissible policies $R$ is far from straightforward. Its derivation hinges on the critical fixed-point-type identity \eqref{e:z=Bid/Bxi}.}

 The long-term average mean field game and control problems \eqref{e:sec2game} and \eqref{e:sec2control} are motivated by and are extensions of those in the paper \cite{Chris-21}. In  their  formulation, $c\equiv 0$ and an exogenous post-impulse level $y_{0}$ is given so that $X^{R}(\tau_{k}) = y_{0}$ for each $k\in \NN$. In addition, the left boundary $a$ is assumed to be an entrance boundary.  In our formulation, $a$ can be an entrance or a natural boundary, thus enlarging the applicability of the model. Moreover,   the post-impulse level $X^{R}(\tau_{k}) $ is not pre-determined and can be chosen, and the right boundary point $b$ can be infinite or finite. The latter case requires an additional condition to be imposed on how fast the diffusion moves close to $b$.
 
 In addition, it is important to differentiate our results from those in \cite{Chris-21}, which proves the existence of an equilibrium harvesting strategy among  single threshold policies and an optimal mean-field strategy among stationary policies. We substantially extend these findings by deriving equilibrium and optimal mean-field strategies within the set of {\em all} admissible impulse strategies.

The study of mean field games and mean field control has experienced a significant surge of interest in recent decades, sparked by the pioneering  works of \cite{LasryL-07} and \cite{HuangMC-06}. For comprehensive expositions on these topics, we refer the reader to \cite{CarmoD-18I, CarmoD-18II} and \cite{BensFY:13}.
In recent years, there has been growing attention to stationary and ergodic formulations of mean field games and control problems. Notably, the long-time behavior of such problems has been investigated in \cite{BardiK-24, CardM-21, CiraP-21}. We also refer to \cite{BayrK-24, BaoT-23, BernC-23, ArapBC-17, Feleqi-13, Albe-22} and  the references therein for recent progress in the study of ergodic mean field games and control.
Furthermore, ergodicity and turnpike properties in linear-quadratic mean field games and control problems have been explored in \cite{BayrJ:25, SunY-24}.

  Notably, the literature on mean field games and control in the context of impulse control remains relatively limited.   Beyond the aforementioned  \cite{Chris-21},  the work \cite{BaseCG-22}  develops and solves a discounted symmetric mean field game involving impulse controls.   We also refer  to the recent work \cite{CaoDF-23}, which analyzes stationary discounted and ergodic mean field games with singular controls. {  
This line of research is extended in \cite{DianFT:23}, where the authors study an ergodic mean field game problem involving two-sided singular controls for regime-switching diffusions. See also the recent paper \cite{ChrisMO-25}, which considers an ergodic mean field game for two-sided singular controls of one-dimensional diffusions. We note that these papers and the current work share several common features: the underlying state process is one-dimensional; the mean field interaction occurs through the distribution of the state process; the scale and speed measures play important roles in the analyses; the focus is on long-term average criteria; and the optimal policies are of barrier or threshold type. Nevertheless, there are also important  differences between these works and our study.

In contrast to the singular control framework adopted in \cite{CaoDF-23,DianFT:23}, and \cite{ChrisMO-25}, the present work focuses on MFG and MFC with impulse controls. These two types of controls are  different, making the analyses and results not directly comparable. In particular, the fixed cost associated with each intervention in our model leads to a qualitatively distinct structure of optimal policies, which are of thresholds-type, as opposed to the continuous adjustments associated with reflection policies derived from solutions to Skorokhod problems. Furthermore, the methods employed in this paper--leveraging renewal theory and the renewal reward theorem--differ markedly from those typically used in singular control problems, which often involve solving  variational inequalities or free boundary problems with smooth pasting.}
 
The rest of the paper is organized as follows.  Section \ref{sect-form-assump}  presents the precise model formulation and collects the key conditions used in the subsequent analysis. It also introduces the class of $(w,y)$-policies and summarizes the main results of the paper. For convenience of presentation, we also recall the main results of   classical impulse control of \cite{HelmSZ:26} in Section~\ref{sect:classical impulse}.

The mean field game problem is studied in Section~\ref{sec-equilibrium}.
 To utilize the results in  Section~\ref{sect:classical impulse} for the classical ergodic impulse control problem, we introduce the class $\La$ of feasible prices  in \eqref{e:set-Lambda-defn}. 
This enables a fixed-point argument for the mapping $\mathfrak z\circ\Psi\circ\varphi$ defined in \eqref{e-MFG-fxpt}, which, in turn, establishes the existence of an equilibrium $(w,y)$-policy.

Section~\ref{sect-mean-field-control} studies the mean field control problem inspired by the Lagrange multiplier method.  It demonstrates that an optimal mean field impulse control policy exists 
under appropriate conditions on the functions $\varphi$, $c$, and $\mu$. Moreover, the optimal policy, which is of $(w,y)$ type, is explicitly characterized.

 A stochastic logistic growth model with an unbounded state space and a population growth model in a stochastic environment with a bounded   state space are presented in Section \ref{sect:example} for illustration.   Some technical proofs are deferred to the Appendix \ref{sect-appendix}.

Throughout the paper, we use the notation that $\lan f,\pi \ran : = \int f d\pi$ if $f$ is a function and $\pi$ is a measure, as long as the integral $ \int f d\pi$ is well-defined. The indicator function of a set $A$ is denoted  by $I_{A}$.

\section{Formulation  and Preliminaries}\label{sect-formulation}

We establish the model under consideration and collect some key conditions that will be used in later sections of the paper in Section \ref{sect-form-assump}. The main result of this paper is presented in Theorem \ref{thm-main-MFGC}.  To facilitate presentation, we also summarize the results of classical impulse control of \cite{HelmSZ:26} in Section \ref{sect:classical impulse}.  {  Our analysis relies on the boundary behavior of the uncontrolled process, as well as the speed measure and scale function, and therefore the state process requires it to be a one-dimensional diffusion.}

\subsection{Formulation,  Assumptions,  and Main Result}\label{sect-form-assump} 
{\bf \em Dynamics.}\/ 
Let $\I: = (a, b) \subset \RR$ with $a> -\infty$ and $b\le \infty$. In the absence of interventions, the process $X_{0}$ satisfies \eqref{e:X0} and is a regular diffusion with state space $\I$.   The measurable functions $\mu$ and $\sigma $ are assumed to be such that a unique non-explosive weak solution to  \eqref{e:X0} exists; we refer to Section 5.5 of \cite{Karatzas-S} for details.  For simplicity, we assume $(\Omega, \F, \{\F_{t}\}, \P)$ is  a filtered probability space with an $\{\F_{t}\}$-adapted Brownian motion $W$ and on which $X_0$ is defined, as well as each controlled process.  In addition, we assume that  $\sigma^{2}(x) > 0$ for all $x \in \I$.  We closely follow the notation and terminology on boundary classifications of one-dimensional diffusions in Chapter 15 of \cite{KarlinT81}.  The following standing assumption is imposed on the model throughout the paper:

\begin{cnd} \label{diff-cnd}
\begin{description}
\item[(a)] Both the speed measure $M$ and the scale function $S$ of the process $X_0$ are absolutely continuous with respect to Lebesgue measure. The scale and speed densities, respectively, are given by 
\begin{equation}
\label{e:s-m}
s(x) : = \exp\bigg\{-\int_{x_{0}}^{x} \frac{2\mu(y)}{\sigma^{2}(y)}dy \bigg\}, \quad m(x) = \frac{2}{\sigma^{2}(x) s(x)}, \quad x\in (a,b),
\end{equation} where $x_{0}\in \I$ is as in \eqref{e:X0} and is an arbitrary point, which will be held fixed. 
\item[(b)]  The left boundary $a > -\infty$ is a non-attracting point and the right boundary $b \le \infty$ is a natural point. 
 Moreover,  \begin{equation} \label{e:M(a-y)-finite}
  M[a, y] < \infty \text{ for each }y \in \I, 
\end{equation} and the (hitting time) potential function $\xi$ defined by  \begin{align}\label{e-xi}
 \xi(x) :=  \int_{x_{0}}^{x} M[a, v] dS(v), \quad  x\in \I
\end{align}  satisfies
\begin{equation}\label{e-sM-infty}
    \lim_{x\to b} \xi'(x) = \lim_{x\to b} s(x) M[a,x] =\infty.
  \end{equation}
\item[(c)] The function $\mu$ is continuous on $\I$ and extends continuously to the boundary points  with $|\mu(a)| < \infty$.
\end{description}
\end{cnd} 

 Condition~\ref{diff-cnd}(a) places restrictions on the model \eqref{e:X0} which seem quite natural for harvesting problems and other applications, such as in mathematical finance.
 The assumption that $a> -\infty$ is a non-attracting point implies that it cannot be attained in finite time by the uncontrolled diffusion.  For growth models  with $a=0$, this condition excludes the possibility of extinction. Likewise,  $b\le \infty$ being a natural boundary prevents the state from exploding to $b$ in finite time.  Note that $a$ can be either an entrance point or a natural point; the state space for $X_{0}$ is respectively $\E = [a, b)$ or $\E = (a, b)$.    
 
\cndref{diff-cnd}(b,c) imposes further limitations on the model.  For instance, the assumption that $|\mu(a)| < \infty$ excludes the consideration of Bessel processes. The assumption that $a$ is non-attracting further implies that $\mu(a)\ge 0$ and that $a$ is an entrance point if $\mu(a) > 0$.  In addition, the finiteness condition \eqref{e:M(a-y)-finite} always holds when $a$ is an entrance boundary but eliminates some diffusions when $a$ is natural;  see Table~6.2 on p.~234 of \cite{KarlinT81}.  Moreover, this requirement implies that the expected hitting times from $w$ to $y$ are finite whenever $a<w<y<b$. 

We now specify the class of admissible impulse policies which, apart from the transversality condition in (iv)(b), is quite standard.  
\begin{defn}[Admissibility]  \label{admissible-policy}
We say that $R : =\{(\tau_{k}, Y_{k}), k =1,2,\dots\}$ is an {\em admissible impulse policy}\/ if 
\begin{itemize}
 \item[(i)] $\{\tau_{k}\}$ is an increasing sequence of $\{\F_{t}\}$-stopping times with $\lim_{k\to\infty} \tau_{k} =\infty$, 
  \item[(ii)]  for each $k\in \NN$,  $Y_{k}$ is $\F_{\tau_{k}}$-measurable with $0 < Y_k \leq X^R(\tau_k-) - a$ when $\tau_k < \infty$, where equality is only allowed when $a$ is an entrance boundary; 
  \item[(iii)] $X^R$ satisfies \eqref{e:X} and we set $\tau_{0} =0$ and $X^{R}(0-) = x_{0}\in \I$; and  
 \item[(iv)]  {\em if} $a$ is a natural boundary, either 
 \begin{itemize}
 \item[(a)] there exists an $N \in \mathbb N$ such that $\tau_N =\infty$, which implies $\tau_k = \infty$ for all $k \geq N$ and, to completely specify the policy, we set $Y_k = 0$ for all $k \ge N$; or
 \item[(b)] $\tau_k < \infty$ for each $k \in \mathbb N$ and, for the function $\xi$  defined in \eqref{e-xi}, it holds that 
 \begin{equation} 
   \label{eq-xi-transversality}
    \lim_{t\to \infty} \limsup_{n\to \infty}t^{-1} \EE[\xi^{-}(X^R(t\wedge \beta_{n}) )]  = 0,
   \end{equation}
 where $\xi^-$ denotes the negative part of the function $\xi$, and  for each $n\in \NN$,  $\beta_n := \inf\{t  \geq 0: X^R(t) \notin( a_n,  b_n)\}$, in which   $\{a_n\}\subset \I $ is a decreasing sequence with $a_n \to a$ and $\{b_n\}\subset \I $ is an increasing sequence with $b_n \to b$.  
\end{itemize} \end{itemize}
 We denote by $\A$ the set of admissible impulse strategies. 
\end{defn} 

\begin{rem} \label{rem-admissible-policy}
  An admissible impulse policy $R$ satisfying Definition \ref{admissible-policy} (i), (ii), (iii), and (iv)(a) has a finite number of  interventions or no intervention (corresponding to the case when $\tau_1 =\infty$); the latter  is  called a ``do-nothing'' policy and is denoted by $\mathfrak R$.  For convenience of later presentation, we denote by $\AF$ the set of admissible policies with  at most a finite number of interventions.   
 
 On the other hand, if $R\in \A$ satisfies Definition \ref{admissible-policy} (i), (ii), (iii), and (iv)(b), the number of interventions is infinite; the set of such  policies is denoted by $\AI$. We have 
 $$\A =  \AF \cup \AI \quad \text{  and }\quad  \AF \cap \AI = \emptyset.$$  
 Note that \eqref{eq-xi-transversality} is a transversality condition {\em imposed only on diffusions for which $a$ is a natural boundary.}  It is satisfied by the $(w,y)$-policies to be defined in \defref{thresholds-policy}; see  Proposition \ref{prop-Rwy} for details.  When $a$ is an entrance boundary, \eqref{eq-xi-transversality} is automatically satisfied because $\xi$ is bounded below. Finally we point out that the $\{ \beta_n\}$ sequence in \eqref{eq-xi-transversality} satisfies   $\lim_{n\to\infty} \beta_{n} =\infty$ a.s. since  $a$ is non-attracting and $b$ is natural thanks to Condition \ref{diff-cnd}.  
 
  We emphasize that the admissible impulse policies defined in Definition \ref{admissible-policy} are not required to be of any particular type, such as a thresholds policy or a stationary policy.  For example, while the class of thresholds policies belongs to the admissible set  $\A$, nonstationary policies that alternate  between a finite number of such policies are also admissible.  Requiring transversality in \defref{admissible-policy}(iv)(b) for models in which $a$ is a natural boundary is a weak condition that allows for a large class of admissible policies.
  \end{rem}
  
We next introduce an important special class of impulse policies that will play a central role in the analyses of both the MFG and MFC problems. 
\begin{defn}[$(w,y)$-Policies] \label{thresholds-policy}
 Let $(w,y) \in \cR: = \{(w,y)\in \E\times \E: w < y\}$ and set $\tau_0 = 0$ and $X^{(w,y)}(0-)=x_0$.  Define the $(w,y)$-policy $R^{(w,y)}$, with corresponding state process $X^{(w,y)}$, such that for $k \in \NN$,
$$ \tau_k = \inf\{t > \tau_{k-1}: X^{(w,y)}(t-) \geq y\} \qquad \text{and} \qquad Y_k = X^{(w,y)}(\tau_k-) - w.$$ 
The   definition of $\tau_k$ must be slightly modified when $k = 1$ to be $\tau_1 = \inf\{ t\ge 0: X(t-)\ge y\}$ to allow for the first jump to occur at time 0 when $x_0\ge y$.

Under this policy, the impulse controlled process $X^{(w,y)}$ immediately resets to the level $w$ at the time it would reach (or initially exceed) the threshold $y$. \end{defn} 

\begin{rem}\label{Rem-about-(wy)}
 {  $(w, y)$-policies are also referred to as threshold policies in the literature. They are important in applications due to their intuitive structure and ease of implementation. From a theoretical perspective, the controlled processes under such policies are more tractable to analyze; in particular, renewal theory can be employed to study their long-term behavior. Moreover, $(w,y)$-policies are often optimal for a broad class of impulse control problems; see,  for example, \cite{Bens:05, DaiY-13-average, HeYZ-17,HelmSZ-17,HelmSZ:26}. In addition, as established in Theorems \ref{prop-game-equi} and \ref{thm-mean-field-control}, an equilibrium policy for the MFG problem \eqref{e:sec2game} and an optimal policy for the MFC problem \eqref{e:sec2control} are of $(w,y)$-type.} 
\end{rem}

The next proposition establishes an important bound $z_0$ on the product supply rate for any $(w,y)$-policy.  In addition, for each $z \in (0,z_0)$, it shows that there is a $(w,y)$-policy having supply rate $z$. To preserve the flow of the presentation, we defer the proof of this proposition to Appendix~\ref{sect-appendix}.  This result is crucial for the analyses of both the MFG and MFC problems.   Define the function   \begin{align}\label{eq-kappa-Q}  {\mathfrak z}(w,y): = \frac{y-w}{\xi(y) - \xi(w)}, \ \quad  (w, y) \in \cR. \end{align} 

\begin{prop}  \label{prop-z=Bid/Bxi} Assume Conditions \ref{diff-cnd} holds.   
Then
\begin{description}
  \item[(i)]  the function $\xi$ of \eqref{e-xi} satisfies  
\begin{equation}
\label{eq:z0defn}
 z_{0}: = \sup_{x\in \I} \frac{1}{|\xi'(x)|} = \sup_{x\in \I} \frac{1}{\xi'(x)} <  \infty;
\end{equation}
\item[(ii)]  for any $z\in (0, z_{0})$, there exists a pair     $  (w,  y) \in \cR$ so that $\mathfrak z(w,y) = z$; 
  \item[(iii)]  on the other hand, $\mathfrak z(w,y) \le z_{0}$ for every  $  (w,  y) \in \cR$; and  
  \item[(iv)]  if Condition~\ref{3.9-suff-cnd} also holds,  then $\mathfrak z(w,y) <  z_{0}$ for every  $  (w,  y) \in \cR$.  
\end{description} 
\end{prop}

We now turn to the formulation of the rewards.

\noindent
{\bf \em Reward Structure.}\/  A running reward is earned at rate $c$, which depends on the state of the process $X^{R}$.  The impulse reward is proportional to the size of the impulse,  with the unit price determined by the market's overall supply rate through the price function $\vphi$.  Each intervention also incurs a fixed cost $K> 0$. Consequently, given the supply rate $z$ and corresponding price $p=\vphi(z)$, the long-term average reward for the product manager who adopts the strategy $R\in \A$ is given by \eqref{e:reward-fn}. We assume that $c$ and  $\vphi$ satisfy the following condition.

\begin{cnd} \label{c-cond}  \begin{description} \item[(a)]   
 The function $c:\I\mapsto \R_{+}$ is continuous,     increasing, and extends continuously at the endpoints, with $0 \le c(a) < c(b) < \infty$.  
 \item[(b)] The function $\vphi: \R_{+}\mapsto \R_{+} $ is continuous. {  Moreover, 
 for each supply rate $z\in [0, z_{0}]$ and the corresponding price $\vphi(z)$, there exists some $(w,y)$-policy $R^{(w,y)}$ (defined in \defref{thresholds-policy}) that outperforms the  ``do-nothing'' policy $\mathfrak R$. That is,  for the  reward function $J(R^{(w,y)};z)$ defined in \eqref{e:reward-fn}, we have  
\begin{equation}
\label{eq:fund-assump}
J(R^{(w,y)};z) > J(\mathfrak R) = \liminf_{t\to \infty}t^{-1} \EE\bigg[ \int_0^t c(X_0(s))\, ds\bigg].
\end{equation}
The right-most   expression of \eqref{eq:fund-assump} is the long-term average running reward in the absence of any interventions, as indicated using the process $X_0$.} 
\end{description}
\end{cnd} 

\begin{rem} Several comments are in order regarding Condition \ref{c-cond}. 

The mean field problems become simpler when the running reward $c$ is constant; this constant rate merely adds to the net long-term average reward rate from impulse interventions.  
Our analysis remains valid, and the expressions simplify significantly when $c$ is constant.  We present the more challenging problems in which $c$ is non-constant.

As indicated in the introduction, to have meaningful mean field game and control problems we require that the price structure be such that it is always beneficial to intervene with an impulse as compared to the do-nothing policy.  Condition \ref{c-cond}(b) is therefore imposed on the model.   {   Remark \ref{rem-about-condition2.6}(a) presents an equivalent formulation  for this condition   using the notion of   ``feasible prices."   Furthermore, should this condition not be satisfied, then the do-nothing policy $\mathfrak R$ may be both an equilibrium for the MFG \eqref{e:sec2game} and an optimal policy for the MFC \eqref{e:sec2control}; see Remarks \ref{rem-about-condition2.6}(b) and \ref{rem-MFC-cond26} for details.  }

Finally we note that     
 the long-term average reward for every $R\in  \AF$ is equal to that of the do-nothing policy  $\mathfrak R$. In addition, we have $\kappa^R =0$ for every $R \in \AF$.
\end{rem}

We now introduce a key technical condition necessary for developing the fixed-point arguments in Sections \ref{sec-equilibrium} and \ref{sect-mean-field-control}; it connects the mean growth rate $\mu$ to the running reward  rate $c$.   Together with Conditions \ref{diff-cnd} and   \ref{c-cond}, 
 this condition allows us to establish and characterize the existence of a mean-field equilibrium in \thmref{prop-game-equi} and a mean-field optimal impulse control in Theorem \ref{thm-mean-field-control}.

\begin{cnd} \label{3.9-suff-cnd}
There exists some $\hat x_{\mu,c}\in \I$ so that $\mu$ is strictly increasing on $(a, \hat x_{\mu,c})$ and the functions $c$ and $\mu$ are concave on $(\hat x_{\mu,c}, b)$.
\end{cnd}

{  The next theorem  presents the main result of this paper. 
\begin{thm}\label{thm-main-MFGC}
  Suppose Conditions \ref{diff-cnd}, \ref{c-cond}, and \ref{3.9-suff-cnd} hold. Then both the MFG \eqref{e:sec2game} and the MFC \eqref{e:sec2control} are solvable in $\A$. In other words, there exist $Q^{\mathrm e}, Q^\star \in \A $ so that \begin{align*} 
    J(R, \kappa^{Q^{\mathrm e}}) & \le J(Q^{\mathrm e}, \kappa^{Q^{\mathrm e}}), \quad \forall R\in \A,\intertext{and}  
    J(R, \kappa^R) & \le J(Q^\star, \kappa^{Q^\star}), \quad \forall R\in \A.
  \end{align*} 
    $Q^{\mathrm e}$ and $Q^\star$ are referred to as the {\em mean field equilibrium} and {\em optimal mean field control} policies, respectively, and both are of $(w,y)$-type.  
\end{thm} \begin{proof}
  See Theorems \ref{prop-game-equi} and  \ref{thm-mean-field-control}.  
\end{proof}}

  To facilitate subsequent  presentations, we   introduce the   operators $A$ and $B$ as well as the average expected occupation and impulse measures that will be used frequently in the sequel.
The generator of the process $X^{R}$ between jumps (corresponding to the uncontrolled diffusion process $X_{0}$) is 
\begin{equation} \label{generator}
A f:= \frac12 \sigma^{2} f'' + \mu f' = \frac12 \frac{d}{dM}(\frac{df}{dS}),
\end{equation}  
where $f \in C^{2}(\I)$.   
For any function $f: \E  \mapsto \R$, define  
 \begin{equation} \label{e:Bf-defn}
  Bf(w, y): = f(y) - f(w), \quad (w, y) \in \cR.
\end{equation} The operator $B$ captures the effect of an instantaneous impulse. 

For any policy $R\in\mathcal A$, for each $t > 0$, we can define the average expected occupation measure $\mu^{R}_{0,t}$ on ${\cal E}$ and the average expected impulse  measure $\mu^{R}_{1,t}$ on $\overline{\cal R}$ by
\begin{equation} \label{mus-t-def}
\begin{aligned}
\mu^{R}_{0,t}(\Gamma_0) &= \displaystyle  t^{-1} \EE\left[\int_0^t I_{\Gamma_0}(X^{R}(s))\, ds\right], & \quad \Gamma_0 \in {\cal B}({\cal E}), \rule[-15pt]{0pt}{15pt} \\
\mu^{R}_{1,t}(\Gamma_1)  &= \displaystyle  t^{-1} \EE\left[\sum_{k=1}^\infty I_{\{\tau_k \leq t\}} I_{\Gamma_1}(X^{R}(\tau_k),X^{R}(\tau_k-))\right], & \quad \Gamma_1 \in {\cal B}(\overline{\cal R}),
\end{aligned}
\end{equation} 
Using these measures, we can rewrite the long-term average product supply rate $\kappa^{R}$ of \eqref{e:kappaQ} as \begin{equation}
\label{e2:kappaQ}
\kappa^{R} =  \limsup_{t\to\infty} \lan B\id, \mu^{R}_{1, t}\ran,
\end{equation} where $\id(x) : =x, x\in \E$, is the identity function.  Moreover, the functional $J(R; z)$ of \eqref{e:reward-fn} can be expressed as 
\begin{equation}
\label{e2:reward-fn}
J(R; z) =  \liminf_{t\to\infty} [ \lan c, \mu^{R}_{0,t}\ran + \lan \vphi(z) B\id     - K, \mu^{R}_{1,t}\ran].
\end{equation}

Our solution to the mean-field game and impulse control problems \eqref{e:sec2game} and \eqref{e:sec2control} relies on detailed analyses of $(w,y)$-policies introduced in Definition \ref{thresholds-policy}.  \propref{prop-Rwy} collects some important properties of these policies, while  Proposition \ref{prop-z=Bid/Bxi} analyzes their associated supply rates. 
In addition to the time potential $\xi$ of \eqref{e-xi}, 
we define the running reward potential $g$: \begin{align}
\label{e:g-defn}
g(x)&: = \int_{x_{0}}^{x}\int_{a}^{v} c(u) dM(u) dS(v), \quad  x\in \I. \end{align} 
These functions will play a central role in the ensuing analysis for mean field game and mean field control problems. We refer to \cite{HelmSZ:26} for the key  properties of  these functions.


{  \begin{rem}\label{Rem:t-formulation} We conclude this subsection with a remark on the reward functional $J(R, \kappa^Q)$ 
given in \eqref{e:sec2game} and \eqref{e:sec2control}, where $Q, R \in \A$. In view of \eqref{e:reward-fn} or \eqref{e2:reward-fn}, we see that  the reward obtained from the impulse control depends on the long-term average supply rate 
$\kappa^Q$ through the price function $\vphi$. One may consider an alternative reward functional where the price depends on the  average supply rate up to {\em the present time}.  In other words, letting $ \kappa_t^Q: = \lan B\id, \mu_{1,t}^Q\ran$ be the average supply rate up to time $t$ of the policy $Q\in \A$, then we can define \begin{align}\label{e:reward-hatJ}
  \hat J(R, Q) : = \liminf_{t\to\infty}[\lan c, \mu_{0,t}^R\ran + \lan \vphi({ \kappa_t^Q}) B\id - K, \mu^R_{1, t}\ran].
\end{align} 
 However,   there is no difference between these formulations as long as $Q\in \A_0$, where $\A_0: = \{R\in \A: \kappa^R = \lim_{t\to\infty} \kappa_t^R\}$. Note that $\A_0 \neq \emptyset$ because it contains all $(w,y)$-policies as well as all policies in $\AF$ and policies which alternate between finitely many $(w,y)$-policies.   In other words, for any $R\in \A $ and $Q\in \A_0$, we have 
 \begin{align}\label{eq:reward_equiv}
  J(R, \kappa^Q) = \hat J(R, Q).
 \end{align} 
 The proof of \eqref{eq:reward_equiv} is arranged in Appendix \ref{sect-appendix}.
\end{rem}  
  }


\subsection{Classical Impulse Control Problem}\label{sect:classical impulse}

We now summarize our solution in \cite{HelmSZ:26} to   the classical long-term average impulse control problem  of maximizing the reward functional
\begin{align}
\label{e:reward-1player} 
\nonumber
J(R) : &= \liminf_{t\to\infty}  t^{-1} \EE \bigg[ \int_{0}^{t}c(X(s)) ds +  \sum_{k=1}^{\infty} I_{\{\tau_{k} \le t\}}  [p (X(\tau_{k}-)- X(\tau_{k}) ) -   K]\bigg]\\ 
  & =  \liminf_{t\to\infty} [\lan c, \mu_{0, t}\ran + \lan pB\id -K, \mu_{1, t}\ran], 
\end{align}
where $R = \{ (\tau_{k}, Y_{k}), k =1, 2,\dots\}\in \A$ is an admissible impulse policy, $X= X^{R}$ is the controlled process with initial condition $X(0-) = x_{0}\in \I$,  $p>0$ is the unit price for impulse, and $   K > 0$ is the fixed cost for each execution of impulse.  Recall the measures $ \mu_{0, t} =   \mu_{0, t}^{R}$ and $ \mu_{1, t}=  \mu_{1, t}^{R}$ defined in \eqref{mus-t-def}. For notational simplicity, we omit the superscript $R$ in $X^{R},   \mu_{0, t}^{R}$, and  $\mu_{1, t}^{R}$  throughout the section. Note that the scaling parameter $\gamma$ of \cite{HelmSZ:26} is set to be $\gamma=1$ in this section.  

First, the model satisfies \cndref{diff-cnd} and the admissible policies are defined in \defref{admissible-policy}.  \cndref{c-cond}(a) on the subsidy $c$ is also imposed but \cndref{c-cond}(b) simplifies due to $p$ being fixed; see \cndref{cond-interior-max} below. All results in this section, except for Lemma \ref{lem-h-new}(ii), are established in \cite{HelmSZ:26}.   
 


We begin by establishing some important facts about $(w,y)$-policies, for $(w,y) \in \cR$.
\begin{prop}\label{prop-Rwy}
  Suppose Conditions \ref{diff-cnd} and \ref{c-cond}(a) hold. Then for any $(w,y) \in \cR$,
\begin{itemize}
  \item[{\em (i)}] the policy $R^{(w,y)}$ is an admissible impulse policy in the sense of Definition \ref{admissible-policy}; 
  \item[{\em (ii)}]  the long-term average supply rate of the policy $R^{(w,y)}$ is 
\begin{align*}
\kappa^{R^{(w,y)}} & 
  =  \limsup_{t\to\infty}   \lan B\id, \mu^{R^{(w,y)}}_{1,t}\ran 
= \mathfrak z(w,y),    
\end{align*}  
  where the function $\mathfrak z$ is defined in \eqref{eq-kappa-Q}; and 
\item[{\em (iii)}]  $J(R^{(w,y)};z)
= F_{p}(w, y)$, where  $p = \vphi(z)$ and  the function $F_p$ is defined by 
 \begin{equation}
\label{e:F_K}
F_{p}(w, y): = \frac{  g(y) - g(w) + p(y-w)- K}{\xi(y) - \xi(w)}, \quad  (w,y) \in \mathcal R.
\end{equation}
\end{itemize}
\end{prop} 

 Two additional conditions are imposed on the model for the classical impulse control problem.  We first identify the value of the do-nothing policy $\mathfrak R$  since the replacement for \cndref{c-cond}(b) assumes the existence of some $(w,y)$-policy  that outperforms  $\mathfrak R$. 
Under $\mathfrak R$, the corresponding (un)controlled process is $X_0$ of \eqref{e:X0} and  \cite{HelmSZ:26} demonstrates   that  
\begin{align}\label{e:reward-zero-policy}
 J(\mathfrak R) & = \liminf_{t\to \infty}t^{-1} \EE\bigg[ \int_0^t c(X_0(s))\, ds\bigg]  = \bar c(b), 
\end{align}  where   \begin{equation} \label{eq:c-bar(b)}
   \bar c(b): =  \begin{cases}
c(b),     & \text{ if } M[a, b] = \infty,\\  
\frac{\lan c,M\ran}{M[a, b]},      & \text{ if } M[a, b] < \infty.  \end{cases} 
\end{equation}    

We now introduce a simplification of \cndref{c-cond}(b).

\begin{cnd}\label{cond-interior-max}
There exists a pair $(\wdt w_{p}, \wdt y_{p}) \in \cR$ so that $F_{p} (\wdt w_{p}, \wdt y_{p}) > \bar c(b)$. 
\end{cnd}   

 For the classical impulse control problem, a weaker condition than \cndref{3.9-suff-cnd} is imposed on the model which is used to establish the uniqueness of a maximizer for $F_p$.  The condition involves the interplay, at price $p$, between the subsidy rate $c$ and the mean growth rate $\mu$.  
 Define the reward function \begin{equation}\label{e:r_p}
  r_p(x) = c(x) + p \mu(x), \ \ x\in \I.
\end{equation} \begin{cnd}  \label{r_p-behavior}
There exist some $\wdh x_p, \wdt x_p \in \I$ with $\wdh x_p \leq \wdt x_p$  such that $r_p$ is strictly increasing on $(a,\wdh x_p)$, $r_p$ is constant on $[\wdh x_p, \wdt x_p]$ and $r_p$ is strictly decreasing on $(\wdt x_p , b)$.
\end{cnd} 
The first-order necessary conditions for the maximizer of $F_p$ are expressed in \eqref{e-1st-order-condition} following a rearrangement and involve the function $h_p$, defined by 
\begin{equation}  \label{e-h-fn-defn}
h_{p}(x) = \frac{g'(x) +p}{\xi'(x)},  \qquad x\in \I.
\end{equation}

The following proposition identifies the solution to the impulse control problem with fixed price $p$.
\begin{prop}\label{prop-Fmax} Assume Conditions \ref{diff-cnd}, \ref{c-cond}(a), \ref{cond-interior-max}, and \ref{r_p-behavior} 
  hold.
 \begin{itemize}
    \item[{\em(a)}] 
 There exists a  unique pair $(w_{p}^*,y_{p}^*)\in \cR$ so that \begin{equation}
\label{e-Fmax}
F_{p}(w^{*}_{p}, y^{*}_{p}) = \sup_{(w, y)\in \cR} F_{p}(w, y)=:F_p^*. 
\end{equation} Furthermore, the optimizing pair satisfies the following (rearranged) first-order conditions: 
\begin{itemize}
  \item[{\em(i)}] If $a$ is a natural point, then every optimizing pair  $(w_{p}^*,y_{p}^*) \in \cR$  satisfies $a < w^{*}_{p} < y^{*}_{p} < b$ and
\begin{equation}\label{e-1st-order-condition}  
F_{p}^* =  h_{p} (w^{*}_{p}) = h_{p}(y^{*}_{p}),
\end{equation} where the function $h_p$ is defined by \eqref{e-h-fn-defn}.

  \item[{\em(ii)}] If $a$ is an entrance point, then an optimizing pair  $(w_{p}^*,y_{p}^*) \in \cR$ may have $w_{p}^*=a$; in such a case, we have 
 \begin{equation}\label{e2-1st-order-condition}  h_{p} (a) \ge   {F_{p}(a, y^{*}_{p})} = F_p^* = h_{p}(y^{*}_{p}). \end{equation} 
But if $w_{p}^*>a$, \eqref{e-1st-order-condition} still holds.  
\end{itemize}  
\item[{\em(b)}] For any admissible policy
 $R\in {\mathcal A}$, 
  we have
\begin{align}\label{e-reward-bdd}\nonumber
J(R) & = \liminf_{t\to\infty} [\lan c, \mu_{0, t}\ran + \lan pB\id -K, \mu_{1, t}\ran] \\ 
               & \le  \limsup_{t\to\infty} [\lan c, \mu_{0, t}\ran + \lan pB\id -K, \mu_{1, t}\ran]  \le  F_{p}^{*} = F_{p}(w^{*}_{p}, y^{*}_{p}), 
\end{align} 
and the $(w^{*}_{p}, y^{*}_{p})$-strategy is an optimal policy. 
\end{itemize}
  \end{prop} 

  \begin{rem}\label{rem1-transversality}   
 In fact,  the $(w^{*}_{p}, y^{*}_{p})$-thresholds policy is optimal in a larger  class $\A_p\supset \A$, which we now define. Using the time and running reward potentials $\xi$ and $g$, we define the impulse reward potential $G_{p}$ by\begin{align*}
  G_p(x): = F_p^* \xi(x)- g(x), \quad x\in\I.
 \end{align*} 
 Next, we present an alternative to  (iv)(b) in Definition  \ref{admissible-policy}:
\begin{description}
\item[\defref{admissible-policy}(iv)(c)] $\tau_k < \infty$ for all $k\in \mathbb N$ and, {\em if} $a$ is a natural boundary, then 
\begin{equation}
  \label{eq-transversality}
  \liminf_{t\to \infty} \liminf_{n\to \infty}t^{-1} \EE[G_{p}(X(t\wedge \beta_{n}))] \ge  0
  \end{equation}  in which the sequence $\{\beta_n\}$ is given in \defref{admissible-policy}.
\end{description}
Now $\A_p$ is the class of admissible policies   satisfying \defref{admissible-policy}(i), (ii), (iii), and (iv)(a)  or (iv)(c).

We demonstrated in \cite{HelmSZ:26} that \eqref{eq-transversality} holds whenever \eqref{eq-xi-transversality} does, so $\A \subset \A_p$. In particular, the  $(w^{*}_{p}, y^{*}_{p})$-policy is optimal  in this larger class $\A_p$.
\end{rem}

  We finish this section with some important observations about the function $h_p$ defined in \eqref{e-h-fn-defn}. First, detailed  calculations  
reveal that for any $x\in \I$,  we have
 \begin{align} \label{sect 2-e-h-defn}  
 h_{p}(x)   
 = \frac{\int_{a}^{x} (c(u) + p \mu(u) )dM(u) }{M[a,x]}= \frac{\int_{a}^{x}   r_{p}(y)  dM(y)}{M[a, x]},
\end{align}
 and 
\begin{align}\label{e:h'-expression}  h'_{p}(x) &=  \frac{m(x)}{M[a,x]} (r_{p}(x) - h_{p}(x))  = \frac{m(x)}{M^{2}[a,x]} \int_{a}^{x} [r_{p}(x) - r_{p}(y)] dM(y).
\end{align}   
In addition, we have the following lemma. To preserve the flow of the paper, we present the proof of this lemma in Appendix \ref{sect-appendix}; the key ideas are similar to those used in the proof of Lemma 4.7 of \cite{HelmSZ:26} but with some important modifications as the assumptions in the two papers differ.  
 \begin{lem}\label{lem-h-new} 
  \begin{description}
    \item[(i)]  Assume Condition \ref{c-cond}(a). Then
 $\lim_{x\to b}h_{p}(x ) = \bar c(b)$ for any $p\in \R$. 
    \item[(ii)]  Assume Conditions \ref{diff-cnd},    \ref{c-cond}(a), \ref{3.9-suff-cnd}, and \ref{cond-interior-max}  hold. Define \begin{equation}
\label{e-y-hat-p-defn}
y_{p} : = \min \{x \in \I: h'_{p}(x) = 0\}.
\end{equation} Then $h_{p}$ is strictly increasing on $(a, y_{p} )$ and strictly decreasing on $(y_{p}, b)$.
  \end{description}
 \end{lem}


%


\section{Mean Field Games}\label{sec-equilibrium} 
Building upon the preparatory work on classical long-term average impulse control presented in Section~\ref{sect:classical impulse}, we proceed to analyze the mean field game \eqref{e:sec2game}.  Following the usual  mean field game framework, and considering that our mean field structure \eqref{e:reward-fn} relies on the stationary supply rate $z$, we seek an admissible policy $Q$ whose associated supply rate satisfies the fixed-point condition. Specifically, given the stationary supply rate $\kappa^{Q}$ and hence the unit price $\vphi(\kappa^{Q})$, an individual agent aims to optimize her policy, such that the resulting long-term supply rate matches   $\kappa^{Q}$.
This section's main results in  \thmref{prop-game-equi}  establish sufficient conditions for the existence of an equilibrium within the set of admissible impulse controls $  \A$.  Furthermore, the equilibrium is a $(w,y)$-policy and is explicitly characterized.
 
 We begin by observing that, as shown in Lemma \ref{lem2-transversality} below, 
   the long-term average supply rate of  {\em any admissible policy}\/ $R$ is bounded above by the  constant $z_0$ defined in \eqref{eq:z0defn}
  due to the transversality requirement \eqref{eq-xi-transversality}.
 As a result, we can restrict    $\vphi$ to be a   function from $[0,  z_{0}] $ to $\R_{+}$, which explains the reason \cndref{c-cond}(b) is restricted to $z \in [0,z_0]$.

\begin{lem}\label{lem2-transversality} Assume Condition \ref{diff-cnd} holds.
For any $R=(\tau, Y) \in \A$, we have  \begin{align} \label{e:kappa<z0}
\kappa^{R} & =  \limsup_{t\to\infty} \lan B\id, \mu_{1,t}^{R}\ran  
\le z_{0} .
\end{align}
\end{lem}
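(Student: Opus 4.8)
The plan is to apply Dynkin's formula to the time potential $\xi$ of \eqref{e-xi} along the controlled process and to combine two facts: that $\xi$ solves $A\xi\equiv 1$ on $\I$, and that $\xi'$ is bounded below by $1/z_0$. First I would dispose of the case $R\in\AF$, where there are at most finitely many interventions and hence $\kappa^R=0\le z_0$, as recorded in Remark~\ref{rem-admissible-policy}; it then remains to treat $R\in\AI$, for which $\tau_k<\infty$ for all $k$ and the transversality condition \eqref{eq-xi-transversality} is in force.

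For such $R$, recall that $\xi'=s\,M[a,\cdot]$, and a direct computation from the explicit densities \eqref{e:s-m} gives $A\xi\equiv 1$ on $\I$. Fixing $t>0$ and localizing at $\beta_n$ keeps $X^R$ in the compact interval $(a_n,b_n)$, on which $\xi'\sigma$ is bounded, so the stochastic integral is a genuine martingale; Dynkin's formula for $\xi(X^R)$ then gives, after taking expectations,
\[
\EE\big[\xi(X^R(t\wedge\beta_n))\big]=\xi(x_0)+\EE[t\wedge\beta_n]-\EE\Bigg[\sum_{k:\,\tau_k\le t\wedge\beta_n}\big(\xi(X^R(\tau_k-))-\xi(X^R(\tau_k))\big)\Bigg].
\]

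The key step is to bound each jump decrement from below. Because $\xi'(x)=s(x)M[a,x]\ge 1/z_0$ for all $x\in\I$ by the definition \eqref{eq:z0defn} of $z_0$, and $X^R(\tau_k)<X^R(\tau_k-)$, we have $\xi(X^R(\tau_k-))-\xi(X^R(\tau_k))\ge z_0^{-1}\big(X^R(\tau_k-)-X^R(\tau_k)\big)=z_0^{-1}Y_k$. Substituting this, using $\EE[t\wedge\beta_n]\le t$ and $-\xi\le\xi^-$, I obtain
\[
\frac{1}{z_0}\,\EE\Bigg[\sum_{k:\,\tau_k\le t\wedge\beta_n}Y_k\Bigg]\le \xi(x_0)+t+\EE\big[\xi^-(X^R(t\wedge\beta_n))\big].
\]
Letting $n\to\infty$, the left-hand side converges to $z_0^{-1}\EE[\sum_{k:\,\tau_k\le t}Y_k]$ by monotone convergence (here $\beta_n\uparrow\infty$ a.s.\ and $Y_k\ge 0$), while the right-hand side is bounded by $\xi(x_0)+t+\limsup_n\EE[\xi^-(X^R(t\wedge\beta_n))]$. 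Multiplying by $z_0/t$ and sending $t\to\infty$, the term $\xi(x_0)/t$ vanishes, and the boundary term vanishes precisely by \eqref{eq-xi-transversality} (and trivially, since $\xi$ is then bounded below, when $a$ is an entrance boundary), which yields $\kappa^R=\limsup_{t\to\infty}\langle B\id,\mu^R_{1,t}\rangle\le z_0$.

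I expect the main obstacle to be the boundary term $\EE[\xi^-(X^R(t\wedge\beta_n))]$: when $a$ is a natural boundary, $\xi(x)\to-\infty$ as $x\to a$, so $\xi^-$ measures the mass the process places near the left boundary and need not be negligible for a general policy. The argument closes only because the double-limit order $\lim_t\limsup_n$ in the transversality requirement \eqref{eq-xi-transversality} matches exactly the order in which the localization $\beta_n$ is removed ($n\to\infty$) and then $t\to\infty$ is taken --- which is precisely why \eqref{eq-xi-transversality} is built into Definition~\ref{admissible-policy}(iv)(b).
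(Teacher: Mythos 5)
Your proof is correct and follows essentially the same route as the paper's: apply It\^o/Dynkin's formula to $\xi(X^R(\cdot\wedge\beta_n))$ using $A\xi\equiv 1$, remove the localization by monotone convergence, and kill the boundary term with the transversality condition \eqref{eq-xi-transversality}. The only cosmetic difference is that you bound each jump by $\xi(X^R(\tau_k-))-\xi(X^R(\tau_k))\ge z_0^{-1}Y_k$ inline (via $\xi'\ge 1/z_0$ and the mean value theorem) before passing to the limit, whereas the paper first establishes $\limsup_{t\to\infty}\lan B\xi,\mu_{1,t}^R\ran\le 1$ and then invokes Proposition~\ref{lem-z=Bid/Bxi}(ii), which encodes exactly the same inequality $B\id\le z_0\,B\xi$.
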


\begin{proof} Obviously \eqref{e:kappa<z0} holds for $R\in \AF$ as $\kappa^R =0$. We now consider an arbitrary  $R  \in \AI$   and denote by $X= X^R$ the associated controlled process. Also let $\{\beta_{n}\}$ be  the sequence of stopping times given in Definition \ref{admissible-policy}. 
We apply It\^{o}'s formula to the process $\xi(X(t))$,  observing  that $A \xi(x) = 1$ for all $x\in \I$,  
\begin{align*} 
 \EE&_{x_0}[\xi(X(t\wedge \beta_{n}))]
  = \xi(x_0) + \EE_{x_0}[t\wedge \beta_{n}] + \EE_{x_0}\Bigg[  \sum_{k=1}^{\infty}I_{\{\tau_{k}\le t\wedge \beta_{n} \}} (\xi(X(\tau_{k})) - \xi(X(\tau_{k}-)))\Bigg].
\end{align*}
Since $\xi$ is a monotone increasing function, $\xi(X(\tau_{k})) - \xi(X(\tau_{k}-)) \le 0$ for each $k\in \NN$. Thus by the monotone convergence theorem, we have 
\begin{displaymath}
\lim_{n\to\infty} \EE_{x_0}[\xi(X(t\wedge \beta_{n}))] =  \xi(x_0) +  t +  \EE_{x_0}\Bigg[  \sum_{k=1}^{\infty}I_{\{\tau_{k}\le t \}} (\xi(X(\tau_{k})) - \xi(X(\tau_{k}-)))\Bigg]. 
\end{displaymath}
Dividing both sides by $t$ and  taking the limit  as $t\to\infty$,   we note that the limit on the left-hand side is nonnegative since  $R\in \AI$.  Thus,  we have 
\begin{equation}
\label{e2-transversality}
 \limsup_{t\to\infty}\;  t^{-1} \EE_{x_{0}}\Bigg[  \sum_{k=1}^{\infty}I_{\{\tau_{k} \le t\}} (\xi(X(\tau_{k}-)) - \xi(X(\tau_{k})))\Bigg] =  \limsup_{t\to\infty} \lan B\xi, \mu_{1, t}\ran \le   1.
\end{equation}  
Then, it follows from   \eqref{e2-transversality} and Proposition \ref{prop-z=Bid/Bxi} that   \begin{align*} 
\kappa^{R}  =  \limsup_{t\to\infty} \lan B\id, \mu_{1,t}^{R}\ran   =  \limsup_{t\to\infty} \lan \frac{B\id}{B\xi} B\xi, \mu_{1,t}^{R}\ran 
\le z_{0} <\infty.\end{align*} This establishes \eqref{e:kappa<z0} and hence completes the   proof.   
\end{proof}

\comment{  We now recall the key results about classical long-term average impulse control problems studied in  the paper \cite{HelmSZ:26}. 
 For a given price $p\in \R$, we define the function where  the time potential   $\xi $  and the running reward potential are  defined in \eqref{e-xi} and \eqref{e:g-defn}, respectively.  Using renewal theory, we can show that $F_p(w,y)$ is equal to the long-term average reward of the $(w,y)$-policy $R^{(w,y)}$; see Proposition \ref{prop-Rwy}. Consider the do-nothing policy $\mathfrak R$, the corresponding (un)controlled process is $X_0$ of \eqref{e:X0}. We   have observed in \cite{HelmSZ:26} that   
\begin{align}\label{e:reward-zero-policy}
 J(\mathfrak R) & = \liminf_{t\to \infty}t^{-1} \EE\bigg[ \int_0^t c(X_0(s))\, ds\bigg]  = \bar c(b), 
\end{align}  where   \begin{equation} \label{eq:c-bar(b)}
   \bar c(b): =  \begin{cases}
c(b),     & \text{ if } M[a, b] = \infty,\\  
\frac{\lan c,M\ran}{M[a, b]},      & \text{ if } M[a, b] < \infty.  \end{cases} 
\end{equation}    We assume that there exists at least one $(w, y)$-policy
that outperforms the do-nothing policy $\mathfrak R$; that is, we impose the following condition {\em when the price $p\in \R$ is given}: 
\begin{cnd}\label{cond-interior-max}
There exists a pair $(\wdt w_{p}, \wdt y_{p}) \in \cR$ so that $F_{p} (\wdt w_{p}, \wdt y_{p}) > \bar c(b)$. 
\end{cnd}   Under Conditions  \ref{diff-cnd}, \ref{c-cond}(a), \ref{3.9-suff-cnd}, and \ref{cond-interior-max},   the function $F_{p}$   has a unique maximizing pair $(w_{p}^{*}, y_{p}^{*}) \in \cR$; moreover,  
 the  $(w_{p}^{*}, y_{p}^{*})$-policy is optimal in   $ \A$. These results are summarized in Proposition \ref{prop-Fmax}. 
 }

To establish the existence of an equilibrium policy for \eqref{e:sec2game}, it is essential to analyze the behavior  of  $(w_{p}^{*}, y_{p}^{*})$ as $p$ varies, particularly for the case when $a$ is an entrance point. 
 We therefore   consider  the  following set: 
\begin{equation}
\label{e:set-Lambda-defn}
\La: = \{p\in \R: \text{ there exists some } (\wdt w_{p}, \wdt y_{p}) \in \cR \text{ so that } F_{p} (\wdt w_{p}, \wdt y_{p})  > \bar c(b)\}. 
\end{equation}   
In other words, $\La$ is the collection of ``feasible prices $p$''  satisfying Condition  \ref{cond-interior-max}. Note that for any $(w, y) \in \cR$, we have $\lim_{p\to\infty} F_{p}(w, y) = \infty$. This, together  with the assumption  that $c$ is bounded given in Condition \ref{c-cond}(a),  implies that $\La \neq\emptyset$. Moreover, it is obvious that if $p_{1} < p_{2}$ and $p_{1}\in \La$, then $p_{2} \in \La$. 
On the other hand, in view of \eqref{sect 2-e-h-defn},  Condition \ref{c-cond}(a),
and Lemma \ref{lem-h-new},
 for every $p\le 0$ and any $(w, y)\in \cR$, we have 
 \begin{displaymath}
F_{p}(w, y) < \frac{g(y) - g(w) }{\xi(y) - \xi(w)} =\frac{g'(x)}{\xi'(x)}= \frac{\int_{a}^{x} c(y) dM(y)}{M[a, x]} = h_{0}(x)< \bar c(b),
\end{displaymath} 
where $w < x < y$. 
Therefore $p \notin \La$  and hence we have $p_{0}: = \inf \La \ge 0$.  Moreover, for the case when $b$ is finite, using exactly the same argument as above, we can derive   that  $p_{0}   \ge \frac{K}{b-a}$. For convenience of later presentation, we summarize these observations in the following lemma: 
 \begin{lem}\label{lem-La set}
Under Conditions  \ref{diff-cnd} and  \ref{c-cond}(a),  the following assertions hold: 
\begin{enumerate}
  \item[{\em(i)}]  $\La \neq\emptyset$;
  \item[{\em(ii)}]  $p_{0} = \inf \La  \ge \frac{K}{b-a}$ $($with the understanding that $\frac K{b-a} =0$ if $b=\infty$$)$; and
  \item[{\em(iii)}] if $p_{1} < p_{2}$ and $p_{1}\in \La$, then $p_{2} \in \La$. 
\end{enumerate}
\end{lem}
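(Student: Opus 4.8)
The plan is to extract all three assertions from two structural properties of the ratio $F_{p}$ defined in \eqref{e:F_K}: its monotone dependence on the price $p$, and a uniform upper comparison against $\bar c(b)$ at low prices. Writing $F_{p}(w,y)$ as the renewal--reward quotient whose numerator is the per-cycle reward $[g(y)-g(w)] + p\,B\id(w,y) - K$ and whose denominator is the positive expected cycle length $\xi(y)-\xi(w)$, I observe that for each fixed $(w,y)\in\cR$ the map $p\mapsto F_{p}(w,y)$ is affine with positive slope $B\id(w,y)/(\xi(y)-\xi(w)) = (y-w)/(\xi(y)-\xi(w)) > 0$; hence it is strictly increasing and diverges to $+\infty$ as $p\to\infty$.

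Assertions (i) and (iii) then follow with no further work. For (iii), if $p_{1}\in\La$ is witnessed by $(\wdt w_{p_{1}},\wdt y_{p_{1}})$ with $F_{p_{1}}(\wdt w_{p_{1}},\wdt y_{p_{1}}) > \bar c(b)$, then monotonicity in $p$ gives $F_{p_{2}}(\wdt w_{p_{1}},\wdt y_{p_{1}}) \ge F_{p_{1}}(\wdt w_{p_{1}},\wdt y_{p_{1}}) > \bar c(b)$ for every $p_{2} > p_{1}$, so the same pair certifies $p_{2}\in\La$. For (i), fix any $(w,y)\in\cR$; since $c$ is bounded by \cndref{c-cond}(a) we have $\bar c(b)\le c(b) < \infty$, and because $F_{p}(w,y)\to\infty$ as $p\to\infty$ there is a finite $p$ with $F_{p}(w,y) > \bar c(b)$, so $\La\neq\emptyset$.

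The real content is the lower bound (ii), for which I plan to show that every $p\le K/(b-a)$ fails the feasibility test, i.e. $F_{p}(w,y) < \bar c(b)$ for all $(w,y)\in\cR$. Because $a < w < y < b$ forces $y-w < b-a$, any such $p$ makes the impulse contribution strictly negative, $p\,B\id(w,y) - K = p(y-w) - K < \tfrac{K}{b-a}(b-a) - K = 0$ (and trivially so when $p\le 0$); dropping it yields $F_{p}(w,y) < \frac{g(y)-g(w)}{\xi(y)-\xi(w)}$. Applying the Cauchy mean value theorem to the differentiable pair $g,\xi$ on $(w,y)$ --- legitimate since $\xi'(x) = s(x)M[a,x] > 0$ --- produces $x\in(w,y)$ with $\frac{g(y)-g(w)}{\xi(y)-\xi(w)} = \frac{g'(x)}{\xi'(x)} = h_{0}(x) = \frac{\int_{a}^{x} c\,dM}{M[a,x]}$, the last step being \eqref{e:h'-expression}. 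It then remains to bound this quantity by $\bar c(b)$, after which $p\notin\La$ and therefore $p_{0}=\inf\La\ge K/(b-a)$; when $b=\infty$ the same argument restricted to $p\le 0$ gives $p_{0}\ge 0$, matching the convention $K/(b-a)=0$.

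I expect the one genuinely non-mechanical step to be the comparison $h_{0}(x)\le\bar c(b)$, since $\bar c(b)$ is defined by cases. When $M[a,b]=\infty$ and $\bar c(b)=c(b)$, the $M$-weighted average $h_{0}(x)$ of the increasing function $c$ over $[a,x]$ is at most $c(x) < c(b)$ by \cndref{c-cond}(a). When $M[a,b]<\infty$ and $\bar c(b)=\langle c,\pi\rangle$ is the full $\pi$-average, I would instead invoke \lemref{lem-ell-limit at b}, which identifies $h_{0}$ as increasing toward $\bar c(b)$ at $b$, so that the truncated average over $[a,x]$ stays below $\bar c(b)$. Note that strictness of this last bound is not even needed, since the inequality $F_{p}(w,y) < \frac{g(y)-g(w)}{\xi(y)-\xi(w)}$ is already strict; the only delicate point is the uniform validity of $h_{0}(x)\le\bar c(b)$ across both regimes of the speed measure, which is exactly what \lemref{lem-ell-limit at b} supplies.
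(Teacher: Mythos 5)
Your proposal is correct and takes essentially the same route as the paper: (i) and (iii) follow from the strictly increasing affine dependence of $F_{p}(w,y)$ on $p$, and (ii) follows by dropping the strictly negative term $p(y-w)-K$ when $p\le K/(b-a)$, applying the Cauchy mean value theorem to get $F_{p}(w,y)<h_{0}(x)$, and comparing $h_{0}$ with $\bar c(b)$ — exactly the chain of inequalities displayed before Lemma \ref{lem-La set}. One small attribution slip: Lemma \ref{lem-ell-limit at b} gives only the limit $\lim_{x\to b}h_{0}(x)=\bar c(b)$, not that $h_{0}$ is increasing; the monotonicity needed in your second case comes from \eqref{e:h'-expression} together with $c$ increasing (Condition \ref{c-cond}(a)), which is precisely the combination the paper cites and which your first-case averaging argument already uses.
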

\begin{rem}\label{rem-about-condition2.6}
 \begin{itemize}
  \item[(a)] We now  present an equilavent formuation of Condition \ref{c-cond}(b) using the set $\La$ of feasible prices defined in \eqref{e:set-Lambda-defn}. 
  On the one hand, if $\vphi_{\min}: = \min\{\vphi(z): z\in [0, z_{0}]\}\in \La$, then Lemma \ref{lem-La set}(iii) implies that $\vphi(z) \in \La$ for every $z\in [0, z_{0}]$. That is, there exists some $(\wdt w_{\vphi(z)}, \wdt y_{\vphi(z)}) \in \cR$ so that $F_{\vphi(z)}(\wdt w_{\vphi(z)}, \wdt y_{\vphi(z)}) > \bar c(b) $. In view of \eqref{e:F_K} and \eqref{e:reward-zero-policy}, this says that the long-term average reward of the $(\wdt w_{\vphi(z)}, \wdt y_{\vphi(z)})$-policy outperforms the do-nothing policy, yielding  \eqref{eq:fund-assump}.   

On the other hand, if   \eqref{eq:fund-assump} holds for every $z\in [0, z_{0}]$, using \eqref{e:F_K} and \eqref{e:reward-zero-policy} again, we have $\vphi(z) \in \La$ for every $z\in [0, z_{0}]$. This gives   $\vphi_{\min}\in \La$. 

{   Therefore,  Condition \ref{c-cond}(b) is equivalent to  the condition that $\vphi $ is continuous on $[0, z_{0}]$ and that $\vphi_{\min} = \min\{\vphi(z): z\in [0, z_{0}]\}\in \La$.}  

{  \item[(b)] Suppose that $\vphi(0) \notin \La$, then the do-nothing policy $\mathfrak R$ (or any $R \in \AF$) is an equilibrium for the MFG \eqref{e:sec2game}. Indeed, at one hand, we have $\kappa^{\mathfrak R} =0$ and  $J(\mathfrak R, \kappa^{\mathfrak R}) = \bar c(b)$. On the other hand, for any $R\in \A$, thanks to Proposition \ref{prop-Fmax}, we have $J(R, \kappa^{\mathfrak R}) \le F^*_{\vphi(0)}$; furthermore, the assumption that $\vphi(0) \notin \La$ impliest that $F^*_{\vphi(0)} \le \bar c(b)$.  This desmonstrates that $J(R, \kappa^{\mathfrak R}) \le J(\mathfrak R, \kappa^{\mathfrak R})$ for every $R\in \A$, and hence $\mathfrak R$ is an equilibrium for the MFG \eqref{e:sec2game}. 

In this case, the MFG \eqref{e:sec2game}   becomes trivial.  Condition \ref{c-cond}(b) rules out this trivial case and ensures that there exists at least one nontrivial equilibrium for the MFG \eqref{e:sec2game}.}\end{itemize}
\end{rem}

Under Conditions \ref{diff-cnd},   \ref{c-cond}(a), and \ref{3.9-suff-cnd}, for every $p\in \La$, there exists a unique pair  $(w_{p}^{*}, y_{p}^{*}) \in \cR$ so that $F_{p}(w_{p}^{*}, y_{p}^{*})  = \sup_{(w, y) \in \cR} F_{p}(w, y)$. 
 Consequently, we can consider the vector-valued function $\Psi: \La \mapsto \cR$ defined by
  \begin{equation}\label{e:Psi(p)-fn}
\Psi (p) := (w_{p}^{*}, y_{p}^{*})  = \argmax_{(w, y) \in \cR} F_{p}(w, y).
\end{equation}

\begin{prop}\label{prop-wp-yp-cont}
Assume Conditions \ref{diff-cnd},   \ref{c-cond}(a), and  \ref{3.9-suff-cnd} hold. 
Then the function $\Psi$ defined in \eqref{e:Psi(p)-fn} is continuous. 
\end{prop}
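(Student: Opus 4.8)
The plan is to establish continuity of $\Psi$ by a sequential (Berge-type) argument: fix $p\in\La$ and an arbitrary sequence $p_n\to p$ with $p_n\in\La$, write $(w_n,y_n):=\Psi(p_n)=(w_{p_n}^*,y_{p_n}^*)$, and show $(w_n,y_n)\to\Psi(p)=(w_p^*,y_p^*)$. Two structural facts drive the proof. First, by \eqref{e:F_K} the map $q\mapsto F_q(w,y)=\frac{Bg(w,y)+q\,B\id(w,y)-K}{B\xi(w,y)}$ is, for each fixed $(w,y)\in\cR$, affine and strictly increasing in $q$ with positive slope $B\id(w,y)/B\xi(w,y)$; hence the value $\lambda_q:=\sup_{\cR}F_q=F_q(\Psi(q))$ is a supremum of increasing affine functions, so it is nondecreasing, convex, and therefore continuous on $\La$. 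Since $p_n\in\La$ forces $\lambda_{p_n}>\bar c(b)$ and $\lambda$ is continuous, on a compact neighbourhood $N$ of $p$ in $\La$ we may fix $\eta>0$ with $\lambda_q\ge\bar c(b)+\eta$ for all $q\in N$. Second, I will use the optimality characterization from \propref{prop-Fmax}, which identifies $w_q^*$ and $y_q^*$ as points where the function $h_q$ of \eqref{e-h-fn-defn} attains the common value $\lambda_q$, i.e. $h_q(w_q^*)=h_q(y_q^*)=\lambda_q$ (with the $w$-relation replaced by boundary optimality when $a$ is an entrance point and $w_q^*=a$).

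The heart of the argument is an a priori confinement of $\{(w_n,y_n)\}$ to a compact subset of $\cR$, obtained by excluding the three ways a point can escape to $\partial\cR$. (i) The gap cannot collapse: by \eqref{e:F_K}, as $y-w\to0$ the numerator tends to $-K<0$ while $B\xi(w,y)\to0^{+}$, so $F_{p_n}(w_n,y_n)\to-\infty$, contradicting $F_{p_n}(w_n,y_n)=\lambda_{p_n}\ge\bar c(b)+\eta$; hence $y_n-w_n\ge\delta$ for some $\delta>0$. (ii) The upper threshold stays off $b$: writing $h_q(x)=h_0(x)+q/\xi'(x)$ and using $\lim_{x\to b}\xi'(x)=\infty$ from \eqref{e-sM-infty} together with $\lim_{x\to b}h_0(x)=\bar c(b)$ (\lemref{lem-ell-limit at b}), we get $h_q(x)\to\bar c(b)$ as $x\to b$, uniformly for $q\in N$; since $\lambda_{p_n}=h_{p_n}(y_n)\ge\bar c(b)+\eta$, the $y_n$ must remain in a region $\{x\le y_0\}$ with $y_0<b$. (iii) When $a$ is natural the lower threshold stays off $a$: here $q>0$ on $N\subset\La$ and $\xi'(x)\to0$ as $x\to a$, so $h_q(x)=h_0(x)+q/\xi'(x)\to+\infty$; because $\lambda_{p_n}=h_{p_n}(w_n)$ is bounded on $N$, the $w_n$ are bounded away from $a$. (When $a$ is an entrance point the edge $\{w=a\}$ lies in $\E\times\E$, so no separation from $a$ is required and a limit with $w_p^*=a$ is admissible.) Combining (i)--(iii) confines $\{(w_n,y_n)\}$ to a fixed compact $\K\subset\cR$.

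With compactness in hand, I identify every limit point. Passing to a subsequence, let $(w_{n_k},y_{n_k})\to(w^\circ,y^\circ)\in\K\subset\cR$. For each $(w,y)\in\cR$ optimality gives $F_{p_{n_k}}(w_{n_k},y_{n_k})\ge F_{p_{n_k}}(w,y)$, and letting $k\to\infty$ with the joint continuity of $(w,y,q)\mapsto F_q(w,y)$ yields $F_p(w^\circ,y^\circ)\ge F_p(w,y)$. Thus $(w^\circ,y^\circ)$ maximizes $F_p$ over $\cR$, and the uniqueness of the maximizer stated under \cndref{diff-cnd}, \cndref{c-cond}(a), and \cndref{3.9-suff-cnd} forces $(w^\circ,y^\circ)=\Psi(p)$. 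As every subsequence of $\{(w_n,y_n)\}$ admits a further subsequence converging to the same limit $\Psi(p)$, the full sequence converges to $\Psi(p)$, which proves continuity of $\Psi$ at the arbitrary point $p$.

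The main obstacle is step (ii)--(iii), the a priori confinement ruling out $y_n\to b$ and (in the natural case) $w_n\to a$. This is precisely where the feasibility gap $\lambda_q>\bar c(b)$, the natural-boundary condition \eqref{e-sM-infty}, and the limit $\lim_{x\to b}h_0(x)=\bar c(b)$ of \lemref{lem-ell-limit at b} are indispensable, and where one must verify that these boundary limits of $h_q$ hold uniformly in $q$ over the neighbourhood $N$. The concavity and monotonicity hypotheses of \cndref{3.9-suff-cnd} enter indirectly, through the unimodal shape of $h_q$ that underlies the two-point first-order characterization $h_q(w_q^*)=h_q(y_q^*)=\lambda_q$ furnished by \propref{prop-Fmax}.
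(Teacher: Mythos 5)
Your overall route---continuity of the value map $p\mapsto F_p^*$, a priori confinement of the maximizers $(w_n,y_n)=\Psi(p_n)$ to a compact subset of $\cR$, then identification of every limit point via uniqueness of the maximizer---is genuinely different from the paper's proof, which instead applies the implicit function theorem at interior maximizers (Lemma~\ref{lem1-prop41-pf}) and a separate contradiction argument for the entrance case $w_p^*=a$ (Lemma~\ref{lem2-prop41-pf}). Your step (ii), the value-continuity argument, and the final limit-point identification are sound. However, step (iii) is wrong, and this leaves a genuine gap in exactly the natural-boundary case. You claim that $\xi'(x)\to 0$ as $x\to a$, hence $h_q(x)=h_0(x)+q/\xi'(x)\to+\infty$. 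Both assertions are false. First, $z_0=\sup_{x\in\I}1/\xi'(x)<\infty$ under Condition~\ref{diff-cnd} (see \eqref{eq:z0defn}); thus $\xi'\ge 1/z_0>0$ on all of $\I$, so $\xi'$ cannot tend to $0$ anywhere---this is the very fact behind the Lipschitz constants in Lemmas~\ref{lem-p-Fp*cont} and \ref{lem-p-hp-cont}. Second, when $a$ is natural one has $\xi(a+)=-\infty$ while $(a,x_0]$ has finite length, so in fact $\limsup_{x\to a}\xi'(x)=+\infty$; e.g.\ in the paper's Example~\ref{exam-logistic}, $\xi'(x)=s(x)M(0,x]\sim C x^{-1}\to\infty$ as $x\downarrow 0$. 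Consequently $h_q(x)=\frac{\int_a^x c\,dM}{M[a,x]}+\frac{q}{\xi'(x)}$ does \emph{not} blow up at $a$: the first term tends to $c(a)$ and the second has $\liminf$ equal to $0$. So your argument does not exclude $w_n\to a$ when $a$ is natural, and without that confinement the compactness step collapses: a limit point with $w^\circ=a$ lies outside $\cR$ in the natural case, where $\E=(a,b)$.

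The gap is repairable, but by a different mechanism---essentially the machinery of the paper's Lemma~\ref{lem2-prop41-pf}. If $w_n\to a$ along a subsequence, then from $F_{p_n}^*=h_{p_n}(w_n)$ (Proposition~\ref{prop-Fmax}(a)(i), applicable since $a$ natural forces $w_{p_n}^*>a$), Lemma~\ref{lem-p-hp-cont} gives $|h_{p_n}(w_n)-h_p(w_n)|\le z_0|p_n-p|\to 0$ and Lemma~\ref{lem-p-Fp*cont} gives $F_{p_n}^*\to F_p^*$, hence $h_p(w_n)\to F_p^*$; but $h_p$ is strictly increasing on $(a,y_p)$ by Lemma~\ref{lem-h-new} and $w_p^*\in(a,y_p)$, so $h_p(w_n)\to h_p(a+)<h_p(w_p^*)=F_p^*$, a contradiction. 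A second, smaller flaw: step (i) cannot be run first as stated, because the claim that the numerator of $F_{p_n}$ tends to $-K$ as $y_n-w_n\to 0$ fails near $b$ (and near a natural $a$), where $g'$ and $\xi'$ are unbounded and $Bg(w_n,y_n)$ need not vanish; (i) is valid only after (ii) and the corrected (iii) have confined $(w_n,y_n)$ to a region on which $g$ and $\xi$ are uniformly continuous. With these two repairs your proof goes through.
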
 

\begin{proof}
We break the proof into two parts by considering the case of $w_p^* > a$ in the next lemma and $w_p^* = a$, which can only occur when $a$ is an entrance boundary, in \lemref{lem2-prop41-pf}.
\end{proof}

\begin{lem}\label{lem1-prop41-pf}
Assume Conditions \ref{diff-cnd},   \ref{c-cond}(a), and  \ref{3.9-suff-cnd} hold. Let $p\in (p_0,\infty)$ where $p_0 = \inf \La$. If $w_{p}^{*} > a$, then $\Psi$ is continuously differentiable at $p$.  
\end{lem}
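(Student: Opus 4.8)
The plan is to recognize $(w_p^*,y_p^*)$ as an interior critical point of $F_p$ and to obtain its $C^1$ dependence on $p$ from the implicit function theorem applied to the stationarity system. I would work throughout with the explicit renewal--reward form $F_p(w,y)=\frac{[g(y)+py]-[g(w)+pw]-K}{\xi(y)-\xi(w)}$, with $g$ and $\xi$ as in \eqref{e:g-defn} and \eqref{e-xi}. Under \cndref{diff-cnd} and \cndref{c-cond}(a) the densities $s,m$ are respectively $C^1$ and continuous and $c$ is continuous, so $g,\xi\in C^2(\I)$; hence $F_p$ is $C^2$ in $(w,y)$ and affine in $p$, and the map $\Phi(w,y,p):=(\partial_w F_p(w,y),\partial_y F_p(w,y))$ is jointly $C^1$ on $\cR\times(p_0,\infty)$.

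First I would record the stationarity conditions. A direct differentiation gives $\partial_w F_p=\frac{\xi'(w)}{D}\big(F_p-h_p(w)\big)$ and $\partial_y F_p=\frac{\xi'(y)}{D}\big(h_p(y)-F_p\big)$, where $D:=\xi(y)-\xi(w)>0$ and $h_p$ is as in \eqref{e-h-fn-defn}; since $\xi'>0$, the equation $\Phi(w_p^*,y_p^*,p)=0$ is equivalent to the tangency relations $h_p(w_p^*)=h_p(y_p^*)=F_p(w_p^*,y_p^*)=:F_p^*$, which hold at the unique interior maximizer from \propref{prop-Fmax}. Differentiating once more and using that the first derivatives vanish at the critical point, the off-diagonal entry of the Hessian is found to vanish identically, leaving $\mathrm{Hess}\,F_p(w_p^*,y_p^*)=\mathrm{diag}\big(-\tfrac{\xi'(w_p^*)}{D}h_p'(w_p^*),\ \tfrac{\xi'(y_p^*)}{D}h_p'(y_p^*)\big)$. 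Thus nonsingularity (indeed negative definiteness, the second-order condition for a maximum) is equivalent to $h_p'(w_p^*)>0>h_p'(y_p^*)$.

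The main obstacle is establishing these strict sign conditions, i.e.\ the nondegeneracy of the maximizer. The key tool is the identity $h_p'(x)=\frac{2}{\sigma^2(x)\xi'(x)}\big(\Theta_p(x)-h_p(x)\big)$, valid for all $x\in\I$ with $\Theta_p:=c+p\mu$, which follows from $Ag=c$, $A\xi=1$ and $g'(x)+p=h_p(x)\xi'(x)$; at the maximizer it reduces the required inequalities to $\Theta_p(w_p^*)>F_p^*>\Theta_p(y_p^*)$. My plan is twofold: (i) show $F_p^*<\max_{[w_p^*,y_p^*]}h_p$ strictly, which is where the fixed cost $K>0$ is essential, since dropping the $-K$ term turns $F_p$ into a $\xi'$-weighted average of $h_p$ over $[w,y]$ and hence bounds it by that maximum; and (ii) invoke \cndref{3.9-suff-cnd}, under which $\Theta_p$ is quasiconcave (strictly increasing on $(a,\hat x_{\mu,c})$, concave beyond), so that $h_p$—which by the displayed identity rises where $h_p<\Theta_p$ and falls where $h_p>\Theta_p$—meets the level $F_p^*$ transversally, ascending at $w_p^*$ and descending at $y_p^*$. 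I expect the delicate point to be ruling out a tangential crossing such as $h_p'(w_p^*)=0$: uniqueness of the maximizer alone does not preclude a degenerate critical point (as for $-w^4$), so this step must rest on the structural optimality analysis of \propref{prop-Fmax} rather than on the second-order condition alone.

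Finally, with $\Phi\in C^1$, $\Phi(w_p^*,y_p^*,p)=0$, and $\partial_{(w,y)}\Phi=\mathrm{Hess}\,F_p$ nonsingular, the implicit function theorem yields a $C^1$ branch $p'\mapsto(w(p'),y(p'))$ on a neighborhood of $p$ solving the stationarity system with $(w(p),y(p))=(w_p^*,y_p^*)$. Because $w_p^*>a$ strictly and the branch is continuous, $w(p')>a$ for $p'$ near $p$, so the solution remains an interior critical point; by uniqueness of the maximizer it coincides with $\Psi(p')=(w_{p'}^*,y_{p'}^*)$ from \eqref{e:Psi(p)-fn}. Hence $\Psi$ is continuously differentiable at $p$.
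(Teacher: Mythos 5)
Your strategy is the same as the paper's: the paper applies the implicit function theorem to the system $\wdt F(p,w,y)=\big(h_p(w)-F_p(w,y),\,h_p(y)-F_p(w,y)\big)$, whose Jacobian in $(w,y)$ at the maximizer is $\mathrm{diag}\big(h_p'(w_p^*),\,h_p'(y_p^*)\big)$ thanks to \eqref{e-1st-order-condition}; your system $\Phi=\nabla_{(w,y)}F_p$ has the same zero set, and at the zero its Jacobian differs from the paper's only by the positive factors $\xi'(w_p^*)/(\xi(y_p^*)-\xi(w_p^*))$ and $\xi'(y_p^*)/(\xi(y_p^*)-\xi(w_p^*))$. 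Your computations are correct: the stationarity identities, the vanishing off-diagonal Hessian entries at the critical point, and the identity $h_p'=\frac{m}{M[a,\cdot]}(r_p-h_p)$, which is \eqref{e:h'-expression}. So, like the paper, you have reduced everything to the strict sign conditions $h_p'(w_p^*)>0>h_p'(y_p^*)$.

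The genuine gap is the step you flag but never close: you do not prove those sign conditions, and your plan (i)--(ii) cannot do so as stated. Step (i) (which is \eqref{e:F<ell}, using $K>0$) only places $F_p^*$ strictly below $\sup_{x}h_p(x)$; it says nothing about the derivative of $h_p$ at the two crossing points. Step (ii) is a restatement of the desired transversality, and, as you concede, neither uniqueness of the maximizer nor \propref{prop-Fmax} (which supplies only existence, uniqueness, and \eqref{e-1st-order-condition}) rules out a degenerate crossing. The tool that closes this is \lemref{lem-h-new}, stated in the appendix and available to you: with $y_p:=\min\{x\in\I: h_p'(x)=0\}$ as in \eqref{e-y-hat-p-defn}, the function $h_p$ is strictly increasing on $(a,y_p)$ and strictly decreasing on $(y_p,b)$. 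Combined with \eqref{e-1st-order-condition}, since $h_p(w_p^*)=h_p(y_p^*)$ with $w_p^*<y_p^*$, the two points cannot lie on one side of $y_p$ (strict monotonicity would force distinct values), hence $w_p^*<y_p<y_p^*$. Then $h_p'(w_p^*)>0$ because $h_p'$ is continuous, nonnegative, and nonvanishing on $(a,y_p)$ by the very definition of $y_p$; and $h_p'(y_p^*)<0$ by the strict decrease on $(y_p,b)$ — your $-w^4$ worry is legitimate for this second sign if one uses only the words ``strictly decreasing,'' but it is resolved by the analysis behind \lemref{lem-h-new}: a zero of $h_p'$ at some $x_1\in(y_p,b)$ forces $r_p(x_1)=h_p(x_1)$ by \eqref{e:h'-expression}, which the concavity structure of $r_p$ under \cndref{3.9-suff-cnd} excludes. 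This citation is exactly how the paper's one-line conclusion works, and without it (or an equivalent re-derivation, which would amount to re-proving \lemref{lem-h-new}) your argument does not establish invertibility of the Jacobian, so the implicit function theorem cannot be invoked. A secondary, smaller soft spot: identifying the IFT branch with $\Psi(p')$ ``by uniqueness of the maximizer'' is not valid as written — an interior critical point need not be a maximizer, and the maximizer for nearby $p'$ is only forced into the IFT neighborhood once one knows $\Psi$ is continuous at $p$ (which a contradiction argument in the spirit of \lemref{lem2-prop41-pf} supplies). The paper is equally terse on this identification, so I flag it only as a point to tighten; the substantive missing ingredient in your proposal is \lemref{lem-h-new}.
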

\begin{proof}
We consider the function $\wdt{F}:   (p_0,\infty) \times \cR \mapsto \R^{2}$ defined by 
\begin{displaymath}
\wdt{F}(p,w,y): = \begin{pmatrix} h_{p}(w) - F_{p}(w, y) \\ h_{p}(y) - F_{p}(w, y)
              \end{pmatrix},
\end{displaymath} where the functions $F_{p} $ and $h_{p}$ are defined in \eqref{e:F_K} and \eqref{e-h-fn-defn}, respectively. Note that $\wdt{F}$ is continuously differentiable with  $\wdt{F}(p, w^{*}_{p}, y^{*}_{p}) =0$ and, thanks to \eqref{e-1st-order-condition}, we have 
\begin{align*}
\mathscr J& (p,  w_{p}^{*}, y_{p}^{*}):  = \begin{pmatrix} \partial_{w}\wdt F_{1} & \partial_{y}\wdt F_{1} \\  \partial_{w}\wdt F_{2} & \partial_{y}\wdt F_{2}\end{pmatrix}(p, w^{*}_{p}, y^{*}_{p}) = \begin{pmatrix}
h_{p}'(w_{p}^{*}) & 0 \\ 0 & h_{p}'(y_{p}^{*})
\end{pmatrix}.
\end{align*} 
 In view of the monotonicity of $h_{p}$ derived in 
Lemma \ref{lem-h-new}, we have $h'_{p}( w_{p}^{*} ) > 0$  and $h'_{p}( y_{p}^{*} ) < 0$.
Therefore $\mathscr J(p, w_{p}^{*}, y_{p}^{*})$ is an invertible matrix and hence we can apply the implicit function theorem to conclude that there exists an open   neighborhood $U$ containing  $p$ and a unique continuously differentiable function $\psi: U\mapsto \R^{2}$ such that $\psi(p) = ( w_{p}^{*}, y_{p}^{*}) = \Psi(p)$ and $\wdt F_{1}(x,\psi(x)) =0, \wdt F_{2}(x,\psi(x)) =0$ for $x\in U$.  In particular, this gives the continuous differentiability of $\Psi$ as desired. 
\end{proof}

When $a$ is an entrance point, it is possible that $w_{p}^{*} =a$ and $h_{p}(a) \ge F^{*}_{p} =  h_{p}(y_{p}^{*})$. Consequently we cannot directly apply the implicit function theorem to derive the continuity of the function $\Psi$ as in the proof of Lemma \ref{lem1-prop41-pf}.  To address this subtle issue, we begin by examining certain properties of   $F_{p}^{*}$ and the function $h_{p}$ when $p$ varies. 

\begin{lem}\label{lem-p-Fp*cont}
Suppose Conditions  \ref{diff-cnd} and \ref{c-cond}(a) hold. Let $$F_{p}^{*} : = F_{p}(w_{p}^{*}, y_{p}^{*})  = \sup_{(w, y) \in \cR} F_{p}(w, y).$$ Then the function $p\mapsto F_{p}^{*}, p \in \La$ is Lipschitz continuous with Lipschitz constant $z_0$: \begin{displaymath}
| F_{p_{2}}^{*} - F_{p_{1}}^{*}| \le z_{0}\, |p_{2}-p_{1}|, \quad \forall p_{1}, p_{2}\in\La.
\end{displaymath}   
\end{lem}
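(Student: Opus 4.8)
The plan is to exploit the fact that, for each fixed pair $(w,y)\in\cR$, the map $p\mapsto F_{p}(w,y)$ is affine, with slopes that are bounded uniformly in $(w,y)$ by the constant $z_{0}$. Once this uniform one-point Lipschitz estimate is in hand, the Lipschitz continuity of the upper envelope $p\mapsto F_{p}^{*}=\sup_{(w,y)\in\cR}F_{p}(w,y)$ follows from the standard principle that a supremum of functions sharing a common Lipschitz constant is itself Lipschitz with that constant.

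First I would isolate the $p$-dependence of $F_{p}$. Recalling the definition in \eqref{e:F_K}, the price $p$ enters the renewal quotient only through the linear term $p\,(y-w)$ in the numerator, the denominator $\xi(y)-\xi(w)$ being independent of $p$. Hence for any $p_{1},p_{2}\in\La$ and any fixed $(w,y)\in\cR$,
\begin{equation*}
F_{p_{2}}(w,y)-F_{p_{1}}(w,y)=(p_{2}-p_{1})\,\frac{y-w}{\xi(y)-\xi(w)}=(p_{2}-p_{1})\,\frac{B\id(w,y)}{B\xi(w,y)}.
\end{equation*}

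The key step is then to bound this slope by $z_{0}$, uniformly over $\cR$. Writing $\xi(y)-\xi(w)=\int_{w}^{y}\xi'(v)\,dv$ and invoking the definition of $z_{0}$ in \eqref{eq:z0defn}, which yields $\xi'(v)\ge z_{0}^{-1}$ for every $v\in\I$ (recall $\xi'>0$ and $z_{0}<\infty$ under Condition \ref{diff-cnd}), I obtain $\xi(y)-\xi(w)\ge z_{0}^{-1}(y-w)>0$ and therefore $0<\frac{y-w}{\xi(y)-\xi(w)}\le z_{0}$. Consequently $|F_{p_{2}}(w,y)-F_{p_{1}}(w,y)|\le z_{0}\,|p_{2}-p_{1}|$ for every $(w,y)\in\cR$. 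To pass to the envelope, let $(w_{p_{1}}^{*},y_{p_{1}}^{*})$ and $(w_{p_{2}}^{*},y_{p_{2}}^{*})$ be the maximizers attaining $F_{p_{1}}^{*}$ and $F_{p_{2}}^{*}$; since $(w_{p_{2}}^{*},y_{p_{2}}^{*})$ is a competitor in the problem defining $F_{p_{1}}^{*}$,
\begin{equation*}
F_{p_{2}}^{*}-F_{p_{1}}^{*}\le F_{p_{2}}(w_{p_{2}}^{*},y_{p_{2}}^{*})-F_{p_{1}}(w_{p_{2}}^{*},y_{p_{2}}^{*})\le z_{0}\,|p_{2}-p_{1}|,
\end{equation*}
and the reverse inequality follows by interchanging $p_{1}$ and $p_{2}$, giving the claim. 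If one prefers not to assume the suprema are attained, the same two-sided comparison goes through verbatim using $\varepsilon$-approximate maximizers and letting $\varepsilon\downarrow0$.

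I expect the only point genuinely requiring care to be the uniform slope bound: the Lipschitz constant is pinned to $z_{0}$ precisely because $z_{0}^{-1}$ is a uniform lower bound for $\xi'$ across the entire interval $\I$, and this uniformity must persist up to the boundaries, where it is underwritten by the finiteness $z_{0}<\infty$ supplied under Condition \ref{diff-cnd}. Everything else is the routine ``envelope of uniformly Lipschitz functions'' argument.
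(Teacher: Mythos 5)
Your proposal is correct and takes essentially the same route as the paper: both proofs compare $F_{p_1}^*$ and $F_{p_2}^*$ by evaluating the two affine-in-$p$ functions at a maximizer of one of them, and then bound the common slope $\frac{y-w}{\xi(y)-\xi(w)}$ by $z_0$. The only cosmetic difference is in that last bound, where the paper invokes the generalized mean value theorem ($\frac{y-w}{\xi(y)-\xi(w)} = \frac{1}{\xi'(\theta)} \le z_0$) while you integrate the uniform estimate $\xi'(v) \ge z_0^{-1}$ over $[w,y]$ --- two equivalent ways of saying the same thing.
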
 \begin{proof}
Let $p_{1}, p_{2} \in \La$ with $p_{1} < p_{2}$ and $(w^{*}_{p_{2}}, y^{*}_{p_{2}})$ be a maximizing pair for the function $F_{p_{2}}$. Then we have \begin{align*} 
0 & \le   F_{p_{2}}^{*} - F_{p_{1}}^{*} \le F_{p_{2}}(w^{*}_{p_{2}}, y^{*}_{p_{2}})- F_{p_{1}}(w^{*}_{p_{2}}, y^{*}_{p_{2}}) \\
 &= (p_{2} -p_{1}) \frac{y^{*}_{p_{2}}-w^{*}_{p_{2}}}{\xi(y^{*}_{p_{2}})- \xi(w^{*}_{p_{2}})}= \frac{p_{2} -p_{1}}{\xi'(\theta)} \le z_{0}(p_{2} -p_{1}),
\end{align*} where $\theta \in (w^{*}_{p_{2}}, y^{*}_{p_{2}})$.  Thus the lemma is proved. 
\end{proof}
Using a similar argument, we have 
\begin{lem}\label{lem-p-hp-cont}
Suppose  Condition  \ref{diff-cnd} holds, then the function $p\mapsto h_{p}(x)$ is   Lipschitz continuous with constant $z_0$, uniformly in $x$: 
\begin{displaymath}
\sup_{x\in \I} |h_{p_{1}}(x) - h_{p_{2}}(x) | \le z_{0}\, |p_{1} - p_{2}|, \quad \forall p_{1}, p_{2} \in \R. 
\end{displaymath} 
\end{lem}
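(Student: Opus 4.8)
The plan is to exploit the fact that in the difference $h_{p_1}(x)-h_{p_2}(x)$ the dependence on $x$ through the running‑reward potential decouples completely from the dependence on $p$. Recalling the definition \eqref{e-h-fn-defn}, namely $h_p(x)=\bigl(g'(x)+p\bigr)/\xi'(x)$, I would first simply subtract the two expressions and observe that the numerators share the common summand $g'(x)$. Since both functions have the \emph{same} denominator $\xi'(x)$, this term cancels, leaving the clean identity
\begin{equation*}
h_{p_1}(x)-h_{p_2}(x)=\frac{\bigl(g'(x)+p_1\bigr)-\bigl(g'(x)+p_2\bigr)}{\xi'(x)}=\frac{p_1-p_2}{\xi'(x)}.
\end{equation*}
This is the entire structural content of the lemma; unlike \lemref{lem-p-Fp*cont}, no mean‑value argument over a maximizing pair is even needed here, because the cancellation is exact and pointwise in $x$.

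Next I would take absolute values and invoke the definition \eqref{eq:z0defn} of $z_0$. Under \cndref{diff-cnd}, the potential $\xi$ is strictly increasing with $\xi'(x)=s(x)M[a,x]>0$, so $|\xi'(x)|=\xi'(x)$ and the two expressions for $z_0$ in \eqref{eq:z0defn} agree. By that definition, $1/|\xi'(x)|\le z_0$ for every $x\in\I$. Hence
\begin{equation*}
|h_{p_1}(x)-h_{p_2}(x)|=\frac{|p_1-p_2|}{|\xi'(x)|}\le z_0\,|p_1-p_2|,
\end{equation*}
and since this bound holds with the same constant $z_0$ at every point, taking the supremum over $x\in\I$ yields the claimed uniform Lipschitz estimate for all $p_1,p_2\in\R$.

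There is essentially no obstacle in this argument; the only point that warrants a word of care is the justification that $\xi'(x)>0$ so that the supremum defining $z_0$ is the correct uniform bound on $1/|\xi'|$, and that $z_0<\infty$, which is guaranteed under \cndref{diff-cnd} as noted just before \lemref{lem2-transversality} (with reference to \cite{HelmSZ:25}). The proof thus reduces to the single exact cancellation above followed by the definitional bound, which is why the text introduces it with ``Using a similar argument'' after the proof of \lemref{lem-p-Fp*cont}: both lemmas trade on the same observation that differences in $p$ are amplified by at most $\sup_x 1/\xi'(x)=z_0$.
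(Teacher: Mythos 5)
Your proof is correct and is essentially the paper's intended argument: the paper gives no explicit proof, stating only that the lemma follows ``using a similar argument'' to \lemref{lem-p-Fp*cont}, and that argument is precisely your exact cancellation of $g'(x)$ followed by the definitional bound $1/\xi'(x)\le z_0$ from \eqref{eq:z0defn}. Indeed, your observation that the cancellation is exact and pointwise (so no mean-value step over a maximizing pair is needed, unlike in \lemref{lem-p-Fp*cont}) correctly identifies why this case is even simpler.
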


\begin{lem}\label{lem2-prop41-pf}
Assume Conditions \ref{diff-cnd},   \ref{c-cond}(a), and  \ref{3.9-suff-cnd} hold. Let $p\in \La$. If $w_{p}^{*} = a$, 
 then $\Psi$ is continuous at $p$.
\end{lem}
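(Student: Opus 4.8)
The plan is to prove continuity at such a $p$ by a compactness-plus-uniqueness argument of Berge type, refined by a case split on the boundary value $h_p(a)$. Since $w_p^*=a$ can occur only when $a$ is an entrance boundary, the admissibility constraint is $w\ge a$, and the boundary optimality condition reads $h_p(a)\ge F_p^*=h_p(y_p^*)$. Here the right-hand equation $h_p(y_p^*)=F_p^*$ holds because $y_p^*$ is interior (the natural boundary $b$ is unattainable, so the $y$-constraint never binds), and I will use throughout that $h_p$ is unimodal, strictly increasing then strictly decreasing, as supplied by \lemref{lem-h-new} under \cndref{3.9-suff-cnd}; in particular an interior maximizer has $w_p^*$ on the increasing branch and $y_p^*$ on the decreasing branch.

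First I would record the coercivity that confines the maximizers as $p'\to p$. Writing $h_{p'}(x)=h_0(x)+p'/\xi'(x)$ and using $\xi'(x)\to\infty$ together with $h_0(x)\to\bar c(b)$ as $x\to b$ (\lemref{lem-ell-limit at b}), one sees that $h_{p'}(x)\to\bar c(b)$ as $x\to b$, uniformly for $p'$ in a bounded neighborhood of $p$. Since $p\in\La$ gives $F_p^*>\bar c(b)$ and $F_{p'}^*\to F_p^*$ by \lemref{lem-p-Fp*cont}, the identity $h_{p'}(y_{p'}^*)=F_{p'}^*$ forces $y_{p'}^*$ to stay in a subinterval of $\I$ bounded away from $b$; consequently $w_{p'}^*\in[a,y_{p'}^*)$ stays bounded as well. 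Moreover the fixed cost $K>0$ prevents collapse: if $y_{p'}^*-w_{p'}^*\to 0$ then, writing $F_{p'}(w,y)$ as a bounded first term minus $K/(\xi(y)-\xi(w))$, the second term tends to $-\infty$ while the first stays bounded, contradicting $F_{p'}^*>\bar c(b)$. Hence the maximizers $(w_{p'}^*,y_{p'}^*)$ remain in a fixed compact subset of $\cR$.

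Next I would split on $h_p(a)$. If $h_p(a)>F_p^*$, then \lemref{lem-p-Fp*cont} and \lemref{lem-p-hp-cont} give $h_{p'}(a)>F_{p'}^*$ for $p'$ near $p$; on the increasing branch $h_{p'}(w)>h_{p'}(a)>F_{p'}^*$ for every $w>a$, so the interior first-order condition $h_{p'}(w_{p'}^*)=F_{p'}^*$ is impossible, whence $w_{p'}^*=a=w_p^*$ and only the $y$-component varies. If instead $h_p(a)=F_p^*$ and $w_{p'}^*>a$ along some $p'\to p$, then $0<h_{p'}(w_{p'}^*)-h_{p'}(a)=F_{p'}^*-h_{p'}(a)\to F_p^*-h_p(a)=0$, and the strict monotonicity of $h_p$ near $a$ converts this vanishing value gap into $w_{p'}^*\to a=w_p^*$.

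Finally I would close with the Berge step. Given any $p'\to p$, the compactness above lets me pass to a subsequence with $(w_{p'}^*,y_{p'}^*)\to(w_\infty,y_\infty)\in\cR$; joint continuity of $(p,w,y)\mapsto F_p(w,y)$ on this compact set gives $F_p(w_\infty,y_\infty)=\lim F_{p'}^*=F_p^*$, and uniqueness of the maximizer forces $(w_\infty,y_\infty)=(w_p^*,y_p^*)$. Since every subsequential limit equals $(w_p^*,y_p^*)$ and the maximizers lie in a compact set, the whole family converges, proving that $\Psi$ is continuous at $p$. The delicate point is the transition case $h_p(a)=F_p^*$, where the constraint is only marginally binding and the implicit function theorem is unavailable; there the crux is exactly the passage from the vanishing value gap $F_{p'}^*-h_{p'}(a)\to0$ to the vanishing argument gap $w_{p'}^*-a\to0$, which relies on the strict local monotonicity of $h_p$ furnished by \lemref{lem-h-new}.
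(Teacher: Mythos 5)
Your proposal is correct, but it follows a genuinely different route from the paper's. The paper proves the lemma by contradiction: assuming $\Psi$ is discontinuous at $p$, it extracts a sequence $p_n\to p$ in $\La$ whose maximizers are displaced by a fixed $\e_0$, and then runs a case analysis (the $w$-component displaced from $a$; the $y$-component landing above $y_p^*$, between the peak $y_{p}$ of $h_p$ and $y_p^*$, or below the peak) in which the strict unimodality of $h_p$ (\lemref{lem-h-new}), the first-order conditions \eqref{e-1st-order-condition}--\eqref{e2-1st-order-condition}, and the two uniform Lipschitz estimates (\lemref{lem-p-Fp*cont} and \lemref{lem-p-hp-cont}) force $|F_{p_n}^*-F_p^*|$ to stay bounded away from zero, contradicting \lemref{lem-p-Fp*cont}. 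You instead give a Berge-type maximum-theorem argument: first confine the maximizers near $p$ to a compact subset of $\cR$ (coercivity at $b$ via $h_{p'}\to\bar c(b)$ uniformly in $p'$, combined with $F_{p'}^*\to F_p^*>\bar c(b)$, plus the no-collapse estimate driven by the fixed cost $K>0$), then combine joint continuity of $(p',w,y)\mapsto F_{p'}(w,y)$ on that compact set with uniqueness of the maximizer (\propref{prop-Fmax}(a)) to identify every subsequential limit with $\Psi(p)$. The ingredient lists genuinely differ: the paper's quantitative case analysis never needs compactness of the maximizer family (it tolerates $y_{p_n}^*\to b$) nor any appeal to uniqueness beyond the well-definedness of $\Psi$, whereas your soft argument leans on the confinement step, on uniqueness, and on continuity of $\xi$ and $g$ (hence of $F_p$) up to $w=a$ --- legitimate here since $a$ must be an entrance boundary, so $\xi$ is bounded below. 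What your route buys is modularity and generality: it works verbatim whenever maximizers are compactly confined and unique, and it exposes the result as an instance of the maximum theorem. Two small remarks: your middle step (the case split on $h_p(a)>F_p^*$ versus $h_p(a)=F_p^*$) is logically redundant, since the final Berge step alone already yields $w_{p'}^*\to a$; and in the marginal case the terse passage from the vanishing value gap to $w_{p'}^*\to a$ should be written out as you indicate, e.g.\ via
\begin{equation*}
h_{p'}(w_{p'}^*)-h_{p'}(a)\;\ge\; h_{p}(a+\e_0)-h_{p}(a)-2z_0|p'-p|,
\end{equation*}
which is exactly the estimate the paper uses in its first case.
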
 \begin{proof} We use a contradiction argument. Recall 
 from Lemma \ref{lem-h-new}   that the function $h_{p}$ is strictly increasing on $(a, y_{p})$ and strictly decreasing on $(  y_{p}, b)$, where $y_{p} \in (a, b)$ is defined in \eqref{e-y-hat-p-defn}.
 Suppose $\Psi$ is not continuous at $p$, then there exists some $\e_{0} > 0$ so that for every $n\in \NN$, there exists some $p_{n}\in \La$ with $|p_{n}- p| < \frac1n$ so that  
\begin{displaymath}
|w^{*}_{p_{n}}- a| = w^{*}_{p_{n}}- a \ge \e_{0} \quad \text{ or }\quad |y^{*}_{p_{n}}- y^{*}_{p}| \ge \e_{0}.
\end{displaymath}  We can assume without loss of generality that $\e_{0} <y_{p}-a $ and $ \e_{0} <   b- y_{p}^{*}$ if $b$ is finite.

Let's first consider the case when $ w^{*}_{p_{n}}\ge a + \e_{0}$. Then using Lemma \ref{lem-h-new} again, we have 
\begin{align*} 
F^{*}_{p_{n}} & =h_{p_{n}}(w^{*}_{p_{n}})  \ge h_{p_{n}}(a + \e_{0})   \ge h_{p}(a + \e_{0}) - z_{0}|p_{n} -p| > h_{p}(a + \e_{0})  - \frac{z_{0}}{n},   
\end{align*} where the second inequality follows from Lemma \ref{lem-p-hp-cont}. On the other hand, since $w_{p}^{*} = a$, we have from \eqref{e2-1st-order-condition} that $F_{p}^{*}=h_{p}(y_{p}^{*})  \le h_{p}(a)$. Then it follows that  
\begin{displaymath}
F^{*}_{p_{n}} - F_{p}^{*} \ge  h_{p}(a + \e_{0})  - \frac{z_{0}}{n} -h_{p}(a)  > \frac{h_{p}(a + \e_{0})    -h_{p}(a) }{2} > 0, 
\end{displaymath} for all $n$ sufficiently large; note that the last inequality follows from Lemma \ref{lem-h-new} and the assumption that $a + \e_{0} < y_{p}$.    But this leads to a contradiction because  $ |F^{*}_{p_{n}} -F^{*}_{p}|  \to 0$ as $n\to \infty$ thanks to Lemma \ref{lem-p-Fp*cont}. 

We now consider the case when $|y^{*}_{p_{n}}- y^{*}_{p}| \ge \e_{0}$. Then in view of Lemma \ref{lem-h-new} and the choice of $\e_{0}$,  
we have three possible cases:  \begin{enumerate}
  \item[i)] if $y^{*}_{p_{n}} \ge y^{*}_{p} + \e_{0}$, then $|h_{p}(y^{*}_{p_{n}})- h_{p}( y^{*}_{p})| = h_{p}( y^{*}_{p})-h_{p}(y^{*}_{p_{n}}) \ge h_{p}( y^{*}_{p})- h_{p}( y^{*}_{p}+ \e_{0})> 0,$
  \item[ii)] if $y_{p} \le y^{*}_{p_{n}} \le y^{*}_{p}-  \e_{0}$, then $|h_{p}(y^{*}_{p_{n}})- h_{p}( y^{*}_{p})|= h_{p}(y^{*}_{p_{n}})- h_{p}( y^{*}_{p}) \ge h_{p}( y^{*}_{p}-  \e_{0}) -h_{p}( y^{*}_{p}) > 0,$ and 
  \item[iii)] if  $a < y^{*}_{p_{n}} < y_{p}$, then as $h_{p}(y^{*}_{p_{n}}) > h_{p}(a) \ge  h_{p}( y^{*}_{p})$, we have \begin{align*} 
|h_{p}(y^{*}_{p_{n}})- h_{p}( y^{*}_{p})| &= h_{p}(y^{*}_{p_{n}})- h_{p}( y^{*}_{p})\\ & =  h_{p}(y^{*}_{p_{n}})- h_{p_{n}} (y^{*}_{p_{n}}) +h_{p_{n}} (y^{*}_{p_{n}})- h_{p_{n}}(y_{p}) \\ & \quad + h_{p_{n}}(y_{p})  - h_{p}(y_{p})+h_{p}(y_{p})-  h_{p}( y^{*}_{p}) \\
 & >  h_{p}(y_{p})-  h_{p}( y^{*}_{p}) - 2z_{0}|p_{n} - p|  \\
 & > h_{p}(y_{p})-  h_{p}( y^{*}_{p}) - \frac{2z_{0}}{n}\\
 & > \frac{ h_{p}(y_{p})-  h_{p}( y^{*}_{p})}2 > 0,
\end{align*} for all $n$ sufficiently large, where the first inequality follows from Lemmas \ref{lem-p-hp-cont} and  \ref{lem-h-new}. 
\end{enumerate} Combining the three cases, we arrive at  \begin{align*}
|h_{p}& (y^{*}_{p_{n}})- h_{p}( y^{*}_{p})| \\ & \ge \rho: =\min\bigg\{h_{p}( y^{*}_{p})- h_{p}( y^{*}_{p}+ \e_{0}), h_{p}( y^{*}_{p}-  \e_{0}) -h_{p}( y^{*}_{p}),  \frac{ h_{p}(y_{p})-  h_{p}( y^{*}_{p})}2\bigg  \} > 0,
\end{align*} for all $n$ sufficiently large. On the other hand, we have 
\begin{align*} 
 |F^{*}_{p_{n}} -F^{*}_{p}| & = |h_{p_{n}}(y^{*}_{p_{n}})- h_{p}( y^{*}_{p})|      
   \ge |h_{p}(y^{*}_{p_{n}})- h_{p}( y^{*}_{p})| - |h_{p_{n}}(y^{*}_{p_{n}})- h_{p}( y^{*}_{p_{n}})| 
   > \rho - \frac{z_{0}} n,
\end{align*} where the last inequality follows from Lemma \ref{lem-p-hp-cont} as in the previous case. Again, this leads to a contradiction 
 thanks to Lemma \ref{lem-p-Fp*cont}. 
\end{proof}

 We are now ready to present the main result of this section, which proves the equilibrium result in \thmref{thm-main-MFGC}. Recall the supply rate function $\mathfrak z$ defined in \eqref{eq-kappa-Q} and price function $\vphi$ given in Condition \ref{c-cond}(b) as well as the vector-valued function $\Psi$ defined in \eqref{e:Psi(p)-fn}.   Proposition \ref{prop-Rwy} shows that the long-term average supply rate of the $(w, y)$-policy is equal to ${\mathfrak z}(w,y)$. Furthermore, Proposition \ref{prop-z=Bid/Bxi}(iii) implies that ${\mathfrak z}(w,y) \in (0, z_{0}]$ for any $(w, y)\in \cR$. 
\begin{thm} \label{prop-game-equi}
Suppose Conditions \ref{diff-cnd}, \ref{c-cond},  
 and  \ref{3.9-suff-cnd} hold.     Then  
\begin{itemize}
  \item[{\em(i)}]   the mapping \begin{equation}
\label{e-MFG-fxpt}
 \mathfrak z\circ\Psi\circ \vphi: [0, z_{0}]\mapsto [0,z_{0}]  
\end{equation} has a fixed point $z^{e} \in [0, z_{0}]$; 
  \item[{\em(ii)}] denoting  $p^{e} : = \vphi ( z ^{e})$ and $(w^{e}, y^{e})= \Psi\circ\vphi(z^{e})$, the $(w^{e}, y^{e})$-policy is an admissible  mean-field equilibrium strategy for problem \eqref{e:sec2game}.
 \end{itemize} 
\end{thm}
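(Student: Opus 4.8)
The plan is to treat the two parts by their distinct mechanisms: part (i) is a topological fixed-point statement, while part (ii) exploits the fact that, once the ambient supply rate is frozen at its equilibrium value, the representative agent's problem collapses to the classical impulse control problem already solved in Proposition \ref{prop-Fmax}.

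For part (i), I would first check that $\mathfrak z\circ\Psi\circ\vphi$ is a well-defined continuous self-map of the compact interval $[0,z_{0}]$ and then invoke Brouwer's theorem (equivalently, the intermediate value theorem on an interval). The price map $\vphi$ is continuous by Condition \ref{c-cond}(b), and that same condition together with Lemma \ref{lem-La set}(iii) forces $\vphi(z)\in\La$ for every $z\in[0,z_{0}]$ (this is precisely the content of Remark \ref{rem-about-condition2.6}), so that $\Psi\circ\vphi$ is defined on all of $[0,z_{0}]$; the continuity of $\Psi$ on $\La$ is Proposition \ref{prop-wp-yp-cont}. Finally $\mathfrak z$ from \eqref{eq-kappa-Q} is continuous on $\cR$ with range contained in $(0,z_{0}]$ by Proposition \ref{lem-z=Bid/Bxi}(iii). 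Composing, $\mathfrak z\circ\Psi\circ\vphi$ maps $[0,z_{0}]$ continuously into $(0,z_{0}]\subset[0,z_{0}]$, and a fixed point $z^{*}$ exists.

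For part (ii), set $p^{*}=\vphi(z^{*})$ and $(w^{*},y^{*})=\Psi(p^{*})$. Since $p^{*}\ge\vphi_{\min}\in\La$ and $\La$ is upward closed by Lemma \ref{lem-La set}(iii), we have $p^{*}\in\La$, so $(w^{*},y^{*})$ is the maximizer of $F_{p^{*}}$, and by Proposition \ref{prop-Rwy} the corresponding policy is admissible with a well-defined supply rate. The argument is then a three-step chain. First, Proposition \ref{prop-Rwy} gives $\kappa^{R^{(w^{*},y^{*})}}=\mathfrak z(w^{*},y^{*})=\mathfrak z\circ\Psi\circ\vphi(z^{*})=z^{*}$, where the last equality is the fixed-point identity; this makes the market price self-consistent, namely $\vphi(\kappa^{R^{(w^{*},y^{*})}})=p^{*}$. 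Second, the reward of this $(w,y)$-policy evaluated at the frozen rate is $J(R^{(w^{*},y^{*})};z^{*})=F_{p^{*}}(w^{*},y^{*})=F_{p^{*}}^{*}$, the optimal value of the classical problem at price $p^{*}$ (combining \eqref{e:F_K} with the renewal-reward identity of Proposition \ref{prop-Rwy}). Third, for an arbitrary $R\in\A$ the agent faces the fixed price $\vphi(z^{*})=p^{*}$, so $J(R;z^{*})$ is exactly the classical long-term average reward at price $p^{*}$; Proposition \ref{prop-Fmax} then yields $J(R;z^{*})\le F_{p^{*}}^{*}=J(R^{(w^{*},y^{*})};z^{*})$. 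With $Q=R^{(w^{*},y^{*})}$ and $\kappa^{Q}=z^{*}$, this is exactly the equilibrium inequality \eqref{e:sec2game}.

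The main obstacle is the third step: establishing the inequality against \emph{every} admissible $R$, including nonstationary policies that alternate among several threshold rules, rather than merely against competing $(w,y)$-policies. This is where the full-class optimality of the classical solution in Proposition \ref{prop-Fmax} is indispensable. The conceptual point is the decoupling inherent to the mean-field game: the representative agent treats the supply rate, and hence the unit price, as exogenous, so her best response is a classical optimal impulse control; the fixed-point condition of part (i) is precisely what guarantees that this best response regenerates the postulated supply rate $z^{*}$ and price $p^{*}$, thereby closing the equilibrium loop.
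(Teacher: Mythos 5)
Your proposal is correct and follows essentially the same route as the paper's own proof: part (i) via checking that $\mathfrak z\circ\Psi\circ\vphi$ is a continuous self-map of $[0,z_0]$ (using Remark~\ref{rem-about-condition2.6}, Proposition~\ref{prop-wp-yp-cont}, and Proposition~\ref{lem-z=Bid/Bxi}(iii)) and invoking Brouwer's theorem, and part (ii) via the chain $\kappa^{R^{(w^*,y^*)}}=\mathfrak z(w^*,y^*)=z^*$, $J(R^{(w^*,y^*)};z^*)=F^*_{p^*}$, and the full-class bound $J(R;z^*)\le F^*_{p^*}$ for all $R\in\A$ from Proposition~\ref{prop-Fmax}(b). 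Your additional explicit verifications (that $p^*\in\La$ by upward closure of $\La$, and admissibility of the threshold policy via Proposition~\ref{prop-Rwy}(i)) are points the paper leaves implicit, but they do not constitute a different argument.
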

\begin{proof} (i) We have observed in Remark \ref{rem-about-condition2.6} that Condition  \ref{c-cond}(b) implies that 
 $\vphi(z) \in \La$ for every $z\in [0, z_{0}]$. Consequently the function $\Psi\circ \vphi: [0, z_{0}]\mapsto\cR$ is continuous thanks to Condition~\ref{c-cond}(b) and Proposition \ref{prop-wp-yp-cont}. Obviously $\mathfrak z$ is a continuous function on $\cR$. Furthermore, in view of Proposition \ref{prop-z=Bid/Bxi}(iii), $\mathfrak z(w, y) \in [0, z_{0}]$ for all $(w, y) \in \cR$. Therefore, $ \mathfrak z\circ\Psi\circ \vphi$ is a continuous function from $  [0, z_{0}]$ to $  [0, z_{0}]$. Hence we conclude from the Brouwer fixed point theorem that the function $\mathfrak z\circ\Psi\circ \vphi$ has a fixed point $z^{e} \in [0, z_{0}]$.

 (ii) Let  $p^{e}$ and  $(w^{e}, y^{e})$ be as in statement of the theorem. 
For the $(w^{e}, y^{e})$-policy $R^{(w^{e}, y^{e})}$  
 given in Definition \ref{thresholds-policy},  
 we have from  \eqref{eq-kappa-Q} and \eqref{e:F_K}  that \begin{equation}
\label{eq:MFG-value}
\kappa^{R^{(w^{e}, y^{e})}} = \mathfrak z(w^{e}, y^{e}) = z^{e}, \quad \text{ and }\quad J(R^{(w^{e}, y^{e})}, z^{e}) =  F^{*}_{p^{e}} = F_{p^{e}}(w^{e}, y^{e}).
\end{equation}   
Recall that $R^{(w^{e}, y^{e})} \in  \A$.  On the other hand, for any $R\in   \A$,  Proposition \ref{prop-Fmax}    implies that $J(R, z^{e}) \le F^{*}_{p^{e}} $. Consequently, $R^{(w^{e}, y^{e})}$   is a mean field equilibrium strategy for problem \eqref{e:sec2game}  in the class  $ \A$. 
 \end{proof}

\begin{rem}\label{rem-equi-inAp}  
 The conclusion of Theorem  \ref{prop-game-equi} improves the results in \cite{Chris-21}, which establishes the existence of an equilibrium  {\em in the class of  threshold strategies when $a$ is an entrance boundary and $c\equiv 0$}.  Here we show that for the more general problem \eqref{e:reward-fn}--\eqref{e:sec2game}, the $(w^{e}, y^{e})$-policy $R^{(w^{e}, y^{e})}$ is an  equilibrium in   $\A$, the set of {\em all} admissible impulse control policies. In addition, in view of Remark \ref{rem1-transversality},   $R^{(w^{e}, y^{e})}$   is actually a mean field equilibrium strategy for problem \eqref{e:sec2game}  in a larger class, namely,  $\A_{p^{e}}$, where $p^e$ is defined in Theorem \ref{prop-game-equi} and $\A_p$ is defined in \remref{rem1-transversality}.
\end{rem}

\section{Mean Field Control}\label{sect-mean-field-control}
This section is devoted to the mean field control problem \eqref{e:sec2control}. In other words,   we wish to   find a   policy $Q^{\star} \in \A $ that maximizes the reward functional $J(R,\kappa^{R})$. 

As mentioned in the introduction, we leverage the mean field structure and renewal theory to explicitly solve \eqref{e:sec2control}. A crucial step is establishing the upper bound for $J(R,\kappa^{R})$ for all $R\in \A$. To achieve this, we first consider a family of {\em constrained} classical long-term average stochastic impulse control problems in \eqref{e-mean-field-control}.  Using a Lagrange multiplier,  this leads to an associated family of unconstrained problems on the right-hand side of \eqref{e-lagrange-bnd}.  Thanks to Proposition \ref{prop-Fmax},   the $(w^*(\lambda, z),  y^*(\lambda, z))$-policy is an optimal strategy for the unconstrained problem, where $(w^*(\lambda, z),  y^*(\lambda, z))$ is the maximizing pair for the function $F_{\vphi(z)-\la}$, with  $\la \in \Lambda_z$ and $\Lambda_z$ being defined in \eqref{e:Lambda-z-defn}.  An asymptotic analysis of  $  y^*(\lambda, z)$ as $\lambda  \uparrow  \sup \Lambda_z$ leads to the key fixed-point identity \eqref{e:z=Bid/Bxi}. This identity allows us to derive an upper bound for $J(R,\kappa^{R})$ in Theorem~\ref{thm-mean-field-control}, which is attained by a specific $(w,y)$-policy, thus proving its optimality for the mean-field control problem.

Recall that we have observed in \eqref{e:kappa<z0} that $\kappa^{R} \le z_{0}$ for any $R\in  \A$,   where $z_{0}$ is defined in \eqref{eq:z0defn} and is finite  under Condition \ref{diff-cnd}.   Therefore,  we now consider the following family of constrained long-term average impulse  control problems: 
  \begin{equation}
\label{e-mean-field-control}
\begin{cases}
\displaystyle  
\sup_{R\in \mathcal A} \liminf_{t\to\infty} [ \lan  c,\mu_{0,t}^{R}\ran + \lan  \vphi(z)  B\id- K,\mu_{1,t}^{R}\ran ], \\
 \text{ subject to } \ z= \limsup_{t\to\infty}   \lan   B\id,\mu_{1,t}^{R}\ran, 
\end{cases}
\end{equation}  where $z\in [0, z_{0}]$.    The constraint in \eqref{e-mean-field-control} is a direct consequence of the definition of $\kappa^{R}$ in \eqref{e:kappaQ}.  
To solve the constrained problem \eqref{e-mean-field-control}, we   consider the following unconstrained problem by the Lagrange multiplier method. That is,  for any given $z\in [0, z_{0}]$,  we consider  
\begin{equation}
\label{e-lagrange-reformulation}\begin{aligned}
  \sup_{R\in \mathcal A} & \bigg\{
   \liminf_{t\to\infty} [ \lan  c,\mu_{0,t}^{R}\ran + \lan  \vphi(z)  B\id- K,\mu_{1,t}^{R}\ran ]- \lambda \Big ( \limsup_{t\to\infty}   \lan   B\id,\mu_{1,t}^{R}\ran-z\Big)\bigg\}, \ \ \la \in \R. \end{aligned}
\end{equation}

\begin{lem}\label{lem5.1}
For any   $\lambda \in \R$,  $z\in [0, z_{0}]$, and $R \in \A$,  we have  \begin{align}
\label{e-lagrange-bnd}\nonumber  
& \liminf_{t\to\infty} [ \lan  c,\mu_{0,t}^{R}\ran + \lan  \vphi(z)  B\id- K,\mu_{1,t}^{R}\ran ]- \lambda\Big ( \limsup_{t\to\infty}   \lan   B\id,\mu_{1,t}^{R}\ran-z\Big)\\ & \quad \le   
 \limsup_{t\to\infty} [ \lan   c,\mu_{0,t}^{R}\ran + \lan ( \vphi(z) -\lambda)  B\id- K,\mu_{1,t}^{R}\ran ] + \lambda  z.
\end{align} 
\end{lem}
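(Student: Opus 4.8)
The plan is to cancel the common $\lambda z$ term on both sides of \eqref{e-lagrange-bnd} and then reduce everything to one elementary $\limsup$/$\liminf$ inequality for real sequences. First I would introduce the shorthand $P(t) := \lan c,\mu_{0,t}^{R}\ran + \lan \varphi(z) B\id - K,\mu_{1,t}^{R}\ran$ and $Q(t) := \lan B\id,\mu_{1,t}^{R}\ran$. By the linearity of the pairing $\lan\cdot,\cdot\ran$ in its first argument, the integrand $(\varphi(z)-\lambda)B\id - K$ splits as $(\varphi(z)B\id - K) - \lambda B\id$, so the bracket on the right-hand side of \eqref{e-lagrange-bnd} is exactly $P(t) - \lambda Q(t)$. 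Thus, after subtracting $\lambda z$ from both sides, the inequality to prove becomes
\[
\liminf_{t\to\infty} P(t) - \lambda \limsup_{t\to\infty} Q(t) \le \limsup_{t\to\infty}[P(t) - \lambda Q(t)].
\]

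Second, I would record the elementary fact that $\limsup_t(x_t + y_t) \ge \liminf_t x_t + \limsup_t y_t$ whenever $\limsup_t y_t$ is finite. This follows by noting that for $t \ge T$ one has $x_t + y_t \ge (\inf_{s\ge T} x_s) + y_t$, taking $\limsup_t$ to obtain $\limsup_t(x_t+y_t) \ge (\inf_{s\ge T} x_s) + \limsup_t y_t$, and then letting $T\to\infty$ so that $\inf_{s\ge T} x_s \uparrow \liminf_t x_t$. Applying this with $x_t = P(t)$ and $y_t = -\lambda Q(t)$ gives
\[
\limsup_{t\to\infty}[P(t) - \lambda Q(t)] \ge \liminf_{t\to\infty} P(t) + \limsup_{t\to\infty}(-\lambda Q(t)).
\]

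Third, I would dispatch the term $\limsup_t(-\lambda Q(t))$ by a case split on the sign of $\lambda$, using that $Q(t) = \lan B\id,\mu_{1,t}^{R}\ran \ge 0$ (since each impulse increment satisfies $B\id(X^{R}(\tau_k),X^{R}(\tau_k-)) = Y_k > 0$) and, crucially, that $\limsup_t Q(t) = \kappa^{R} \le z_0 < \infty$ by Lemma~\ref{lem2-transversality}, which guarantees all $Q$-related quantities are finite. If $\lambda \ge 0$, then $\limsup_t(-\lambda Q(t)) = -\lambda \liminf_t Q(t) \ge -\lambda \limsup_t Q(t)$ because $\liminf_t Q(t) \le \limsup_t Q(t)$; if $\lambda < 0$, then $\limsup_t(-\lambda Q(t)) = -\lambda \limsup_t Q(t)$. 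In both cases $\limsup_t(-\lambda Q(t)) \ge -\lambda \limsup_t Q(t)$, and combining this with the previous display yields the reduced inequality; adding back $\lambda z$ completes the argument.

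The main obstacle — indeed essentially the only subtlety — is the bookkeeping around the two-sided nature of $\limsup/\liminf$: the bound pairs a $\liminf$ of the reward against a $\limsup$ of the supply with a sign-indefinite multiplier $\lambda$, so I must check that the case split on the sign of $\lambda$ lands on the correct side of the inequality in each case, and I must invoke the finiteness $\limsup_t Q(t) \le z_0$ from Lemma~\ref{lem2-transversality} to rule out any $\infty-\infty$ ambiguity in the superadditivity step. The remaining degenerate possibility, $\liminf_t P(t) = -\infty$ (which can occur if the average number of interventions diverges), is harmless, since then the left-hand side is $-\infty$ and the inequality holds trivially.
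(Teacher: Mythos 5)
Your proof is correct, and it takes a somewhat different route from the paper's, so a comparison is worthwhile. Writing $P(t) := \lan c,\mu_{0,t}^{R}\ran + \lan \vphi(z)B\id - K,\mu_{1,t}^{R}\ran$ and $Q(t) := \lan B\id,\mu_{1,t}^{R}\ran$, the paper proves the same reduced inequality $\liminf_t P(t) - \lambda\limsup_t Q(t) \le \limsup_t[P(t)-\lambda Q(t)]$ by nested subsequence extraction: it first picks $\{t_j\}$ along which $Q(t_j) \to \limsup_t Q(t)$, then a further subsequence $\{t_{j_k}\}$ along which $P(t_{j_k})$ converges to $\limsup_j P(t_j)$; along $\{t_{j_k}\}$ both terms converge, so the sum of limits equals $\lim_k[P(t_{j_k})-\lambda Q(t_{j_k})]$, which is at most the full $\limsup_t$. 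Because $Q$ genuinely converges along the chosen sequence, the identity $-\lambda\limsup_t Q(t) = \lim_j\bigl(-\lambda Q(t_j)\bigr)$ holds with \emph{equality} for every $\lambda\in\R$, and no case analysis on the sign of $\lambda$ is ever needed. Your route instead invokes the superadditivity inequality $\limsup_t(x_t+y_t)\ge \liminf_t x_t + \limsup_t y_t$ and is then forced into the sign split, precisely because $\limsup_t(-\lambda Q(t)) = -\lambda\liminf_t Q(t)$ differs from $-\lambda\limsup_t Q(t)$ when $\lambda > 0$; you correctly verify that the inequality points the right way in both cases. A modest advantage of your version is that it makes explicit what the paper leaves implicit: the finiteness $\limsup_t Q(t) = \kappa^{R} \le z_0$ from Lemma~\ref{lem2-transversality}, which is needed to rule out an $\infty - \infty$ ambiguity (the paper's step of splitting $\lim_k[P(t_{j_k})-\lambda Q(t_{j_k})]$ into two separate limits quietly relies on the same fact), and you also dispose cleanly of the degenerate case $\liminf_t P(t) = -\infty$. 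In short, both arguments are elementary and of comparable length: the paper's subsequence device is slightly cleaner in treating all $\lambda$ uniformly, while yours isolates a reusable real-analysis lemma and is more explicit about where finiteness enters.
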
 
\begin{proof}
  Arbitrarily fix an $R \in \A$ and let $\lambda, z $ be as in the statement of the lemma.   Let $\{t_{j}\}$ be a sequence satisfying   $\lim_{j\to \infty} t_{j} =\infty$ and 
   $$ \limsup_{t\to\infty}   \lan   B\id,\mu_{1,t}^{R}\ran = \lim_{j\to\infty}  \lan   B\id,\mu_{1,t_{j}}^{R}\ran .$$ We next choose a subsequence $\{t_{j_{k}}\}$  of $\{t_{j}\}$ satisfying $$ \limsup_{j\to\infty} [ \lan   c,\mu_{0,t_{j}}^{R}\ran + \lan  \vphi(z)  B\id- K,\mu_{1,t_{j}}^{R}\ran ]  = \lim_{k\to\infty }  [ \lan   c,\mu_{0,t_{j_{k}}}^{R}\ran + \lan  \vphi(z)  B\id- K,\mu_{1,t_{j_{k}}}^{R}\ran ].  $$   Then  
    we have 
\begin{align*} 
 &  \liminf_{t\to\infty} [ \lan c,\mu_{0,t}^{R}\ran + \lan  \vphi(z)  B\id- K,\mu_{1,t}^{R}\ran ]- \lambda\Big ( \limsup_{t\to\infty}   \lan   B\id,\mu_{1,t}^{R}\ran-z\Big) \\ 
 &\ \  \le  \limsup_{j\to\infty}  [ \lan   c,\mu_{0,t_{j}}^{R}\ran + \lan  \vphi(z)  B\id- K,\mu_{1,t_{j}}^{R}\ran ]-\lambda  \lim_{j\to\infty}  \lan   B\id,\mu_{1,t_{j}}^{R}\ran + \la z\\ 
 & \ \   = \lim_{k\to \infty}  [ \lan c,\mu_{0,t_{j_{k}}}^{R}\ran + \lan  ( \vphi(z) -\la)  B\id- K,\mu_{1,t_{j_{k}}}^{R}\ran ] + \la z \\
 & \ \ \le   \limsup_{t\to\infty} [ \lan   c,\mu_{0,t}^{R}\ran + \lan ( \vphi(z) -\lambda)  B\id- K,\mu_{1,t}^{R}\ran ] + \lambda  z,  
\end{align*}  establishing \eqref{e-lagrange-bnd}. 
\end{proof}

In view of the right-hand side of  \eqref{e-lagrange-bnd} and Proposition \ref{prop-Fmax}, 
 we shall now  consider the function  
$ F_{\vphi(z) -\lambda}(w, y)$, $(w, y)\in \cR$ for  $ z \in [0,z_0]$ and $\lambda \in (-\infty,\vphi(z))$, 
 where  $F_{p}$ is defined in \eqref{e:F_K} using $p = \vphi(z) - \lambda \in \R$.

 Recall the set of feasible prices $\La$ defined in \eqref{e:set-Lambda-defn} as well the price function $\vphi$ satisfying Condition \ref{c-cond}(b). 
For every fixed $z \in [0, z_{0}]$, define \begin{align}\label{e:Lambda-z-defn}\Lambda_{z}: =  \vphi(z) -\La  = \{\vphi(z) - p: p\in \La\}.
\end{align} 
 Note that for every  $\lambda \in \Lambda_{z}$, we have 
   $\vphi(z) - \lambda \in \La$.  In addition, we have observed in Section \ref{sec-equilibrium} that $p_{0} = \inf\La \ge \frac{K}{b-a}$ and that $0 \notin \La$. Hence $\vphi(z) - \lambda >0$ for every  $\lambda \in \Lambda_{z}$. Furthermore,   in view of  Lemma \ref{lem-La set}, if Conditions  \ref{diff-cnd} and \ref{c-cond} hold, then  for every $z \in [0, z_{0}]$, we have $\Lambda_{z} \neq \emptyset$ with $\la_{z}^{\mathrm r} : = \sup  \Lambda_{z} = \vphi(z) - p_{0} \le \vphi(z) $; and $\la_{1} \in \Lambda_{z}$ whenever $\la_{1} < \la_2 $ and $\la_{2} \in \Lambda_{z}$. 
Consequently, we can write $\Lambda_{z} = (-\infty, \la_{z}^{\mathrm r})$ or $\Lambda_{z} = (-\infty, \la_{z}^{\mathrm r}]$.

\begin{lem}\label{sect5:propF-max} Assume Conditions \ref{diff-cnd},  \ref{c-cond},  and \ref{3.9-suff-cnd} hold.  Let $z \in [0,z_0]$.  
\begin{itemize}
  \item[{\em(i)}] For any $\la\in \Lambda_{z}$, there exists a unique pair $(w^{*}, y^{*})  = (w^{*}(\lambda, z), y^{*}(\lambda, z)) \in \cR$ so that 
\begin{equation}\label{sect5:e-Fmax}
 h_{\vphi(z) -\lambda}(w^{*}) \ge \sup_{(w, y) \in \cR}   F_{\vphi(z) -\lambda}(w, y) = F_{\vphi(z) -\lambda} (w^{*}, y^{*}) = h_{\vphi(z) -\lambda}(y^{*}). 
\end{equation}

  \item[{\em(ii)}] For any  $\la_{1} < \la_{2} \in \Lambda_{z}$, we have $$\sup_{(w, y) \in \cR}F _{\vphi(z) -\lambda_{2}} (w, y)  <  \sup_{(w, y) \in \cR}F _{\vphi(z) -\lambda_{1}} (w, y) .$$
\item[{\em(iii)}] Furthermore, $\lim_{\la\uparrow \la_{z}^{\mathrm r}} \sup_{(w, y) \in \cR}F _{\vphi(z) -\lambda} (w, y)  = \bar c(b).$
\end{itemize} \end{lem}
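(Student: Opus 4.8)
The plan is to reduce the whole lemma to statements about the single-price value $F_p^*:=\sup_{(w,y)\in\cR}F_p(w,y)$ via the reparametrization $p=\vphi(z)-\la$. By \eqref{e:Lambda-z-defn}, $\la\in\Lambda_z$ is equivalent to $p\in\La$, and $\la\uparrow\la_z^{\mathrm r}=\vphi(z)-p_0$ corresponds to $p\downarrow p_0:=\inf\La$; thus (iii) is exactly $\lim_{p\downarrow p_0}F_p^*=\bar c(b)$. For (i) I would invoke \propref{prop-Fmax}: since $p\in\La$ forces \cndref{cond-interior-max}, $F_p$ attains its supremum at some $(w^*,y^*)\in\cR$, and the relations in \eqref{sect5:e-Fmax} are the first-order conditions --- differentiating $F_p(w,y)=\frac{g(y)-g(w)+p(y-w)-K}{\xi(y)-\xi(w)}$ gives $h_p(y^*)=F_p^*$ and $h_p(w^*)=F_p^*$ when $w^*>a$, while the one-sided condition \eqref{e2-1st-order-condition} at an entrance boundary leaves only $h_p(w^*)\ge F_p^*$. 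For (ii) I would use the affine dependence $F_{p_1}(w,y)-F_{p_2}(w,y)=(p_1-p_2)\mathfrak z(w,y)$ with $\mathfrak z(w,y)=\frac{y-w}{\xi(y)-\xi(w)}>0$ by \propref{lem-z=Bid/Bxi}(iii); evaluating at a maximizer of $F_{p_2}$ and noting $\la_1<\la_2\Leftrightarrow p_1>p_2$ yields $F_{p_1}^*\ge F_{p_2}^*+(p_1-p_2)\mathfrak z>F_{p_2}^*$.

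The crux is (iii), which I would establish by a two-sided squeeze that needs neither existence nor asymptotics of maximizers. The lower bound is immediate: each $p>p_0$ lies in $\La$, so by \eqref{e:set-Lambda-defn} some pair makes $F_p>\bar c(b)$, giving $F_p^*>\bar c(b)$ and $\liminf_{p\downarrow p_0}F_p^*\ge\bar c(b)$. For the upper bound I would combine the affine structure with the uniform bound $\mathfrak z(w,y)\le z_0$ (from \eqref{eq:z0defn} via the mean value theorem, or \propref{lem-z=Bid/Bxi}(iii)): for any $p'<p_0<p$,
\[
F_p^*=\sup_{(w,y)\in\cR}\big[F_{p'}(w,y)+(p-p')\mathfrak z(w,y)\big]\le F_{p'}^*+(p-p')z_0.
\]
Since $p'<p_0=\inf\La$ we have $p'\notin\La$, and the definition of $\La$ read contrapositively says $F_{p'}^*\le\bar c(b)$; hence $F_p^*\le\bar c(b)+(p-p')z_0$. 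Letting $p\downarrow p_0$ and then $p'\uparrow p_0$ gives $\limsup_{p\downarrow p_0}F_p^*\le\bar c(b)$, completing the squeeze. As a byproduct $F_{p_0}^*=\bar c(b)$, so $p_0\notin\La$.

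The step I expect to be most delicate is not the squeeze itself but the two facts it rests on: that $z_0<\infty$ with $\mathfrak z\le z_0$ holding uniformly on $\cR$, so that the error $(p-p')z_0$ stays controlled as $p,p'\to p_0$, and that $F_{p'}^*\le\bar c(b)$ for infeasible prices is precisely the complementary reading of \eqref{e:set-Lambda-defn}; both are supplied by the cited results, so the argument is self-contained. I would add a remark that a more computational route recovers (iii) by showing $y_p^*\to b$ as $p\downarrow p_0$ and using $\lim_{x\to b}h_p(x)=\bar c(b)$ --- which holds because the running $M$-average of $c$ tends to $\bar c(b)$ as in \eqref{eq:c-bar(b)} while $p/\xi'(x)\to 0$ by \eqref{e-sM-infty}. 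That asymptotic analysis of $(w_p^*,y_p^*)$ is the harder ingredient flagged in the introduction and is what the later identity \eqref{e:z=Bid/Bxi} will require, but it is not needed for the limit in (iii) alone.
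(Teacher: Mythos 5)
Your proposal is correct and follows essentially the same route as the paper: part (i) via Proposition~\ref{prop-Fmax}, part (ii) via the affine dependence of $F_p$ on $p$ evaluated at a maximizer of $F_{p_2}$, and part (iii) by playing the feasibility of prices above $p_0=\inf\La$ against the infeasibility of prices below $p_0$ through the Lipschitz bound $(p-p')z_0$ stemming from $\mathfrak z\le z_0$. The only difference is organizational: the paper packages the upper bound in (iii) as a contradiction (assuming the limit exceeds $\bar c(b)$ by $\delta$ and producing a feasible price $p_0-\rho_0$ strictly below $\inf\La$), whereas you run the identical comparison as a direct squeeze of $\liminf$ and $\limsup$.
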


\begin{proof} Assertion (i) follows from Proposition~\ref{prop-Fmax} directly  since $\la \in \Lambda_z$ implies that $\vphi(z)-\la$ satisfies   \cndref{cond-interior-max}  and hence \eqref{sect5:e-Fmax} holds for some $(w^{*}, y^{*})  = (w^{*}(\lambda, z), y^{*}(\lambda, z)) \in \cR$. Assertion (ii) is obvious as $\vphi(z) -\lambda_{1} > \vphi(z) -\lambda_{2}$. 

We now prove (iii). For every $\la \in \Lambda_{z}$, we have $ \vphi(z) -\lambda \in \La$ and hence 
$F^{*}_{ \vphi(z) -\lambda} : = \sup_{(w, y)\in \cR} F_{ \vphi(z) -\lambda}(w, y) > \bar c(b)$. Furthermore, in view of 
assertion (ii), $F^{*}_{ \vphi(z) -\lambda}  $  decreases  as $\la \uparrow \la_{z}^{\mathrm r}$. Thus the limit  $\lim_{\la\uparrow \la_{z}^{\mathrm r}} F^{*}_{ \vphi(z) -\lambda} $ exists and is greater or equal to $  \bar c(b)$. 

Suppose   that $\lim_{\la\uparrow \la_{z}^{\mathrm r}} F^{*}_{ \vphi(z) -\lambda} = \bar c(b) + \delta$ for some  $\delta> 0$. Then there exists an $\e > 0$ so that $F^{*}_{\vphi(z) -\la_{z}^{ \mathrm r} + \rho} \ge  \bar c(b) + \frac34\delta $ for all $\rho \in (0, \e]$. 
 Furthermore, since $\la_{z}^{\mathrm r} -\rho \in \Lambda_{z}$,   there exists a pair $(w^{*}_{\rho}, y^{*}_{\rho}) \in \cR$ so that  \begin{align*} 
\bar c(b) + \frac34\delta  & \le F^{*}_{\vphi(z) - \la_{z}^{\mathrm r} +\rho} =F_{\vphi(z) -  \la_{z}^{\mathrm r} +\rho } (w^{*}_{\rho}, y^{*}_{\rho})\\ 
& =  \frac{Bg(w^{*}_{\rho}, y^{*}_{\rho}) + (\vphi(z) -\la_{z}^{\mathrm r} - \rho)B\id (w^{*}_{\rho}, y^{*}_{\rho}) -K}{ B \xi(w^{*}_{\rho}, y^{*}_{\rho})} + 2\rho  \frac{B\id (w^{*}_{\rho}, y^{*}_{\rho})}{B \xi(w^{*}_{\rho}, y^{*}_{\rho})}\\ 
& = F_{ \vphi(z)- \la_{z}^{\mathrm r} -\rho } (w^{*}_{\rho}, y^{*}_{\rho}) +  2\rho \frac{1}{\xi'(\theta)} \\
& \le F_{ \vphi(z)- \la_{z}^{\mathrm r} -\rho } (w^{*}_{\rho}, y^{*}_{\rho}) +  2\rho z_{0}, \end{align*} where the last equality follows from the mean value theorem,   $\theta$ is between $w^{*}_{\rho}$ and $ y^{*}_{\rho}$, and $z_{0}$ is the positive constant defined in \eqref{eq:z0defn}.
 We now let $\rho_{0} : = \frac\e2 \wedge \frac\delta {8z_{0}}$. Then  it follows from the above   displayed equation  that \begin{displaymath}
\bar c(b) + \frac34\delta  
\le  F_{\vphi(z)- \la_{z}^{\mathrm r} -\rho_{0} } (w^{*}_{\rho_{0}}, y^{*}_{\rho_{0}})  +  2  \frac\delta {8z_{0}} z_{0} = F_{\vphi(z)- \la_{z}^{\mathrm r} -\rho_{0} } (w^{*}_{\rho_{0}}, y^{*}_{\rho_{0}})  + \frac\delta 4,
\end{displaymath} or \begin{displaymath}
F_{\vphi(z)- \la_{z}^{\mathrm r} -\rho_{0} } (w^{*}_{\rho_{0}}, y^{*}_{\rho_{0}})  \ge \bar c(b) + \frac \delta 2.
\end{displaymath} This says that $\la_{z}^{\mathrm r} +\rho_{0} \in \Lambda_{z}  $, where $\Lambda_{z}$ is defined in \eqref{e:Lambda-z-defn}.  
   Hence $\la_{z}^{\mathrm r} <  \la_{z}^{\mathrm r} +\rho_{0}  \le \sup\Lambda_{z} $, contradicting the fact that $\la_{z}^{\mathrm r} = \sup \Lambda_{z}$. 
\end{proof}

\begin{lem}\label{prop-ybar-la-to-b} Assume Conditions \ref{diff-cnd},   \ref{c-cond}(a), and \ref{3.9-suff-cnd} hold. 
Then for any $z\in [0, z_{0}]$ and $\la \in \Lambda_{z}$,  the maximizing pair $( w^{*}(\la,  z),   y^{*}(\la, z))$ for the function $F_{\vphi(z) -\lambda}(w, y)$ satisfies \linebreak
$\lim_{\la\uparrow \la_{z}^{\mathrm r}}  y^{*}(\la, z)  =b$.
\end{lem}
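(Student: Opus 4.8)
\emph{Reparametrisation and reduction.} The plan is to phrase everything in terms of the price $p:=\vphi(z)-\la$. By the definition of $\Lambda_z$ in \eqref{e:Lambda-z-defn}, as $\la\uparrow\la_z^{\mathrm r}=\vphi(z)-p_0$ the price decreases to $p_0=\inf\La$, and the maximizing threshold satisfies $y^*(\la,z)=y_p^*$, the (unique) second coordinate of $\argmax F_p$ with $p=\vphi(z)-\la$. Thus the claim $\lim_{\la\uparrow\la_z^{\mathrm r}}y^*(\la,z)=b$ is equivalent to $y_p^*\to b$ as $p\downarrow p_0$, and I would argue by contradiction: if this fails, there are a level $b'<b$ and a sequence $p_n\downarrow p_0$ (so $p_n\in\La$ by Lemma \ref{lem-La set}(iii)) with $y_{p_n}^*\le b'$ for all $n$; when $b=\infty$ one simply reads $b'<\infty$.

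\emph{Structural facts to assemble.} Writing $h_p=h_0+p/\xi'$ from \eqref{e-h-fn-defn}, I would first record that $\lim_{x\to b}h_p(x)=\bar c(b)$ for every $p$: indeed $h_0(x)\to\bar c(b)$ by Lemma \ref{lem-ell-limit at b}, while $1/\xi'(x)\to 0$ since $\xi'(x)\to\infty$ by \eqref{e-sM-infty}. Next I would invoke Lemma \ref{lem-h-new} (valid at each $p_n$ and at $p_0$, since $p_0\ge 0$) to the effect that $h_p$ is strictly increasing on $(a,y_p)$ and strictly decreasing on $(y_p,b)$, with $y_p\in(a,b)$ its unique interior maximizer. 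Finally, from the first-order relation \eqref{sect5:e-Fmax}, namely $h_p(w_p^*)\ge F_p^*=h_p(y_p^*)$, together with this strict monotonicity, I would check $y_p^*>y_p$: were $y_p^*\le y_p$, then $w_p^*<y_p^*$ would give $h_p(w_p^*)<h_p(y_p^*)=F_p^*$, contradicting $h_p(w_p^*)\ge F_p^*$. Hence the optimal threshold always lies on the strictly decreasing branch of $h_p$.

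\emph{The contradiction.} Since $h_{p_0}$ is strictly decreasing on $(y_{p_0},b)$ with limit $\bar c(b)$ at $b$, one has $h_{p_0}(x)>\bar c(b)$ for every $x\in(y_{p_0},b)$. Fix $\hat x\in(\max\{y_{p_0},b'\},b)$. For each $n$ the inequalities $y_{p_n}<y_{p_n}^*\le b'<\hat x$ place $\hat x$ strictly beyond $y_{p_n}^*$ on the decreasing branch of $h_{p_n}$, so $h_{p_n}(\hat x)<h_{p_n}(y_{p_n}^*)=F_{p_n}^*$. Letting $n\to\infty$ and using that $p\mapsto h_p(\hat x)=h_0(\hat x)+p/\xi'(\hat x)$ is continuous while $F_{p_n}^*\to\bar c(b)$ by Lemma \ref{sect5:propF-max}(iii), I would obtain $h_{p_0}(\hat x)\le\bar c(b)$, contradicting $h_{p_0}(\hat x)>\bar c(b)$. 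This contradiction forces $y_p^*\to b$, that is $\lim_{\la\uparrow\la_z^{\mathrm r}}y^*(\la,z)=b$.

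\emph{Main obstacle.} The three substantive inputs are borrowed results: the boundary limit of $h_0$ (Lemma \ref{lem-ell-limit at b}), the unimodality of $h_p$ (Lemma \ref{lem-h-new}), and the convergence of the optimal value to $\bar c(b)$ (Lemma \ref{sect5:propF-max}(iii)). The delicate point is therefore the limiting bookkeeping: one must select the single test point $\hat x$ simultaneously above the fixed peak $y_{p_0}$ and above the escape level $b'$, and be sure it remains on the decreasing branch of every $h_{p_n}$. The enabling observation is precisely $y_{p_n}<y_{p_n}^*\le b'$, which prevents the peaks $y_{p_n}$ from drifting past $\hat x$; this is what allows the pointwise limit $h_{p_n}(\hat x)\to h_{p_0}(\hat x)\le\bar c(b)$ to clash with the strict gap $h_{p_0}(\hat x)>\bar c(b)$.
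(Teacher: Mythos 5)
Your reduction to prices $p=\vphi(z)-\la\downarrow p_{0}$, the contradiction set-up, and the use of \eqref{sect5:e-Fmax} plus unimodality to place $y_{p}^{*}$ on the decreasing branch of $h_{p}$ all parallel the paper's argument. The gap is the parenthetical claim that Lemma \ref{lem-h-new} is ``valid at each $p_n$ and at $p_0$, since $p_0\ge 0$''. Lemma \ref{lem-h-new} requires Condition \ref{cond-interior-max} for the price in question, i.e.\ $p\in\La$; the sign of $p$ is irrelevant. And $p_{0}=\inf\La$ is \emph{never} in $\La$: if it were, the Lipschitz continuity of $p\mapsto F_{p}^{*}$ (Lemma \ref{lem-p-Fp*cont}) would give $\lim_{p\downarrow p_{0}}F_{p}^{*}=F_{p_{0}}^{*}>\bar c(b)$, contradicting Lemma \ref{sect5:propF-max}(iii). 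So nothing entitles you to the existence of the peak $y_{p_{0}}$, nor to a strictly decreasing branch of $h_{p_{0}}$, and your pivotal inequality $h_{p_{0}}(\hat x)>\bar c(b)$ is unsupported. The failure is not cosmetic: when $b=\infty$ one can have $p_{0}=0$ (Lemma \ref{lem-La set}(ii) only gives $p_{0}\ge 0$ in that case), and then $h_{p_{0}}=h_{0}$ is the running $M$-average of the increasing function $c$, hence nondecreasing on all of $\I$ with $h_{0}(x)<\lim_{y\to b}h_{0}(y)=\bar c(b)$ for every $x$ (by \eqref{e:h'-expression} and Lemma \ref{lem-ell-limit at b}). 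There is no decreasing branch, $y_{p_{0}}$ does not exist, your test point $\hat x$ cannot even be chosen, and the limiting inequality $h_{p_{0}}(\hat x)\le\bar c(b)$ is simply true --- no contradiction arises.

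This is exactly why the paper splits into two cases. When $p_{0}=0$ (its Case 1), the contradiction is obtained directly from monotonicity: $\bar c(b)\le\lim_{j}F^{*}_{p_{k_j}}=h_{0}(y^{*})<\bar c(b)$, with no appeal to a decreasing branch. When $p_{0}>0$ (its Case 2, your situation), the existence of a critical point of $h_{p_{0}}$ is not cited from Lemma \ref{lem-h-new} but \emph{derived from the contradiction hypothesis itself}: Condition \ref{3.9-suff-cnd} makes each $h_{p_{k_j}}$ strictly increasing on $(a,\hat x_{\mu,c})$, so \eqref{sect5:e-Fmax} forces an interior local maximum $\bar x_{\la_{k_j}}\in(w^{*}_{\la_{k_j}},y^{*}_{\la_{k_j}})$ with $h'_{p_{k_j}}(\bar x_{\la_{k_j}})=0$; these points stay bounded precisely because the $y^{*}$'s are assumed bounded away from $b$, so a subsequential limit $\bar x$ satisfies $h'_{p_{0}}(\bar x)=0$. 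Only then is the unimodality \emph{argument} of Lemma \ref{lem-h-new} (not the lemma itself) re-run for $h_{p_{0}}$, yielding $h_{p_{0}}(y^{*})>\bar c(b)$ against $h_{p_{0}}(y^{*})=\lim_{j}F^{*}_{p_{k_j}}=\bar c(b)$. Your proof needs both repairs: the separate treatment of $p_{0}=0$, and, for $p_{0}>0$, the derivation of $y_{p_{0}}$'s existence and of the unimodality of $h_{p_{0}}$ from the contradiction hypothesis rather than by citation.
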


\begin{proof} Fix an arbitrary   $z\in [0, z_{0}]$,
let $\{\la_{k}\}$ be an increasing sequence that converges  to $\la_{z}^{\mathrm r}  $ as $k\to\infty$, and denote by $\{(w^{*}_{\la_{k}}, y^{*}_{\la_{k}}) = (w^{*}({\la_{k}},z), y^{*} ({\la_{k}},z))\}$ a corresponding sequence of maximizers for the functions $\{F_{\vphi(z) -\lambda_{k}}\}$; here and throughout the proof we omit the dependence on $z$ in the sequence $\{(w^{*}_{\la_{k}}, y^{*}_{\la_{k}})\}$ for notational simplicity. 
  If $\liminf_{k\to\infty} y^{*}_{\la_{k}} < b$, then there exists a subsequence with $\lim_{j \to \infty}  y^{*}_{\la_{k_{j}}} =:  y^{*} < b$ and \begin{align}\label{e:5.7}
\nonumber \bar c(b) & \le \lim_{j\to\infty} F_{ p_{k_{j}}}(w^{*}_{\la_{k_{j}}}, y^{*}_{\la_{k_{j}}}) =  \lim_{j\to\infty}    h_{ p_{k_{j}}}(y^{*}_{ \lakj}) \\ &=  \lim_{j\to \infty} \frac{g'(y^{*}_{\lakj}) + \vphi(z) - \lakj}{\xi'(y^{*}_{\lakj}) } = \frac{g'(y^{*}) + \vphi(z) - \la_{z}^{\mathrm r}}{\xi'(y^{*}) } = h_{ p_{0}}(y^{*}).
  \end{align}  where $p_{k_{j}} = \vphi(z) - \la_{k_{j}} \in \La$. 
  We have  $\lim_{j\to\infty} p_{k_{j}} =p_{0} $ thanks to the definition of $\{\la_{k}\}$   and the fact that  $\la_{z}^{\mathrm r} = \sup\Lambda_{z} = \vphi(z) -p_{0}$, where $p_{0} = \inf\La$.
  
    The rest of the proof is divided into two cases. 

{\it Case 1: $ \la_{z}^{\mathrm r} = \vphi(z)$}.  We claim that $h_{\vphi(z) -\la_{z}^{\mathrm r}}(y^{*}) < \bar c(b)$ if $ \la_{z}^{\mathrm r} = \vphi(z)$, thus yielding a contradiction to \eqref{e:5.7}. Indeed,  if $ \la_{z}^{\mathrm r} = \vphi(z)$, then $p_{0}=0$. In view of \cndref{c-cond}(a) and \eqref{e:h'-expression}, the function $ h_{p_{0}}=  h_{0} $ is strictly increasing on a neighborhood of $b$. Hence $$h_{0}(y^{*} ) < \lim_{y\to b} h_{0}( y) = \bar c(b), $$ where the last equality follows from  Lemma \ref{lem-h-new}. 
Thus we must have \begin{displaymath}
b \le \liminf_{k\to\infty} y^{*}_{\la_{k}}  \le \limsup_{k\to\infty} y^{*}_{\la_{k}}  \le b.
\end{displaymath} This gives $\lim_{\la\uparrow \la_{z}^{\mathrm r}}  y^{*}_{\la_{k}}  =b$.

{\it Case 2: $\la_{z}^{\mathrm r} < \vphi(z)$}. Let  $\{(w^{*}_{\la_{k}}, y^{*}_{\la_{k}})\}$ and $y^{*}$ be as before.   Since the sequence  $\{ w^{*}_{\la_{k_{j}}}\}$ is bounded, there exists  a further subsequence, still denoted by $\{ w^{*}_{\la_{k_{j}}} \}$  
     with a slight abuse of notation, and some $w^{*} \le y^{*}$ so that $\lim_{j \to \infty}  w^{*}_{\la_{k_{j}}} =:  w^{*} $.
Thanks to \eqref{sect5:e-Fmax}, we have 
\begin{equation}\label{e:F*la_kj}
h_{p_{k_{j}}} ( w^{*}_{\la_{k_{j}}}) \ge F^{*}_{p_{k_{j}}} =F_{p_{k_{j}}} (w^{*}_{\lakj}, y^{*}_{\lakj}) =  h_{p_{k_{j}} } ( y^{*}_{\la_{k_{j}}}).  
\end{equation} 
Due to the monotonicity of $c$ and $\mu$ on $(a, \hat x_{\mu,c})$ and \eqref{e:h'-expression}, it follows that the function $h_{p_{k_{j}}} $ is strictly increasing on $(a, \hat x_{\mu,c})$. Together with \eqref{e:F*la_kj}, this implies that  $h_{ p_{k_{j}}} $ has a local maximum at some $\bar x_{\lakj}  \in  (w^{*}_{\lakj}, y^{*}_{\lakj}) $ with $h'_{ p_{k_{j}}} (\bar x_{\lakj}) = 0$.  Since the sequence  $\{ \bar x_{\la_{k_{j}}}\}$ is bounded, there exists  a further subsequence, still denoted by $\{ \bar x_{\la_{k_{j}}} \}$, and some $\bar x \le y^{*}$ so that $\lim_{j \to \infty}  \bar x_{\la_{k_{j}}} =:  \bar x $. Note that $w^{*} \le \bar x\le y^{*}$.
     
      We have \begin{displaymath}
\lim_{j \to \infty} r_{p_{k_{j}}} (\bar x_{\lakj})  = \lim_{j\to\infty} [c(\bar x_{\lakj}) + (\vphi(z) - \lakj)\mu(\bar x_{\lakj})] = c(\bar x) +(\vphi(z) - \la_{z}^{\mathrm r}) \mu(\bar x) =: r_{p_{0}}(\bar x),
\end{displaymath} and 
     \begin{equation}\label{e:ell-limit la_r}
\lim_{j \to \infty} h_{p_{k_{j}}} (\bar x_{\lakj}) = \lim_{j\to \infty} \frac{g'(\bar x_{\lakj}) + \vphi(z) - \lakj}{\xi'(\bar x_{\lakj}) } = \frac{g'(\bar x) + \vphi(z) - \la_{z}^{\mathrm r}}{\xi'(\bar x) } = h_{p_{0}} (\bar x),
\end{equation} Using these equations  in \eqref{e:h'-expression}  yields 
\begin{align*}
0& =\lim_{j \to \infty} h'_{p_{k_{j}}} (\bar x_{\lakj})  = \lim_{j\to\infty} \frac{m(\bar x_{\lakj})}{M[a, \bar x_{\lakj}]} [r_{p_{k_{j}}} (\bar x_{\lakj}) - h_{p_{k_{j}}}(\bar x_{\lakj})] 
\\& = \frac{m(\bar x)}{M[a, \bar x]} [r_{p_{0}}(\bar x) -h_{p_{0}}(\bar x) ] = h'_{p_{0}} (\bar x).
\end{align*} 

 The fact that $h'_{p_{0}}(\bar x) = 0$ allows us to define $ y_{p_{0}} : = \min\{x\in \I:  h'_{p_{0}}( x)= 0\}$. Note that  $\hat x_{\mu,c} <   y_{p_{0}}\le \bar x \le y^{*} < b$. 
Moreover, using the same arguments as those for Lemma \ref{lem-h-new}, we can show that   that  $h_{p_{0}}(\cdot)  $ is strictly increasing on $(a, y_{p_{0}})$ and strictly decreasing on $( y_{p_{0}}, b)$.
Since $y^{*} \in [y_{p_{0}}, b)$,    it follows from Lemma \ref{lem-h-new}  that 
\begin{equation}\label{e:h0(y*)>c(b)}
h_{p_{0}}(y^{*}) > \lim_{y\to b} h_{p_{0}}(y) = \bar c(b).
\end{equation} On the other hand, using the facts that $p_{k_{j}} \to p_{0}$ and  $y^{*}_{\lakj} \to y^{*}$ as $j\to\infty$, a similar calculation as that in  \eqref{e:ell-limit la_r}  yields   $\lim_{j\to\infty} h_{p_{k_{j}}}(y^{*}_{\lakj}) = h_{p_{0}}(y^{*}) $.  Furthermore, in view of \eqref{e:F*la_kj} and Lemma \ref{sect5:propF-max} (iii), we have \begin{displaymath}
h_{p_{0}}(y^{*})  = \lim_{j\to\infty} h_{p_{k_{j}}}(y^{*}_{\lakj})  =  \lim_{j\to\infty} F^{*}_{p_{k_{j}}} = \bar c(b);
\end{displaymath} contradicting \eqref{e:h0(y*)>c(b)}. Hence we must have  $\lim_{k\to\infty} y^{*}_{\la_{k}} = b$. The proof is complete. 
\end{proof}

\begin{prop}\label{lem-la-z} Under the conditions of Lemma \ref{prop-ybar-la-to-b},  
for any $z\in (0, z_{0})$,  
 there exists a $\la_{z} \in \Lambda_{z}$ such that \begin{equation}\label{e:z=Bid/Bxi}
 z = \mathfrak z\circ\Psi (\vphi(z) - \la_{z})= \frac{y^{*} (\la_{z}, z) - w^{*} (\la_{z}, z)}{\xi(y^{*} (\la_{z}, z)) - \xi(w^{*} (\la_{z}, z))},
\end{equation} where the functions $\Psi$  and $\mathfrak z$ are defined in \eqref{e:Psi(p)-fn}  and \eqref{eq-kappa-Q}, respectively, and $$(w^{*}(\lambda_{z}, z),  y^{*}(\lambda_{z}, z) ) =\Psi (\vphi(z) - \la_{z})= \argmax_{(w, y) \in \cR} F_{\vphi(z) - \la_{z}} (w, y).$$  
\end{prop}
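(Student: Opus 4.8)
The plan is to fix $z\in(0,z_{0})$ and apply the intermediate value theorem to the scalar function
\[
g(\la) := \mathfrak z\circ\Psi(\vphi(z)-\la)=\frac{y^{*}(\la,z)-w^{*}(\la,z)}{\xi(y^{*}(\la,z))-\xi(w^{*}(\la,z))},\qquad \la\in\Lambda_{z}.
\]
First I would record that $g$ is continuous. Indeed, by \eqref{e:Lambda-z-defn} the map $\la\mapsto\vphi(z)-\la$ is a decreasing affine bijection of $\Lambda_{z}$ onto $\La$, the argmax map $\Psi$ is continuous on $\La$ by Proposition~\ref{prop-wp-yp-cont}, and $\mathfrak z$ is continuous on $\cR$ (and extends continuously where $w^{*}=a$, since $\xi$ is continuous and $\xi(y)-\xi(w)>0$); hence $g$ is continuous on the interval $\Lambda_{z}=(-\infty,\la_{z}^{\mathrm r})$ (or $(-\infty,\la_{z}^{\mathrm r}]$). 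By Proposition~\ref{lem-z=Bid/Bxi}(iii), $g$ takes values in $(0,z_{0}]$. It then suffices to identify the one-sided limits of $g$ at the two ends of $\Lambda_{z}$ and show they bracket $(0,z_{0})$.

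For the right end, as $\la\uparrow\la_{z}^{\mathrm r}$ we have $p:=\vphi(z)-\la\downarrow p_{0}=\inf\La$, and Lemma~\ref{prop-ybar-la-to-b} gives $y^{*}(\la,z)\to b$. Since $b$ is a natural boundary, the potential satisfies $\lim_{x\to b}\xi(x)=+\infty$ (this is the statement that $\int^{b}M[a,v]\,dS(v)=\infty$, which holds for a natural right endpoint and is consistent with $\lim_{x\to b}\xi'(x)=\infty$ in Condition~\ref{diff-cnd}(b)); consequently $\xi(y^{*})\to\infty$ while $\xi(w^{*})$ stays bounded below. I would then split into the case $b<\infty$, where the numerator $y^{*}-w^{*}\le b-a$ is bounded so the ratio tends to $0$, and the case $b=\infty$, where $\xi'(y)\to\infty$ forces $y/\xi(y)\to0$ and hence the ratio again tends to $0$. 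In both cases $\lim_{\la\uparrow\la_{z}^{\mathrm r}}g(\la)=0$.

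For the left end I claim $\lim_{\la\to-\infty}g(\la)=z_{0}$, i.e. $\lim_{p\to\infty}\mathfrak z(\Psi(p))=z_{0}$, and this comparison/variational estimate is the part I expect to be the main obstacle. Because $\mathfrak z(w,y)=1/\xi'(\theta)$ for some $\theta\in(w,y)$ (mean value theorem) and $z_{0}=\sup_{x}1/\xi'(x)$ by \eqref{eq:z0defn}, one has $\sup_{(w,y)\in\cR}\mathfrak z(w,y)=z_{0}$; so given $\e>0$ fix a pair $(\wdt w,\wdt y)\in\cR$ with $\mathfrak z(\wdt w,\wdt y)>z_{0}-\e$. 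Writing $F_{p}(w,y)=\frac{Bg(w,y)-K}{B\xi(w,y)}+p\,\mathfrak z(w,y)$, the optimality $F_{p}(w_{p}^{*},y_{p}^{*})\ge F_{p}(\wdt w,\wdt y)$ together with the uniform bound $\frac{Bg(w,y)}{B\xi(w,y)}\le c(b)$ (which follows from $c$ being increasing, Condition~\ref{c-cond}(a), since that ratio is a $\xi'$-weighted average of $g'/\xi'\le c(b)$) yields
\[
\mathfrak z(w_{p}^{*},y_{p}^{*})\ \ge\ \mathfrak z(\wdt w,\wdt y)+\frac1p\Big(\tfrac{Bg(\wdt w,\wdt y)-K}{B\xi(\wdt w,\wdt y)}-c(b)\Big).
\]
As $p\to\infty$ the last term vanishes, so $\liminf_{p\to\infty}\mathfrak z(\Psi(p))\ge z_{0}-\e$; since $\e$ is arbitrary and $\mathfrak z\le z_{0}$, the limit equals $z_{0}$.

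Finally, $g$ is continuous on the interval $\Lambda_{z}$ with $\lim_{\la\to-\infty}g(\la)=z_{0}$ and $\lim_{\la\uparrow\la_{z}^{\mathrm r}}g(\la)=0$; since $z\in(0,z_{0})$ lies strictly between these limits, the intermediate value theorem produces a $\la_{z}$ in the interior of $\Lambda_{z}$ (hence $\la_{z}\in\Lambda_{z}$) with $g(\la_{z})=z$, which is exactly the identity \eqref{e:z=Bid/Bxi}. Besides the left-end estimate, the step requiring care is the justification of $\lim_{x\to b}\xi(x)=+\infty$ used in the right-end limit, which is where the natural-boundary hypothesis enters decisively.
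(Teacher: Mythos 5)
Your overall strategy --- continuity of $\la\mapsto\mathfrak z\circ\Psi(\vphi(z)-\la)$ on $\Lambda_z$, identification of the two one-sided limits, then the intermediate value theorem --- is exactly the paper's (its Steps 1--3). Your left-end argument is correct and in fact proves more than the paper needs: the paper only shows $\liminf_{\la\to-\infty}\mathfrak z\circ\Psi(\vphi(z)-\la)\ge z+\delta>z$ by comparing with a pair realizing supply rate $z+\delta$, whereas you show the limit equals $z_0$; moreover your uniform bound $Bg(w,y)/B\xi(w,y)\le c(b)$ (Cauchy mean value theorem plus monotonicity of $c$) replaces the paper's appeal to Lemmas 3.5--3.6 of \cite{HelmSZ:25} with a self-contained estimate. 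Those parts are fine.

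The genuine gap is in the right-end limit. From $y^*(\la,z)\to b$ you conclude that the denominator $\xi(y^*)-\xi(w^*)$ tends to $\infty$ because ``$\xi(w^*)$ stays bounded below,'' but a lower bound on $\xi(w^*)$ gives an \emph{upper} bound on the denominator, not a lower one. What is needed is that $\xi(w^*)$ does not grow as fast as $\xi(y^*)$, and nothing in Lemma~\ref{prop-ybar-la-to-b} prevents $w^*(\la,z)\to b$ along a subsequence as $\la\uparrow\la_z^{\mathrm r}$; in that case $\xi(w^*)\to\infty$ as well, $\xi(y^*)-\xi(w^*)$ can a priori stay bounded, and neither your ``$b<\infty$: bounded numerator'' step nor your ``$b=\infty$: $y/\xi(y)\to 0$'' step applies, since both implicitly compare against a fixed reference point $w_0<b$. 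The paper treats this as a separate case ($\limsup_k w^*_{\la_k}=b$) and disposes of it with the local estimate $\mathfrak z(w,y)<\e$ for all $b-\delta<w<y<b$ (Lemma 3.4 of \cite{HelmSZ:25}), which follows from $\lim_{x\to b}\xi'(x)=\infty$ in \eqref{e-sM-infty} together with the Cauchy mean value theorem, since then $\mathfrak z(w^*,y^*)=1/\xi'(\theta)$ for some $\theta\in(w^*,y^*)\subset(b-\delta,b)$. The correct dichotomy is therefore whether $w^*$ stays away from $b$, not whether $b$ is finite; the fix is short, but as written your right-end limit is not established. A minor additional slip: when $a$ is a natural boundary, $\xi$ is unbounded below near $a$, so ``$\xi(w^*)$ stays bounded below'' is itself unjustified --- though in that regime the denominator only becomes larger, so that case is harmless.
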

\begin{proof}  Note that for any $z\in (0, z_{0})$ and $\la \in \Lambda_{z}$, $\vphi(z) - \la \in \La$. Therefore $\Psi (\vphi(z) - \la)$ and $ \mathfrak z\circ\Psi (\vphi(z) - \la)$ are well-defined. The rest of the proof is divided into several steps. 

   {\em Step 1.} Thanks to Lemmas 
   3.5 and 3.6 of \cite{HelmSZ:26} and Condition \ref{c-cond}(a), for any $z\in (0, z_{0})$ and $\la \in \Lambda_{z}$ with $\vphi(z) -\la > 0$,  we have 
\begin{displaymath}
\sup_{(w, y) \in \cR} \frac{  Bg(w, y) - K}{(\vphi(z) -\la) B\xi(w, y)}  < \infty,
\ \text{ and hence } \
\lim_{\la\to-\infty}\sup_{(w, y) \in \cR} \frac{  Bg(w, y) - K}{(\vphi(z) -\la) B\xi(w, y)} = 0.
\end{displaymath} Thus for any $\e > 0$, there exists a $\la_{\e}  \in (-\infty, \vphi(z))  $ so that 
$$\sup_{(w, y) \in \cR} \frac{  Bg(w, y) - K}{(\vphi(z) -\la) B\xi(w, y)}  < \e \ \text{ for all } \ \la \le \la_{\e}.$$ 
This, in turn, implies  that  for all $(w, y) \in \cR$ and $\la \le \la_{\e}$,
\begin{align*}
F_{\vphi(z) - \la}(w, y) & = (\vphi(z) -\la)\bigg(\frac{y-w}{\xi(y) - \xi(w)} +  \frac{  Bg(w, y) - K}{(\vphi(z) -\la) B\xi(w, y)}   \bigg) 
 <  (\vphi(z) -\la) (\mathfrak z(w, y) + \e), 
\end{align*} 
or 
\begin{displaymath}
\mathfrak z(w, y)   > \frac{F_{\vphi(z) - \la}(w, y)}{\vphi(z) -\la} - \e. 
\end{displaymath} 
This holds in particular for the maximizing pair $(w^{*}, y^{*})  = (w^{*}(\lambda, z), y^{*}(\lambda, z)) \in \cR$, whose existence  follows from Lemma \ref{sect5:propF-max}(i). On the other hand, for any $z \in  (0, z_{0})$, let $\delta $ be a positive number so that $ z + \delta <  z_{0}$.  Proposition  \ref{prop-z=Bid/Bxi}(ii) implies that there exists a pair $(\wdt w, \wdt y) \in \cR$ so that $\mathfrak z(\wdt w, \wdt y)=z+ \delta$.   Then we have 
\begin{align*} 
 \mathfrak z(w^{*}, y^{*}) 
 &  > \frac{F_{\vphi(z) - \la}(w^{*}, y^{*})}{\vphi(z) -\la} - \e 
  \ge  \frac{F_{\vphi(z) - \la}(\wdt w, \wdt y)}{\vphi(z) -\la} - \e  \\
 & = \mathfrak z(\wdt w, \wdt y)  
 + \frac{   Bg(\wdt w,\wdt  y) - K}{(\vphi(z) -\la) B\xi(\wdt w, \wdt y)} -\e\\
 & = z+\delta  + \frac{   Bg(\wdt w,\wdt  y) - K}{(\vphi(z) -\la) B\xi(\wdt w, \wdt y)} -\e.
\end{align*} 
Since $\lim_{\la\to - \infty}   \frac{   Bg(\wdt w,\wdt  y) - K}{(\vphi(z) -\la) B\xi(\wdt w, \wdt y)}  =0$,   there exists a $\wdt\la_{\e} < \la_{\e}$ so that $ \frac{   Bg(\wdt w,\wdt  y) - K}{(\vphi(z) -\la) B\xi(\wdt w, \wdt y)}  > -\e$ for all $\la <  \wdt\la_{\e}$. Plugging this observation into the above displayed equation, we have \begin{displaymath}
  \mathfrak z(w^{*}, y^{*}) >   z+\delta -2\e, \quad \forall \la <  \wdt\la_{\e}.
\end{displaymath} Since $\e > 0$ is arbitrary, we have $$\lim_{\la \to -\infty}  \mathfrak z\circ\Psi(\vphi(z) -\la) =    \lim_{\la \to -\infty}   \mathfrak z(w^{*}, y^{*})  \ge   z+\delta > z.$$   

{\em Step 2.} We now show that \begin{equation}\label{5e:Bid/Bxi=0}
\lim_{\la \uparrow \la_{z}^{\mathrm r}}  \mathfrak z\circ\Psi (\vphi(z) - \la) 
    = \frac{ y^{*}(\la, z)  -  w^{*}(\la, z) }{ \xi(y^{*}(\la, z))  - \xi( w^{*}(\la, z))} = 0. 
\end{equation} To this end, let $\{\la_{k}\}$ be an increasing sequence that converges to $\la_{z}^{\mathrm r}$. To simplify notation, let us denote $(w^{*}_{\la_{k}}, y^{*}_{\la_{k}}) : = (w^{*}(\la_{k},z), y^{*}(\la_{k}, z)) $.     Lemma  \ref{prop-ybar-la-to-b} implies that  $\lim_{k\to \infty} y^{*}_{\la_{k}}  = b$. We have either $\limsup _{k\to\infty} w^{*}_{\la_{k}}  <  b$ 
 or $\limsup _{k\to\infty} w^{*}_{\la_{k}}  = b$.   In the former case,   we can pick a $w_{0} \in \I$ so that $w^{*}_{\la_{k}} < w_{0} <  y^{*}_{\la_{k}}$ for all $k$ sufficiently large. Then for all such $k$'s, we can write \begin{equation}\label{e2:Bid/Bxi}
0 \le \frac{B\id( w^{*}_{\la_{k}}, y^{*}_{\la_{k}})}{B\xi( w^{*}_{\la_{k}}, y^{*}_{\la_{k}})} = \frac{y^{*}_{\la_{k}} - w^{*}_{\la_{k}}  }{\xi(y^{*}_{\la_{k}}) - \xi(w_{0}) + \xi(w_{0}) - \xi(w^{*}_{\la_{k}}) }   \le  \frac{y^{*}_{\la_{k}} - a}{\xi(y^{*}_{\la_{k}}) - \xi(w_{0})}; 
\end{equation} the right-most expression of \eqref{e2:Bid/Bxi} converges to 0 as $k\to\infty$ due to \eqref{eq:xi_limit_b} if $b < \infty$;  if $b =\infty$, it  converges to 0 as $k\to\infty$ thanks to L'H\^opital's rule and \eqref{e-sM-infty}.  
We now consider the case $\limsup _{k\to\infty} w^{*}_{\la_{k}}  = b$, in which $b < \infty$; the case when $b=\infty$ can be handled in a similar fashion. For any $\e > 0$, thanks to Lemma 3.4 of \cite{HelmSZ:26}, 
there exists some $0 < \delta <  b-a  $ so that \begin{equation}\label{e:s5-Bid/Bxi}
\mathfrak z(w, y) < \e, \text{ for all } b-\delta< w< y < b.
\end{equation} 
Since  $\lim_{k\to\infty}y^{*}_{\la_{k}} = b$,  there exists some $K_{1} \in \NN$ so that $y^{*}_{\la_{k}} > b-\delta$ for all $k\ge K_{1}$. For each $k\ge K_{1}$, if $w^{*}_{\la_{k}}>  b-\delta$, then \eqref{e:s5-Bid/Bxi} says that 
\begin{displaymath}
0 \le \mathfrak z( w^{*}_{\la_{k}}, y^{*}_{\la_{k}})  < \e. 
\end{displaymath} 
If $w^{*}_{\la_{k}}\le b-\delta$, using the same argument as that for \eqref{e2:Bid/Bxi}, we can pick some $K_{2} \ge K_{1}$ so that  
\begin{displaymath}
0 \le  \mathfrak z( w^{*}_{\la_{k}}, y^{*}_{\la_{k}}) = \frac{B\id( w^{*}_{\la_{k}}, y^{*}_{\la_{k}})}{B\xi( w^{*}_{\la_{k}}, y^{*}_{\la_{k}})} \le  \frac{y^{*}_{\la_{k}} - a}{\xi(y^{*}_{\la_{k}}) - \xi(b-\delta)} < \e, \quad \text{ for all } k \ge K_{2}. 
\end{displaymath} 
Combining these two  cases, we have $0 \le  \mathfrak z( w^{*}_{\la_{k}}, y^{*}_{\la_{k}})  < \e $ 
  for all $k \ge K_{2}$. Since $\e> 0$ is arbitrary, we have   $\lim_{k\to\infty}  \mathfrak z( w^{*}_{\la_{k}}, y^{*}_{\la_{k}})  =0$; 
    establishing \eqref{5e:Bid/Bxi=0}. 

As a consequence of \eqref{5e:Bid/Bxi=0},   there exists some $\la < \la_{z}^{\mathrm r}$ so that $ \mathfrak z\circ\Psi (\vphi(z) - \la)  < z$.

{\em Step 3.} Finally, thanks to Propositions \ref{prop-wp-yp-cont} and \ref{prop-z=Bid/Bxi}, the function from $\Lambda_z$ to $[0,z_0]$  
 \begin{displaymath}
\la \mapsto  \mathfrak z\circ\Psi (\vphi(z) - \la) 
\end{displaymath}
is continuous. This, combined with the conclusions of Steps 1 and 2, implies that there exists a $\la_{z} \in \Lambda_{z}$ for which \eqref{e:z=Bid/Bxi} holds. The proof is complete.  
\end{proof}

We are now ready to present the main result of this section, which establishes the existence of an optimal MFC control result in \thmref{thm-main-MFGC}.

\begin{thm}\label{thm-mean-field-control} 
Assume Conditions \ref{diff-cnd},  \ref{c-cond},  
and    \ref{3.9-suff-cnd} hold. Then 
\begin{itemize}
  \item[{\em(i)}] for any $R\in  \A$,  we have $J(R, \kappa^{R}) \le  \sup_{(w, y) \in \cR}\Upsilon(w, y),  $ where \begin{displaymath}
  \Upsilon(w, y): =   \frac{ g(y) - g(w) + \vphi(\mathfrak z(w, y))(y-w) -K}{\xi(y) - \xi(w)}, \quad (w, y) \in \cR;
\end{displaymath}  
  \item[{\em(ii)}] there exists some $( w^{\star},   y^{\star}) \in \cR$ such that $\sup_{(w, y) \in \cR}\Upsilon(w, y) =\Upsilon(w^{\star},   y^{\star}) $;
  \item[{\em(iii)}]  the $(w^{\star},   y^{\star})$-policy $Q^{\star}\in   \A$ satisfies $J(Q^{\star}, \kappa^{Q^{\star}}) = \Upsilon(w^{\star},   y^{\star}) $. Thus $Q^{\star}$ is an admissible  optimal policy for the mean field control problem \eqref{e:sec2control}. 
\end{itemize}
 \end{thm}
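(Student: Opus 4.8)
The plan is to prove the three parts in order, with the upper bound in (i) being the crux and parts (ii)--(iii) following relatively smoothly from the machinery already assembled.

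For part (i), I would fix an arbitrary $R \in \A$ and set $z := \kappa^R \in [0, z_0]$ (the bound $\kappa^R \le z_0$ is Lemma \ref{lem2-transversality}). The goal is to bound $J(R, \kappa^R)$ by $\sup_{(w,y)} \Upsilon(w,y)$. The natural route is to invoke the Lagrangian bound of Lemma \ref{lem5.1}: for the specific $z = \kappa^R$ and any $\la \in \R$, the left-hand side of \eqref{e-lagrange-bnd} dominates $J(R, \kappa^R) - \la(\kappa^R - z)$, but since $z = \kappa^R$ the penalty term vanishes and we get $J(R, \kappa^R) \le \limsup_{t\to\infty}[\lan c, \mu_{0,t}^R\ran + \lan (\vphi(z)-\la)B\id - K, \mu_{1,t}^R\ran] + \la z$. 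By Proposition \ref{prop-Fmax}, for $\la \in \Lambda_z$ the optimal $(w,y)$-policy bounds the classical problem, so the $\limsup$ term is at most $F^*_{\vphi(z)-\la} = F_{\vphi(z)-\la}(w^*(\la,z), y^*(\la,z))$. Hence $J(R, \kappa^R) \le F_{\vphi(z)-\la}(w^*, y^*) + \la z$ for every $\la \in \Lambda_z$.

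The decisive step is to choose $\la = \la_z$ from Proposition \ref{lem-la-z}, which guarantees (for $z \in (0, z_0)$) the fixed-point identity $z = \mathfrak z\circ\Psi(\vphi(z) - \la_z) = \mathfrak z(w^*(\la_z,z), y^*(\la_z,z))$. Writing $(w^*, y^*) = \Psi(\vphi(z) - \la_z)$, I would expand $F_{\vphi(z)-\la_z}(w^*, y^*) + \la_z z$ using the definition of $F$ and substitute $z = \mathfrak z(w^*, y^*) = \frac{y^* - w^*}{\xi(y^*) - \xi(w^*)}$. The algebra should collapse the $\la_z$-dependence exactly: the $+\la_z z$ term cancels the $-\la_z B\id / B\xi$ contribution inside $F_{\vphi(z)-\la_z}$, leaving precisely $\frac{g(y^*) - g(w^*) + \vphi(z)(y^* - w^*) - K}{\xi(y^*) - \xi(w^*)}$. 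Since $z = \mathfrak z(w^*, y^*)$, this equals $\Upsilon(w^*, y^*) \le \sup_{(w,y)} \Upsilon(w,y)$, yielding the bound. The boundary cases $z = 0$ (where $R \in \AF$ and $J(R, 0) = \bar c(b)$, handled via Lemma \ref{sect5:propF-max}(iii) and a limiting argument) and $z = z_0$ would need separate, brief treatment, likely by continuity or by the do-nothing comparison. I expect this cancellation-plus-fixed-point step to be the main obstacle, since it requires the identity \eqref{e:z=Bid/Bxi} to hold with the \emph{same} maximizing pair that realizes the Lagrangian bound, and care is needed to confirm $\la_z \in \Lambda_z$ so that Proposition \ref{prop-Fmax} applies.

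For part (ii), I would establish existence of a maximizer of $\Upsilon$ over $\cR$. The function $\Upsilon$ is continuous on $\cR$ by continuity of $g$, $\xi$, $\vphi$, and $\mathfrak z$; the main work is ruling out escape of a maximizing sequence to the boundary $\partial\cR$, i.e. to $w \to a$, $y \to b$, or $w, y$ coalescing. One shows $\Upsilon(w,y) \le \sup_{p \in \La} F_p^*$-type bounds degenerate appropriately, or more directly that as $(w,y)$ approaches the degenerate boundary the value drops below an interior value already achieved, using the asymptotics $h_p(y) \to \bar c(b)$ as $y \to b$ (Lemma \ref{lem-ell-limit at b}) and the behavior of $\mathfrak z$ near the diagonal and near $b$ established in Step 2 of Proposition \ref{lem-la-z}. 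A compactness/coercivity argument then delivers an interior maximizer $(w^\star, y^\star) \in \cR$. Finally, for part (iii), I would compute the reward of the $(w^\star, y^\star)$-policy directly: Proposition \ref{prop-Rwy} gives $\kappa^{Q^\star} = \mathfrak z(w^\star, y^\star)$ and $J(Q^\star, \kappa^{Q^\star}) = F_{\vphi(\kappa^{Q^\star})}(w^\star, y^\star) = \Upsilon(w^\star, y^\star)$, which matches the upper bound from (i). Since $Q^\star \in \A$, it attains the supremum and is therefore optimal for \eqref{e:J(QQ)}.
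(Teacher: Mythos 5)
Your proposal follows essentially the same route as the paper's proof: the Lagrangian bound of Lemma \ref{lem5.1} with the penalty term vanishing at $z=\kappa^R$, Proposition \ref{prop-Fmax} to dominate the unconstrained problem by $F^*_{\vphi(z)-\la}$, the fixed-point multiplier $\la_z$ of Proposition \ref{lem-la-z} to collapse the algebra to $\Upsilon(\hat w,\hat y)$, a coercivity/boundary-degeneration argument for the maximizer in (ii), and the direct computation via Proposition \ref{prop-Rwy} and \eqref{e:F_K} in (iii). Two loose ends are worth closing. First, your endpoint case $z=z_0$ never actually occurs: under Condition \ref{3.9-suff-cnd}, Proposition \ref{lem-z=Bid/Bxi}(iii) gives the strict inequality $\mathfrak z(w,y)<z_0$ on all of $\cR$, and combining this with \eqref{e2-transversality} yields $\kappa^R<z_0$ for every $R\in\A$; so neither continuity nor a do-nothing comparison is needed. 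Second, $\kappa^R=0$ does not force $R\in\AF$ (an infinite-intervention policy can have zero long-term supply rate), though the tool you cite, Lemma \ref{sect5:propF-max}(iii) applied with multipliers $\la_0^{\mathrm r}-\delta$, is exactly what the paper uses for that case to obtain $J(R,\kappa^R)\le\bar c(b)$; to then conclude $J(R,\kappa^R)\le\sup_{(w,y)\in\cR}\Upsilon(w,y)$ one still needs $\bar c(b)<\sup\Upsilon$, which the paper extracts in part (ii) from Condition \ref{c-cond}(b) via the sandwich $F_{\vphi_{\min}}\le\Upsilon\le F_{\vphi_{\max}}$ and $\vphi_{\min}\in\La$ --- a step your outline leaves implicit.
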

\begin{proof} (i) Note that $\kappa^R =0$ for any $R\in   \AF$. On the other hand, for any $R\in  \AI$,  we  can    use Proposition \ref{prop-z=Bid/Bxi}(iv) and \eqref{e2-transversality}  to derive $$\kappa^{R}= \limsup_{t\to\infty} \lan B\id, \mu^{R}_{1,t}\ran = \limsup_{t\to\infty} \lan \frac{B\id}{B\xi} B\xi, \mu^{R}_{1,t}\ran < z_{0}. $$ Combining these two cases, we have $\kappa^{R} < z_{0}$ for all $R\in  \A$.

Let $z\in (0, z_{0}) $ and consider an arbitrary $R\in  \A$   with $z = \kappa^{R}$. We use the value $\la_{z} $  and the corresponding optimizing pair $( \hat{w},  \hat{y})= ( \hat{w}(\la_{z}, z),  \hat{y}(\la_{z}, z))$ from Proposition \ref{lem-la-z}; here we use the notation $\,\hat{}\,$ rather than $^{*}$ for the pair $( \hat{w},  \hat{y})$  in order to avoid confusion with the pair $( w^{\star},   y^{\star})$ appearing in the statement of the theorem.  Since $\la_{z}\in \Lambda_{z}$, we have $\vphi(z) - \la_{z} \in \La$. 
 Then it follows that  
\begin{align} \label{e1-sect5-thm-proof}
\nonumber J(R, \kappa^{R}) & =  \liminf_{t\to\infty} [ \lan  c,\mu_{0,t}^{R}\ran + \lan  \vphi(z)  B\id- K,\mu_{1,t}^{R}\ran ] \\
 \nonumber&   =  \liminf_{t\to\infty} [ \lan   c,\mu_{0,t}^{R}\ran + \lan  \vphi(z)  B\id- K,\mu_{1,t}^{R}\ran ]- \lambda_{z}\Big ( \limsup_{t\to\infty}   \lan   B\id,\mu_{1,t}^{R}\ran-z\Big) \\ 
 \nonumber&   \le  \limsup_{t\to\infty} [ \lan   c,\mu_{0,t}^{R}\ran + \lan ( \vphi(z) -\lambda_{z})  B\id- K,\mu_{1,t}^{R}\ran ] + \lambda_{z}  z \\ 
\nonumber &   \le  \sup_{(w, y) \in \cR} F_{\vphi(z) - \la_{z}}(w, y) + \la_{z} z \\
\nonumber &   =   \frac{  g( \hat{y})- g( \hat{w}) + (\vphi(z) - \la_{z}) ( \hat{y} -  \hat{w}) - K}{\xi( \hat{y})- \xi( \hat{w})} + \la_{z} z\\ 
 \nonumber &   = \frac{  g( \hat{y})- g( \hat{w}) + \vphi(\mathfrak z( \hat{w}, \hat{y} ) )  ( \hat{y} -  \hat{w}) - K}{\xi( \hat{y})- \xi( \hat{w})}\\ 
 & = \Upsilon ( \hat{w}, \hat{y}), 
\end{align} where the first and second inequalities  follows from \eqref{e-lagrange-bnd} and Proposition \ref{prop-Fmax}, respectively, and the last equality follows from   Proposition \ref{lem-la-z}.

 Now we consider an arbitrary $R\in  \A$ with $\kappa^{R} =  \limsup_{t\to\infty}   \lan   B\id,\mu_{1,t}^{R}\ran= 0$. Recall the set $\Lambda_{0}$ given in \eqref{e:Lambda-z-defn}. Also let  $\la^{\mathrm r}_{0}: =\sup  \Lambda_{0}$ and   $\e > 0$ be an arbitrary positive number.  Note that $\vphi(0)-\la_{0}^{\mathrm r}= p_{0} = \inf \La$. Moreover, thanks to Lemma \ref{sect5:propF-max}(iii), for any $\e> 0$,  we can pick a $\delta > 0$ so that $F^{*}_{\vphi(0)-\la_{0}^{\mathrm r}+\delta} < \bar c(b) + \e$. 
  Then we have 
\begin{align*} 
 J(R, \kappa^{R}) & =  \liminf_{t\to\infty} [ \lan  c,\mu_{0,t}^{R}\ran + \lan  \vphi(0)  B\id- K,\mu_{1,t}^{R}\ran ] \\
 &   =  \liminf_{t\to\infty} [ \lan   c,\mu_{0,t}^{R}\ran + \lan  \vphi(0)  B\id- K,\mu_{1,t}^{R}\ran ]- (\la_{0}^{\mathrm r} -\delta)\Big ( \limsup_{t\to\infty}   \lan   B\id,\mu_{1,t}^{R}\ran-0\Big) \\ 
 &  \le  \limsup_{t\to\infty} [ \lan   c,\mu_{0,t}^{R}\ran + \lan ( \vphi(0) -\la_{0}^{\mathrm r}+\delta)  B\id- K,\mu_{1,t}^{R}\ran ]\\
 &   \le F^{*}_{ \vphi(0) -\la_{0}^{\mathrm r}+\delta} \\
 & < \bar c(b)+ \e,
 \end{align*} where we used \eqref{e-lagrange-bnd} and Proposition \ref{prop-Fmax} (b) to derive the first two inequalities.  
  Since $\e> 0$ is arbitrary, we have \begin{equation}
\label{e2-sect5-thm-proof}
 J(R, \kappa^{R}) \le 
  \bar c(b).  
\end{equation}

A combination of \eqref{e1-sect5-thm-proof} and \eqref{e2-sect5-thm-proof} gives us 
\begin{equation}\label{e:sec5-J(R,R)-bd}
J(R, \kappa^{R}) \le \sup_{(w, y) \in \cR}  \Upsilon (w,y) \vee \bar c(b). 
\end{equation}

(ii) We now show that there exists a pair $( w^{\star},   y^{\star}) \in \cR$ such that $\sup_{(w, y) \in \cR}\Upsilon(w, y) =\Upsilon(w^{\star},   y^{\star}) $ and that $\Upsilon(w^{\star},   y^{\star})  > \bar c(b)$. To this end, we note that for any $(w, y) \in \cR$, we have $\mathfrak z(w, y) 
\in (0, z_{0})$ and hence \begin{displaymath}
  \vphi_{\min} \le \vphi\big( \mathfrak z(w, y)\big) \le \vphi_{\max} < \infty,
\end{displaymath} where  $\vphi_{\min}: = \min_{z\in [0, z_{0}]} \vphi(z)$ and  $\vphi_{\max}: = \max_{z\in [0, z_{0}]} \vphi(z)$. Consequently, \begin{align}\label{e:Upsilon<F_max} \nonumber 
F_{\vphi_{\min}}(w, y) & = \frac{ g(y) - g(w) + \vphi_{\min}(y-w) -K}{\xi(y) - \xi(w)} \le \Upsilon (w, y)\\ & \le  \frac{ g(y) - g(w) + \vphi_{\max}(y-w) -K}{\xi(y) - \xi(w)}= F_{\vphi_{\max}}(w, y).
\end{align} At one hand, the assumption  that $\vphi_{\min} \in \La$ implies that there exists a pair $(\wdt w, \wdt y)\in \cR$ so that \begin{displaymath}
\Upsilon (\wdt w,\wdt y)\ge  F_{\vphi_{\min}} (\wdt w,\wdt y) > \bar c(b).
\end{displaymath} On the other hand, the proof of Proposition \ref{prop-Fmax} reveals that the maximum value of  $F_{\vphi_{\max}}$ (and hence $\Upsilon$) on the boundary of $\cR$ is less than or equal to $\bar c(b)$. Therefore the maximum value of $\Upsilon$ is achieved at some point $( w^{\star},   y^{\star}) \in \cR$ with \begin{displaymath}
\Upsilon(w^{\star},   y^{\star}) =\sup_{(w, y) \in \cR}\Upsilon(w, y) > \bar c(b).
\end{displaymath} Note that the maximizing pair $( w^{\star},   y^{\star})$ may have $w^{\star} =a$ if $a$ is an entrance boundary. Assertion (ii) is established. This, in turn, leads to assertion (i) thanks to \eqref{e:sec5-J(R,R)-bd}.

(iii) Finally we notice that the $ ( w^{\star},   y^{\star})$ policy  $Q^{\star}$ 
is in $ \A$ with $\kappa^{Q^{\star}} = \mathfrak z(w^{\star}, y^{\star})   = :z$  thanks to  \eqref{eq-kappa-Q}. Moreover, using \eqref{e:F_K},  the long-term average reward of $Q^{\star}$ is equal to $$J(Q^{\star}, \kappa^{Q^{\star}}) =  F^{\star}_{\vphi(z)} =\frac{g(y^{\star})- g(w^{\star}) + \vphi(z)  (y^{\star} - w^{\star}) - K}{\xi(y^{\star})- \xi(w^{\star})}= \Upsilon(w^{\star},   y^{\star}).$$   The proof is complete. 
\end{proof}

\begin{rem}\label{rem-MFC-cond26}
  {   We discuss the role of Condition \ref{c-cond}(b). We observed in Remark \ref{rem-about-condition2.6} that it is equivalent to requiring $\vphi_{\min} \in \La$. Suppose it fails and that $\vphi_{\max} \notin \La$. Then $F^*_{\vphi_{\max}} =  \sup_{(w, y) \in \cR} F_{\vphi_{\max}} (w, y)  \le \bar c(b) $. Consequently, for any $R\in \A$, we have from \eqref{e:sec5-J(R,R)-bd} and \eqref{e:Upsilon<F_max} that \begin{align*}
    J(R, \kappa^R) \le \sup_{(w, y) \in \cR}  \Upsilon (w,y) \vee \bar c(b) \le \sup_{(w, y) \in \cR} F_{\vphi_{\max}} (w, y) \vee \bar c(b)  = F^*_{\vphi_{\max}}  \vee \bar c(b)\le  \bar c(b).
  \end{align*}  Recall from \eqref{e:reward-zero-policy} that $J(\mathfrak R, \kappa^{\mathfrak R}) = \bar c(b)$. Thus, in this case, the do-nothing  policy $\mathfrak R$ is an optimal policy for the MFC problem \eqref{e:sec2control}.}
\end{rem}

\begin{rem}\label{rem-optim-inAp}  
Theorem \ref{thm-mean-field-control} asserts that the $(w^{\star},   y^{\star})$-policy $Q^{\star}$ is an optimal mean field impulse   strategy in  the class $ \A$. 
In contrast, \cite{Chris-21} only derives the optimality of a threshold impulse strategy in the class of {\em stationary strategies}. 
\end{rem}

\begin{rem}[Comparison of MFG and MFC] \label{rem-MFG-MFC} 
Theorems \ref{prop-game-equi} and \ref{thm-mean-field-control} show that   equilibrium and   optimal MFC strategies exist and both are of threshold type policies under Conditions \ref{diff-cnd}, \ref{c-cond},  
and \ref{3.9-suff-cnd}. Moreover, thanks to \eqref{eq:MFG-value}, the MFG value is given by  $F_{p^e}(w^e,y^e) = \Upsilon(w^e,y^e)$, where $(w^e,y^e) \in \cR$ and $p^e= \vphi(\mathfrak z(w^e,y^e))$ are determined in the statement of Theorem \ref{prop-game-equi}. Compare this with Theorem \ref{thm-mean-field-control} and it is obvious that the equilibrium MFG value is less than or equal to  the optimal value of the  MFC, which is equal to $\sup_{(w, y)\in \cR} \Upsilon(w, y)$. This difference stems from MFC's centralized maximization of collective reward, compared to MFG's focus on individual agent strategies. On the other hand, MFG is more robust to individual deviations, as agents cannot improve their reward  by deviating from the equilibrium.  MFC, however, relies on centralized enforcement for optimality and is less robust to such deviations.  Indeed, given the optimal MFC supply rate $z^\star$ and  the corresponding unit price $\vphi(z^\star)$, an individual agent, if permitted, might  deviate from the optimal MFC strategy by selecting an alternative policy and thereby attain a higher individual reward. An implication of this is that the optimal MFC control  is not necessarily an equilibrium for  the MFG.

To illustrate these differences, we  study two examples in the next section. 

 \end{rem}

\section{Examples}\label{sect:example}
\begin{example}\label{exam-logistic}
We consider a stochastic logistic  growth model given by the SDE:  
\begin{equation}\label{eq-logistic}
dX_{0}(t) = r X_{0}(t)\bigg(1 -  \frac{X_{0}(t)}{\delta}\bigg) dt + \sigma X_{0}(t) dW(t), \quad X_{0}(0) =1,
\end{equation}  where $W$ is a one-dimensional standard Brownian motion, and  $r, \delta$, and $  \sigma$ are positive constants. It is straightforward to verify that  the state space of $X_{0}$ is  $(0, \infty)$, with both 0 and $\infty$ being natural boundaries. In addition, the scale function $S$ and the speed measure $M$ are absolutely continuous with respect to the Lebesgue measure with densities 
 $$s(x) = 
 x^{-\alpha} e^{\theta (x-1)},  \quad  
 \text{ and }\quad m(x) = \frac{2}{\sigma^{2}x^{2} s(x)} =  
 \frac{2}{\sigma^{2}} x^{\alpha - 2} e^{- \theta (x-1)}.  $$ where $\alpha: =\frac{2r}{\sigma^2} $ and $\theta: =\frac{2r}{\delta\sigma^2}  $.
 We have\begin{displaymath}
S(0, y] = \int_{0}^{y}  x^{-\alpha} e^{\theta (x-1)}dx, 
\end{displaymath} and $$M(0, y] = \int_{0}^{y} m(x) dx =   \frac{2e^{\theta} }{\sigma^{2}}   \int_{0}^{y} x^{\alpha- 2} e^{-\theta x} dx =   \frac{2 \theta^{1-\alpha}e^{\theta} }{\sigma^{2}}     \gamma\left( \alpha-  1, \theta y\right),
$$ where $\gamma$ is the lower incomplete gamma function $
\gamma(s, z) = \int_0^z t^{s-1} e^{-t} dt.
$ 
Then 
\begin{displaymath}
\xi(x) =\int_{1}^{x} M[0, v] dS(v) = \frac{2\theta^{1-\alpha}}{\sigma^{2}}   \int_1^x \gamma\left( \alpha-  1, \theta v\right) v^{-\alpha} e^{\theta v} dv.
\end{displaymath}

 If $2r > \sigma^{2}$ or $\alpha > 1$, detailed computations reveal that $$S(0, y] = \infty, \quad M(0, y]  <\infty,  \ \text{ for any } y > 0,  \quad \text{ and } \lim_{x\to\infty} s(x) M(0, x] =\infty. $$  This verifies Condition \ref{diff-cnd}(a,b); Condition \ref{diff-cnd}(c) trivially holds. In addition,   we have \begin{displaymath}
M(0, \infty) =   \frac{2 \theta^{1-\alpha}e^{\theta} }{\sigma^{2}}     \Gamma( \alpha-  1 ) < \infty, 
\end{displaymath} where $\Gamma(\alpha) : = \int_{0}^{\infty} x^{\alpha-1} e^{-x} dx$ is the gamma function, and  $\lim_{x\downarrow 0} \xi(x) =  -\infty.$ Next we compute 
\begin{displaymath}
\ell(x) = 
\frac1{\xi'(x)} =\frac1{ s(x) M(0, x]} = \frac{\int_{0}^{x} \mu(u) dM(u)}{M(0, x]} =  \frac{\sigma^{2}   x^{ \alpha} }{2\theta^{1-\alpha}    e^{\theta x}    \gamma\big(\alpha  - 1, \theta x\big)}.
\end{displaymath} In view of Lemma \ref{lem-h-new},  the maximum value  $
z_{0} := \sup_{x> 0} \frac1{\xi'(x)} $  occurs at $x^{*}$, 
  where $x^{*}$ is the unique solution to the equation $\ell'(x)  =0$, which leads to \begin{equation}\label{eq-x*-exm}
e^{\theta x} \gamma(\alpha-1, \theta x) (\alpha-\theta x) = \theta^{\alpha-1} x^{\alpha-1}.
 \end{equation} Note that the function $\mu(x) =r x(1-\frac{x}{\delta})$ is strictly increasing on $(0, \hat x_{\mu,c})$ and strictly decreasing on $(\hat x_{\mu,c}, \infty)$, where $\hat x_{\mu,c} = \frac{\delta}{2 }$. Thus it follows that $x^{*} > \hat x_{\mu,c}$. On the other hand, \eqref{eq-x*-exm} implies that $ \alpha-\theta x^{*} > 0$ or $x^{*} <  \delta$. Using \eqref{eq-x*-exm}, we can rewrite \begin{displaymath}
z_{0} = \ell(x^{*}) =\frac{\sigma^{2}}{2}x^{*}(\alpha- \theta x^{*}). 
\end{displaymath} Since $x^{*} \in (\frac \delta{2 }, \delta)$, we have $ 0  < z_{0} < \frac{\sigma^{2}}{2} \frac \delta{2 }\big(\alpha- \theta \frac \delta2\big) = \frac{\delta r}{4}.$
 
 Now we consider,  for illustrative purposes,  the price function $\varphi(z) := \frac{3 }{  3 + z+2\sin z},  z \ge 0$, as well as the running reward function $c(x) : =1-e^{-x}, x\ge 0 $. Obviously, both Conditions \ref{c-cond} and \ref{3.9-suff-cnd} are satisfied. Note that $\vphi$ is not a monotone function. Moreover, we have 
 \begin{displaymath}
\bar c(b) = \int_{0}^{\infty} c(x) \pi(dx) = \frac1{M(0, \infty)} \int_{0}^{\infty} c(x) m(x) dx = 1- \bigg( \frac{2r}{2r +\delta\sigma^{2}}\bigg)^{  \alpha-1}.
\end{displaymath} The function $g(x): = \int_{1}^{x} \int_{0}^{v} c(y) dM(y) dS(v), x> 0$ doesn't have an analytic form.  

For numerical demonstration, we set $r=\delta= 5,  \sigma=1$,  and $K=0.5$.  Numerical calculations reveal that $$\vphi_{\min}: = \min_{z\in [0, z_0]} \vphi(z) \ge \min_{z\in [0, \frac{\delta r}{4}]} \vphi(z) = 0.326668 \in \La. $$  This verifies  Condition \ref{c-cond}(b). Consequently, by Theorems \ref{prop-game-equi} and \ref{thm-mean-field-control}, mean field game equilibrium and optimal  mean field control strategies exist and admit  explicit characterizations.   The numerical results are summarized in    Table \ref{table1}. Note that the optimal value of the mean field control problem exceeds the equilibrium value of the mean field game problem by 0.242790.
\begin{table}[ht]
  \centering   \caption{Numerical Results of  Mean Field Game and Control Problems for Example \ref{exam-logistic}}
  \begin{tabular}{c|ccccc}
\hline
 Problem & $w^e$ or $w^{\star}$ & $y^e$ or $y^{\star}$ & Supply Rate & Price & Value\\ \hline
 MFG   &  1.279499  & 5.368681 &  5.221743 & 0.463276 & 2.674072 \\
 MFC  &  1.106232 &  6.306876 &  4.559874 & 0.537337 & 2.916862 \\ \hline
\end{tabular}
\label{table1}
\end{table} 

However, the optimal mean field control policy is not robust in the sense that an individual agent may achieve a superior long-term average reward if the unit price of impulses is set to  $p =\vphi(z^{\star})$, in which $z^{\star}$ is the optimal supply rate of the optimal mean field control policy. Indeed,  in this numerical example, with  the optimal  mean field control supply rate $z^{\star} = 4.559874$ and thus the price $p = \vphi(z^{\star}) = 0.537337$, an individual agent can adopt a different  $(w, y)$-policy with $w=1.326678$ and $ y= 5.216696$ and achieve a  long-term average reward of 3.064301, which is 0.147439 greater than the optimal mean field control value.
\end{example}

\begin{example}\label{exm2-LOksendal}
  We consider a population growth model in a stochastic environment proposed by \cite{L-Oksendal}:
  \begin{equation}\label{eq-pop2}
    dX(t)= rX(t)({ b}-X(t))dt + \sigma X(t) ({ b}- X(t))d W(t),
  \end{equation} where $W$ is a standard one-dimensional Brownian motion, $r>0$ is the growth rate, $b > 0$ is the carrying capacity, and $\sigma > 0$ is the volatility. The state space of $X$ is $\I = (0, b)$, with both 0 and $b$ being natural boundaries. The scale function $S$ and the speed measure $M$ are absolutely continuous with respect to the Lebesgue measure with densities
  \begin{displaymath}
    s(x) = x_0^\beta (b-x_0)^{-\beta} (b-x)^\beta x^{-\beta}, \quad m(x) = \frac{2}{\sigma^2} x_0^{ -\beta} (b-x_0)^{\beta} x^{\beta-2} (b-x)^{-\beta -2}, \ \ x\in (0,b),
  \end{displaymath} where  $x_{0}\in \I$  and $\beta : = \frac{2r}{b \sigma^{2}} >1$. One can verify that $0$ is nonattracting, with 
  \begin{align*}
    S(0, y]  = \infty, \   M[0, y] < \infty, \ \forall y\in (0, b), \quad \text{ and } \quad 
    \lim_{x\to b} s(x) M[0, x] = \infty.
  \end{align*} This verifies Condition \ref{diff-cnd}.  Moreover, detailed calculations reveal that $ \lim_{y\to b} M[0,y] =\infty$. 
  
  We next take the price and the running reward functions to be \begin{displaymath}
\vphi(z) : = \frac{2}{   3+ z + \cos(2  z)}, \ z\ge 0, \text{ and } c(x) : = 1- e^{-3x} + 0.01 x^{0.25},\  x \in (0, b).
\end{displaymath} It is obvious that both Conditions \ref{c-cond}(a) and \ref{3.9-suff-cnd} are satisfied.  

For numerical demonstration, we set $r=0.75, b= 5, \sigma=0.5$, and $K=0.2$. As in the previous example, we can verify Condition  \ref{c-cond}(b) numerically, which, in turn, establishes the existence of mean-field equilibrium and optimal mean-field control policies.   The numerical results are summarized in Table \ref{table2}. Note that the optimal value of the mean field control problem exceeds the equilibrium value of the mean field game problem by 0.282482.

\begin{table}[ht]
  \centering   \caption{Numerical Results of  Mean Field Game and Control Problems for Example \ref{exm2-LOksendal}}
  \begin{tabular}{c|ccccc}
\hline
 Problem & $w^e$ or $w^{\star}$ & $y^e$ or $y^{\star}$ & Supply Rate & Price & Value\\ \hline
 MFG   &  2.707186&  4.889822 &  2.560956 &  0.335620  & 1.249932 \\
 MFC  &  2.750384 & 4.997066  & 1.638624   & 0.548274    & 1.532414 \\ \hline
\end{tabular}\label{table2}
\end{table}
As we observed in the previous example, the optimal mean field control policy is not robust.  Corresponding to the price $p =\vphi(z^{\star})= 0.548274$, an individual agent may adopt the    $(2.787973, 4.737556)$-policy and achieve a  long-term average reward of  1.834061, which is 0.301648  greater than the optimal mean field control value.
\end{example}

%

\appendix
  \section{Appendix}  
\label{sect-appendix}
  This appendix presents several technical proofs.  

  \begin{proof} [Proof of Proposition~\ref{prop-z=Bid/Bxi}] 
(i) We first observe  that thanks to \eqref{e-sM-infty}, 
  $\lim_{x\to b} \frac{1}{\xi'(x)} =0$. On the other hand, since $a$ is a non-attracting point by Condition \ref{diff-cnd}, we have $S(a, y] =\infty$ for any $y \in \R$ and hence $s(a+) =\infty$. Then we can apply   L'H\^ospital's rule to compute   
  \begin{align*}
    \lim_{x\downarrow a} \frac{1}{\xi'(x)} =   \lim_{x\downarrow a} \frac{\frac{1}{s(x)}}{M(a, x] } = \lim_{x\downarrow a} \frac{\mu(x) \frac{2}{s(x) \sigma^{2}(x)}}{m(x)} =\mu(a),
  \end{align*}  which is finite thanks to Condition \ref{diff-cnd}(c).  Therefore $z_{0} < \infty$ as desired.

(ii)  Note that $\frac1{\xi'(x)} = \frac1{s(x) M[a, x]} > 0$ for any $x\in \I$. This together with the observation that  $\lim_{x\to b} \frac{1}{\xi'(x)} =0$ 
 imply that  there exists a   $ y_{0}   \in \I$ so that $\sup_{x\in \I} \frac{1}{\xi'(x)} = \frac{1}{\xi'(y_{0})}  $. In case there are multiple  maximizers for the function $\frac{1}{\xi'(x)}$, we pick $y_{0} $ to be the largest one.  
 
 Let $z\in (0, z_{0})$. 
 Using the continuity of the function $\frac{1}{\xi'(x)}$, we can find a pair $(w, y_{1}) $ so that  
 $ y_{0} < w < y_{1} < b$  and $z < \frac{1}{\xi'(x)}$  
 for all $x\in [w, y_{1}]$. 
  We now  consider the continuous function \begin{equation}
\label{e-sec2-f-defn}
 f(x) = f(x; w, z) : = z(\xi(x) - \xi(w)) - (x-w), \ \ x\ge w.
\end{equation} We have $f(w) = 0$. Moreover, by the mean value theorem, \begin{equation}
\label{e1-prop32-pf}
f(y_{1}) =z(\xi(y_{1}) -\xi(w)) - (y_{1}-w) = (y_{1}-w)  ( z \xi'(\theta) - 1)< 0,
\end{equation} where $\theta \in (w, y_{1})$.  Since $b$ is natural, in view of Table 7.1 on p.~250 of \cite{KarlinT81}, 
  we have \begin{equation}
\label{eq:xi_limit_b}
\lim_{x\to b} (\xi(x) - \xi(y_{0}) ) = \int_{y_{0}}^{b} M[a, v] dS(v) \ge  \int_{y_{0}}^{b} M[y_{0}, v] dS(v) =\infty   \quad \forall y_{0}\in \I. 
\end{equation} 
This implies that  \begin{equation}
\label{e:x/xi(x)-b-limit}
\lim_{x\to b}  \frac{x-w}{\xi(x) - \xi(w)} = 0
\end{equation} if $b < \infty$. For the case when $b=\infty$, the limit in \eqref{e:x/xi(x)-b-limit} is still true thanks to L'H\^opital's Rule and \eqref{e-sM-infty}. Therefore,  it follows from  \eqref{eq:xi_limit_b} and  \eqref{e:x/xi(x)-b-limit}
 that \begin{align} \label{e2-prop32-pf}
\lim_{x\to b} f(x) & = \lim_{x\to b} (\xi(x) - \xi(w)) \left[z-  \frac{x-w}{\xi(x) - \xi(w)} \right]= \infty. 
\end{align}
     In view of \eqref{e1-prop32-pf} and \eqref{e2-prop32-pf}, we conclude that there exists a $y > y_{1}$ so that $f(y) = 0$ or $\frac{y-w}{\xi(y) - \xi(w)} = z$. 
      
      (iii) The assertion that $\frac{y-w}{\xi(y) - \xi(w)} \le z_{0}$ for any  $  (w,  y) \in \cR$ is obvious thanks to the mean value theorem and the definition of $z_{0}$. 
      
      (iv) We now prove assertion (iv). Put $\ell : = \frac{1}{\xi'}$.  Suppose there exists a pair $  (w,  y) \in \cR$ with $\frac{y-w}{\xi(y) - \xi(w)}= z_{0}$, then there exists a $\theta \in (w, y)$ so that $\ell (\theta) = \frac1{\xi'(\theta)} = z_{0}$, 
        In other words, $\theta$ is an extreme point for the function $\ell$ and hence $\ell'(\theta) = 0$, where \begin{equation}
\label{e:ell'-expression}
\ell'(x) = \frac{m(x)}{M[a, x]} (\mu(x) - \ell(x)) = \frac{m(x)}{M^{2}[a, x]}  \int_{a}^{x}(\mu(x) -\mu(y)) dM(y) =0.
\end{equation} Since $\mu$ is strictly increasing  on $(a, \hat x_{\mu,c})$, we have $\theta > \hat x_{\mu,c}$ and there exists  a subinterval $[x_{1}, x_{2}] \subset [\hat x_{\mu,c},\theta]$ on which $\mu$ is strictly decreasing. Without loss of generality, we can pick $x_{1}$ so that $$x_{1}: = \min\{x\ge \hat x_{\mu,c}: \mu \text{ is strictly decreasing on }[x, y] \text{ for some } y > x\}.$$ In particular, $x_{1} < \theta< y$.  Since $\mu$ is also concave on $[x_{1}, b) \subset (\hat x_{\mu,c}, b)$, it follows that $\mu$ is strictly decreasing on $[x_{1}, b)$. 
      
      On the other hand, for the function $f(\cdot) = f(\cdot; w, z_{0})$ defined in \eqref{e-sec2-f-defn}, we have $f(w) =f(y) =0$ and $f'(x) = z_{0} \xi'(x) -1 \ge 0$ for all $x\ge w$.  Therefore $f$ is constant and thus $f'(x) =0$ on $[w, y]$. This further implies that $\ell(x) = \frac1{\xi'(x)} = z_{0}$ and hence $\ell'(x) =0$ for all $x\in (w, y)$. In view of the first expression of $\ell'$ given in \eqref{e:ell'-expression}, we have $\mu(x) = \ell(x) = z_{0}$ for all $x\in (w, y)$. This is a contradiction to the conclusion that $\mu$ is strictly decreasing on $[x_{1}, b)$ since $(w\vee x_{1}, y) \subset (w, y) \cap [x_{1}, b)$. The proof is complete. 
  \end{proof} 

\begin{proof}[Proof of \eqref{eq:reward_equiv}]
  For any $R\in \A$ and $Q\in \A_0$, let $\{t_j\}$ be such that $\lim_{j\to\infty} t_j =\infty$ and \begin{align*}
  \hat J(R, Q) : = \lim_{j\to\infty}[\lan c, \mu_{0,t_j}^R\ran + \lan \vphi(\kappa_{t_j}^Q) B\id - K, \mu^R_{1, t_j}\ran].
\end{align*} Take a further subsequence $\{t_{j_k}\}$ of $\{t_j\}$ so that $\lim_{k\to\infty} t_{j_k} =\infty$ and \begin{align*}
  \liminf_{j\to\infty} [\lan c, \mu_{0,t_j}^R\ran + \lan \vphi(\kappa^Q) B\id - K, \mu^R_{1, t_j}\ran] = \lim_{k\to\infty} [\lan c, \mu_{0,t_{j_k}}^R\ran + \lan \vphi(\kappa^Q) B\id - K, \mu^R_{1, t_{j_k}}\ran]
\end{align*} Then  we have \begin{align*}
   J& (R, \kappa^Q)- \hat J(R, Q)\\ & \le \liminf_{j\to\infty} [\lan c, \mu_{0,t_j}^R\ran + \lan \vphi(\kappa^Q) B\id - K, \mu^R_{1, t_j}\ran] - \lim_{j\to\infty}[\lan c, \mu_{0,t_j}^R\ran + \lan \vphi(\kappa_{t_j}^Q) B\id - K, \mu^R_{1, t_j}\ran] \\ 
  & = \lim_{k\to\infty} [(\lan c, \mu_{0,t_{j_k}}^R\ran + \lan \vphi(\kappa^Q) B\id - K, \mu^R_{1, t_{j_k}}\ran) - (\lan c, \mu_{0,t_{j_k}}^R\ran + \lan \vphi(\kappa^Q_{t_{j_k}}) B\id - K, \mu^R_{1, t_{j_k}}\ran )] \\
   & =\lim_{k\to\infty}  [\vphi(\kappa^Q) - \vphi(\kappa_{t_{j_k}}^Q)]\lan  B\id, \mu^R_{1, t_{j_k}}\ran \\ 
  & \le \limsup_{k\to\infty} |\vphi(\kappa^Q) - \vphi(\kappa_{t_{j_k}}^Q)|\cdot \limsup_{k\to\infty} \lan B\id, \mu^R_{1, t_{j_k}}\ran\\ &  \le 0\cdot z_0 =0, 
\end{align*}  where the last inequality follows from the continuity of $\vphi$ and Lemma \ref{lem2-transversality}. 

On the other hand, we can take a sequence $\{t_i\}$ such that $\lim_{i\to\infty} t_i =\infty$ and \begin{align*}
  J(R, \kappa^Q) = \lim_{i\to\infty} [\lan c, \mu_{0,t_i}^R\ran + \lan \vphi(\kappa^Q) B\id - K, \mu^R_{1, t_i}\ran].
\end{align*} Let $\{t_{i_l}\}$ be a further subsequence of $\{t_l\}$ with $\lim_{l\to\infty} t_{i_l} =\infty$ and \begin{align*}
  \liminf_{i\to\infty} [\lan c, \mu_{0,t_i}^R\ran + \lan \vphi(\kappa_{t_i}^Q) B\id - K, \mu^R_{1, t_i}\ran] = \lim_{l\to\infty} [\lan c, \mu_{0,t_{i_l}}^R\ran + \lan \vphi(\kappa_{t_{i_l}}^Q) B\id - K, \mu^R_{1, t_{i_l}}\ran].
\end{align*} Also note that $0\le \liminf_{t\to\infty} \lan B\id, \mu_{1, t}^R\ran \le \limsup_{t\to\infty} \lan B\id, \mu_{1, t}^R\ran \le z_0$.  Then \begin{align*}
   J& (R, \kappa^Q)- \hat J(R, Q)\\ 
  & \ge \lim_{i\to\infty} (\lan c, \mu_{0,t_i}^R\ran + \lan \vphi(\kappa^Q) B\id - K, \mu^R_{1, t_i}\ran) -  \liminf_{i\to\infty} [\lan c, \mu_{0,t_i}^R\ran + \lan \vphi(\kappa_{t_i}^Q) B\id - K, \mu^R_{1, t_i}\ran] \\ 
  & = \lim_{l\to\infty} [(\lan c, \mu_{0,t_{i_l}}^R\ran + \lan \vphi(\kappa^Q) B\id - K, \mu^R_{1, t_{i_l}}\ran)- (\lan c, \mu_{0,t_{i_l}}^R\ran + \lan \vphi(\kappa_{t_{i_l}}^Q) B\id - K, \mu^R_{1, t_{i_l}}\ran)]\\ 
  & = \lim_{l\to\infty} [(\vphi(\kappa^Q) - \vphi(\kappa_{t_{i_l}}^Q))\lan  B\id, \mu^R_{1, t_{i_l}}\ran ]\\ 
  & \ge -\limsup_{l\to\infty} |\vphi(\kappa^Q) - \vphi(\kappa_{t_{i_l}}^Q)|\cdot \limsup_{l\to\infty} \lan B\id, \mu^R_{1, t_{i_l}}\ran \ge -0\cdot z_0 =0.
\end{align*}  Thus, we have $J(R, \kappa^Q) = \hat J(R, Q)$ for any $R\in \A$ and $Q\in \A_0$ as claimed in \eqref{eq:reward_equiv}.\end{proof}
  
\begin{proof}[Proof of Lemma \ref{lem-h-new}] Assertion (i) follows from Lemma 4.3 of \cite{HelmSZ:26}. To see assertion (ii), we first note that Conditions \ref{c-cond}(a) and  \ref{3.9-suff-cnd} imply that the function $r_p$ is strictly increasing on $(a,\wdh x_{\mu, c} )$; this together with the expression for $h'_{p}$ in \eqref{e:h'-expression} implies that $h_{p}$ is strictly increasing on $(a,  \wdh x_{\mu, c} )$. On the other hand, Proposition 4.2 of  \cite{HelmSZ:26} indicates that under Conditions \ref{diff-cnd}, \ref{c-cond}(a), and \ref{cond-interior-max}, the function $F_p$ of \eqref{e:F_K} has an interior maximizer $(w^*_p,y_p^*)$ for which   \eqref{e-1st-order-condition} or \eqref{e2-1st-order-condition} holds. This, together with Assertion (i), implies that implies that there exists an $ x > \wdh x_{\mu,c}$ so that $h_{p}'(x) =0$. Furthermore, we can define $y_{p} : = \min\{x \in \I: h_{p}'(x) =0\}$ as in \eqref{e-y-hat-p-defn}; note that $y_{p}> \wdh x_{\mu,c}$. Using  \eqref{e:h'-expression} again, there must exist a subinterval $[ \wdt x_p, \wdt w_p] \subset (\hat x_{\mu,c},  y_{p}) $ on which the function $r_{p}$ is strictly decreasing.  If there are multiple such intervals, we select  the one with the smallest $ \wdt x_{p}$.
 Furthermore, we can pick $ \wdh x_{p} \in [\hat x_{\mu,c},  \wdt x_{p}]$ so that  $r_{p}$ is strictly increasing on $(a, \wdh x_{p})$, constant on $[\wdh x_{p}, \wdt x_{p}]$,  and decreasing on $[ \wdt x_{p}, w_{p}]$. The concavity of  $r_{p}$ on  $( \hat x_{\mu, c}, b)$ actually implies that it is decreasing  on $(\wdh x_{p}, b)$ and strictly decreasing on  $(\wdt x_{p}, b)$. Then using the same calculations as those in the proof of Lemma 4.7 of \cite{HelmSZ:26}, we can show that $h'_p(x) < 0 $ for all $x> \wdh y_p$ and hence  $h_{p}$ is strictly increasing on $(a, y_p)$ and  decreasing on $(y_{p}, b)$. The proof is complete. 
 \end{proof} 
  
 \comment{\subsection{Preliminaries}\label{sect-preliminaries}

Recall the function $\mathfrak{z}$ defined in \eqref{eq-kappa-Q} and the constant $z_{0}$ defined in \eqref{eq:z0defn}. The following proposition  characterizes the range of the function $\mathfrak z$ on the set $\cR$. This, together with Proposition \ref{prop-Rwy}(ii),  shows that $z_0$ is an upper bound on the supply rate of any $(w,y)$-policy. Moreover, it shows that for each $z \in (0,z_0)$,  there exists a $(w,y)$-policy whose supply rate is exactly $z$.

\begin{prop}  \label{lem-z=Bid/Bxi} Assume Condition \ref{diff-cnd} holds.  
Then
\begin{itemize} 
\item[{\em(i)}]  for any $z\in (0, z_{0})$, there exists a pair     $  (w,  y) \in \cR$ so that $\mathfrak z(w,y) = z$; 
  \item[{\em(ii)}]  on the other hand, $\mathfrak z(w,y) \le z_{0}$ for every  $  (w,  y) \in \cR$; and  
  \item[{\em(iii)}]  if Condition~\ref{3.9-suff-cnd} also holds,  then $\mathfrak z(w,y) <  z_{0}$ for every  $  (w,  y) \in \cR$.  
\end{itemize} 
\end{prop}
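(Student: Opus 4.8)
The plan is to reduce all three assertions to the single identity that, for $(w,y)\in\cR$, the mean value theorem gives
\[
\frac{1}{\mathfrak z(w,y)}=\frac{\xi(y)-\xi(w)}{y-w}=\frac{1}{y-w}\int_w^y\xi'(u)\,du ,
\]
so that $1/\mathfrak z(w,y)$ is the average over $[w,y]$ of the continuous, strictly positive function $\xi'=s\,M[a,\cdot]$. Since by \eqref{eq:z0defn} we have $z_0=\sup_{x\in\I}1/\xi'(x)=1/\inf_{x\in\I}\xi'(x)$, assertion (ii) is immediate: the average of $\xi'$ over any nondegenerate subinterval is at least $\inf_{\I}\xi'=1/z_0$, whence $\mathfrak z(w,y)\le z_0$.

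For (i) I would fix $z\in(0,z_0)$ and set $\zeta:=1/z$, noting $\zeta>1/z_0=\inf_\I\xi'$, so there is a point $x_1\in\I$ with $\xi'(x_1)<\zeta$. Consider the continuous function $\Phi(y):=\mathfrak z(x_1,y)$ on $(x_1,b)$. As $y\downarrow x_1$ the difference quotient $(\xi(y)-\xi(x_1))/(y-x_1)$ tends to $\xi'(x_1)$, so $\Phi(y)\to 1/\xi'(x_1)>1/\zeta=z$; as $y\uparrow b$ the numerator $y-x_1$ stays bounded while $\xi(y)\to\infty$, so $\Phi(y)\to 0<z$. Here the limit $\xi(y)\to\infty$ is supplied by \eqref{eq:xi_limit_b} when $b<\infty$ and by \eqref{e-sM-infty} together with L'H\^opital's rule when $b=\infty$. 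The intermediate value theorem then produces $y^*\in(x_1,b)$ with $\mathfrak z(x_1,y^*)=z$, and $(x_1,y^*)\in\cR$, proving (i).

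For (iii) I would upgrade the bound in (ii) to a strict one. The averaging identity shows $\mathfrak z(w,y)=z_0$ can occur only if $\xi'\equiv 1/z_0=\inf_\I\xi'$ on all of $[w,y]$, so it suffices to prove that $\xi'$ equals its global minimum only on a set with empty interior. I would obtain this from the strict unimodality of $\ell:=1/\xi'$ established (by the same mechanism as) in Lemma \ref{lem-h-new}. Differentiating $\ell$ and using $A\xi\equiv 1$ (so $\tfrac12\sigma^2\xi''+\mu\xi'=1$) yields
\[
\xi''=\frac{2}{\sigma^2}\bigl(1-\mu\xi'\bigr),\qquad \ell'=\frac{2\ell}{\sigma^2}\bigl(\mu-\ell\bigr),
\]
so every critical point $x_c$ of $\ell$ satisfies $\mu(x_c)=\ell(x_c)$ and $\ell''(x_c)=\tfrac{2\ell(x_c)}{\sigma^2(x_c)}\mu'(x_c)$. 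On $(a,\hat x_{\mu,c})$ one has $\mu'>0$, so any critical point there is a strict local minimum, while concavity of $\mu$ on $(\hat x_{\mu,c},b)$ pins down the unique interior critical point as a strict maximum; hence $\ell$ is strictly increasing then strictly decreasing, with a single maximizer $x^*$. Consequently $\xi'(u)>1/z_0$ for every $u\ne x^*$, the average of $\xi'$ over any nondegenerate $[w,y]$ strictly exceeds $1/z_0$, and $\mathfrak z(w,y)<z_0$.

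The main obstacle is assertion (iii): concavity of $\mu$ by itself permits a flat piece, and a plateau of $\mu$ sitting exactly at the level where $\xi'$ attains its minimum would make $\mathfrak z=z_0$ attainable. Excluding this is precisely what the strictness in Lemma \ref{lem-h-new} accomplishes, by combining $\mu=\ell$ at critical points with the strict monotonicity of $\mu$ on $(a,\hat x_{\mu,c})$ to forbid a plateau of $\ell$ at its peak. The only other delicate point is the boundary limit $\xi(y)\to\infty$ as $y\to b$ in the finite-$b$ case of (i), which I would quote from the companion-paper estimate \eqref{eq:xi_limit_b}.
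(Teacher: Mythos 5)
First, a point of reference: the paper itself gives no proof of this proposition --- it is imported from the companion paper \cite{HelmSZ:25} --- so your attempt can only be judged on correctness. Your parts (i) and (ii) are correct and cleanly argued: writing $1/\mathfrak z(w,y)$ as the average of $\xi'$ over $[w,y]$ gives (ii) at once, and your intermediate-value argument for (i) is sound, since $\mathfrak z(x_1,y)\to 1/\xi'(x_1)>z$ as $y\downarrow x_1$ and $\mathfrak z(x_1,y)\to 0$ as $y\to b$ (by \eqref{eq:xi_limit_b}, or by \eqref{e-sM-infty} and L'H\^opital when $b=\infty$). Your first-order identity $\ell'=\frac{2\ell}{\sigma^2}(\mu-\ell)$ for $\ell:=1/\xi'$ is also correct and needs no regularity beyond Condition \ref{diff-cnd}.

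Part (iii), however, contains a genuine gap, which you half-concede in your closing paragraph but do not close. Your second-derivative test needs $\mu$ and $\sigma^2$ differentiable (only continuity of $\mu$ is assumed), and, more seriously, on $(\hat x_{\mu,c},b)$ Condition \ref{3.9-suff-cnd} gives only concavity, so at a critical point you get $\ell''(x_c)=\frac{2\ell(x_c)}{\sigma^2(x_c)}\mu'(x_c)\le 0$, never the strict inequality needed to rule out a plateau of $\ell$ at its maximum --- and a plateau is exactly the configuration in which $\mathfrak z(w,y)=z_0$ is attained. Your proposed repair, quoting ``the strictness in Lemma \ref{lem-h-new},'' is not legitimate: that lemma concerns $h_p=(g'+p)/\xi'$ and is stated under Conditions \ref{c-cond}(a) and \ref{cond-interior-max}, neither of which is a hypothesis of part (iii); the function $\ell$ corresponds to the surrogate data $c\equiv 0$, $p=1$, for which Condition \ref{c-cond}(a) fails (it requires $c(a)<c(b)$) and Condition \ref{cond-interior-max} need not hold (e.g.\ when $b-a\le K$). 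What is missing is the no-plateau argument itself, which can be run directly from identities already at your disposal. Suppose $\ell\equiv z_0$ on some $[w,y]$ with $w<y$. Then $\ell'=\frac{m}{M[a,\cdot]}(\mu-\ell)=0$ there forces $\mu\equiv z_0$ on $[w,y]$. Since $\ell'(x)=\frac{m(x)}{M^2[a,x]}\int_a^x[\mu(x)-\mu(u)]\,dM(u)>0$ on $(a,\hat x_{\mu,c})$ (strict monotonicity of $\mu$), the plateau lies in $[\hat x_{\mu,c},b)$. Concavity of $\mu$ on $(\hat x_{\mu,c},b)$, applied to the chord through $w$ and $y$, gives $\mu\le z_0$ on $(\hat x_{\mu,c},w)$, while strict monotonicity gives $\mu<z_0$ on $(a,\hat x_{\mu,c})$. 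But $\ell$ admits the representation $\ell(x)=\int_a^x\mu\,dM\,/\,M[a,x]$ (the case $c\equiv 0$, $p=1$ of the paper's formula for $h_p$; cf.\ \eqref{e:h'-expression} and the computation in Example \ref{exam-logistic}), so $z_0=\ell(w)=\int_a^w\mu\,dM\,/\,M[a,w]<z_0$, a contradiction. Until some argument of this kind is supplied, your (iii) is an assertion resting on a lemma that does not apply, not a proof.
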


We next present  some important observations concerning the  functions $\xi$ and $g$  defined respectively in \eqref{e-xi} and \eqref{e:g-defn}.   
 Both $\xi$ and $g$ are $0$ at $x_0$, negative for $x < x_0$ and positive for $x> x_0$.  
  The functions $\xi$ and $g$ are twice continuously differentiable on $\I$. 
 \comment{  with 
\begin{align}
\label{e:xi-derivatives}\xi'(x) & = s(x) M[a,x],   && \xi''(x) = -\frac{2\mu(x)}{\sigma^{2}(x) } \xi'(x) + s(x) m(x), \\ 
\label{e:g-derivatives}g'(x) & = s(x) \int_{a}^{x} c(y) dM(y),   && g''(x) =  -\frac{2\mu(x)}{\sigma^{2}(x) } g'(x) +s(x) m(x)c(x).
\end{align}}
 Moreover, 
 they  admit stochastic representations.    
 Indeed, under
Conditions \ref{diff-cnd} and \ref{c-cond}(a),  for any  $a < w < y < b$, denoting by $\tau_{y}:=\inf\{ t > 0: X_{0}(t) = y\}$ the first passage time to $y\in\I$ of the process $X_{0}$ of \eqref{e:X0} with initial state $x_0=w$, we have    
\begin{align}\label{e:tau_x-b}
\EE_{w}[\tau_{y}] & = \int_{w}^{y} S[u,y] dM(u) + S[w,y] M[a,w] = B\xi(w,y)= \xi(y)  -\xi(w), 
\end{align}      
 and  
\begin{equation} \label{e:c-tauy-mean}
\EE_{w}\bigg[\int_{0}^{\tau_{y}} c(X_{0}(s)) ds \bigg]= \int_{w}^{y} c(u) S[u, y] dM(u) + S[w, y]\int_{a}^{w} c(u) dM(u) =B g(w, y). 
\end{equation}   }

 \def\cprime{$'$} \def\polhk#1{\setbox0=\hbox{#1}{\ooalign{\hidewidth \lower1.5ex\hbox{`}\hidewidth\crcr\unhbox0}}}

\end{document}